\newtheorem{theorem}{Theorem}[section]
\newtheorem{lemma}[theorem]{Lemma}
\newtheorem{proposition}[theorem]{Proposition}
\newtheorem{corollary}[theorem]{Corollary}
\newtheorem{claim}[theorem]{Claim}
\newtheorem{question}[theorem]{Question}
\newenvironment{definition}[1][Definition]{\begin{trivlist}
\item[\hskip \labelsep {\bfseries #1}]}{\end{trivlist}}
\newenvironment{remark}[1][Remark]{\begin{trivlist}
\item[\hskip \labelsep {\bfseries #1}]}{\end{trivlist}}
\newcommand{\cf}{\mathrm{cf}}
\newcommand{\bb}{\mathbb}
\newcommand{\bbm}{\mathbbm}
\begin{document}
\title{Covering properties and square principles}
\author{Chris Lambie-Hanson}
\address{Einstein Institute of Mathematics, Hebrew University of Jerusalem \\ Jerusalem, 91904, Israel}
\email{clambiehanson@math.huji.ac.il}
\thanks{This research was undertaken while the author was a Lady Davis Postdoctoral Fellow. The author would like to thank the Lady Davis Fellowship Trust and the Hebrew University of Jerusalem. The author would also like to thank Menachem Magidor for many helpful discussions and the anonymous referee for a number of useful comments.}
\begin{abstract}
	Covering matrices were used by Viale in his proof that the Singular Cardinals Hypothesis follows from the Proper Forcing Axiom and later by Sharon and Viale to investigate the impact of stationary reflection on the approachability ideal. In the course of this work, they isolated two reflection principles, $\mathrm{CP}$ and $\mathrm{S}$, which may hold of covering matrices. In this paper, we continue previous work of the author investigating connections between failures of $\mathrm{CP}$ and $\mathrm{S}$ and variations on Jensen's square principle. We prove that, for a regular cardinal $\lambda > \omega_1$, assuming large cardinals, $\square(\lambda, 2)$ is consistent with $\mathrm{CP}(\lambda, \theta)$ for all $\theta$ with $\theta^+ < \lambda$. We demonstrate how to force nice $\theta$-covering matrices for $\lambda$ which fail to satisfy $\mathrm{CP}$ and $\mathrm{S}$. We investigate normal covering matrices, showing that, for a regular uncountable $\kappa$, $\square_\kappa$ implies the existence of a normal $\omega$-covering matrix for $\kappa^+$ but that cardinal arithmetic imposes limits on the existence of a normal $\theta$-covering matrix for $\kappa^+$ when $\theta$ is uncountable. We introduce the notion of a good point for a covering matrix, in analogy with good points in PCF-theoretic scales. We develop the basic theory of these good points and use this to prove some non-existence results about covering matrices. Finally, we investigate certain increasing sequences of functions which arise from covering matrices and from PCF-theoretic considerations and show that a stationary reflection hypothesis places limits on the behavior of these sequences.
\end{abstract}
\maketitle

\section{Introduction}

The study of compactness and incompactness phenomena has long occupied a prominent place in set theory. Instances of incompactness (variations on Jensen's square principle, for example) typically abound in canonical inner models, while large cardinals or forcing axioms typically imply compactness or reflection principles. This sets up a tension between canonical inner models on one hand and large cardinals and forcing axioms on the other, and the investigation of this tension has proven to be quite fruitful in the study of canonical inner models, combinatorial set theory, and cardinal arithmetic (see, for example, \cite{cfm} or \cite{foremanmagidor}). In this paper, continuing in the spirit of \cite{lambiehanson}, we investigate connections between variations on Jensen's square principle and reflection principles related to covering matrices.

Covering matrices play an essential role in Viale's proof in \cite{viale} that the singular cardinals hypothesis (SCH) follows from the P-ideal dichotomy (PID) and also from the mapping reflection property (MRP), both of which are consequences of the proper forcing axiom (PFA). In \cite{vialeguessing}, again using covering matrices, Viale shows that SCH follows from the existence of sufficiently many internally unbounded $\aleph_1$-guessing models. In \cite{sharonviale}, Sharon and Viale use covering matrices to show that instances of approachability follow from hypotheses about stationary reflection. In these works, Sharon and Viale isolate reflection principles, $\mathrm{CP}$ and $\mathrm{S}$, which can hold of covering matrices. Certain instances of these principles follow from well-known set theoretic hypothesis, such as PFA or simultaneous stationary reflection, and in turn imply statements such as SCH or the failure of certain square principles. In \cite{lambiehanson}, the author considers covering matrices which form counterexamples to $\mathrm{CP}$ and $\mathrm{S}$ and their connections with square principles. We continue that work here. We also investigate the extent to which square principles may be compatible with instances of $\mathrm{CP}$ and $\mathrm{S}$ and derive some PCF-theoretic consequences from stationary reflection principles.

The outline of the paper is as follows. In Section \ref{cmsect}, we provide basic definitions and facts concerning covering matrices. In Section \ref{square2sect}, we show that, for a regular cardinal $\lambda$, the covering property $\mathrm{CP}^*(\lambda)$ is consistent with $\square(\lambda, 2)$. This shows that Viale's result from \cite{vialecovering} that $\mathrm{CP}(\lambda, \omega)$ implies the failure of $\square(\lambda)$ is sharp. In Section \ref{forcingsect}, we show how, given regular cardinals $\theta < \lambda$, one can force the existence of a closed, uniform, transitive $\theta$-covering matrix for $\lambda$, $\mathcal{D}$, such that $\mathrm{CP}(\mathcal{D})$ and $\mathrm{S}(\mathcal{D})$ both fail. In Section \ref{normalsect}, we obtain results on the possibility of the existence of normal $\theta$-covering matrices for $\lambda$ when $\theta < \lambda$ are regular cardinals, focusing in particular on the case $\theta = \omega$. We prove that, if $\kappa$ is a regular uncountable cardinal, then $\square_\kappa$ implies the existence of a normal $\omega$-covering matrix for $\kappa^+$. On the other hand, if $\theta < \kappa$ are cardinals with $\theta$ regular and uncountable, then the existence of a normal $\theta$-covering matrix for $\kappa^+$ implies $\kappa^\theta > \kappa$. In Section \ref{goodpointsect}, we develop the notion of a good point in a covering matrix. This is closely related to the notion of a good point in a PCF-theoretic scale. We develop the basic theory of good points and use this to prove non-existence results about certain covering matrices. In Section \ref{clubincreasesect}, we consider certain increasing sequence of functions which arise from PCF-theoretic considerations and show that stationary reflection places restrictions on their possible behavior.

Our notation is for the most part standard. The reference for all undefined notation and terminology is \cite{jech}. If $A$ is a set of ordinals, then $\mathrm{otp}(A)$ denotes the order type of $A$ and $A'$ denotes the set of limit points of $A$, i.e. the set of $\alpha \in A$ such that $\sup(A \cap \alpha) = \alpha$. If $\kappa$ is a cardinal, then $[A]^\kappa = \{X \subseteq A \mid |X| = \kappa \}$. $[A]^{\leq \kappa}$, $[A]^{<\kappa}$, etc. are defined in the obvious way. If $\theta < \lambda$ are cardinals, then $\lambda$ is \emph{$\theta$-inaccessible} if, for all $\kappa < \lambda$, $\kappa^\theta < \lambda$. The class of ordinals is denoted by $\mathrm{On}$. If $\lambda$ is an infinite cardinal and $f,g \in {^\lambda}\mathrm{On}$, then $f <^* g$ if $\{\alpha < \lambda \mid g(\alpha) \leq f(\alpha)\}$ is bounded in $\lambda$. If $\kappa < \lambda$ are cardinals and $\kappa$ is regular, then $S^\lambda_\kappa = \{\alpha < \lambda \mid \cf(\alpha) = \kappa\}$. $S^\lambda_{<\kappa}$, $S^\lambda_{\geq \kappa}$, etc. are defined in the obvious way.

\section{Covering matrices} \label{cmsect}

We first recall the definition of a covering matrix. Our definition matches that given by Sharon and Viale in \cite{sharonviale}. We note that similar notions existed prior to Viale's work. For example, in \cite{foremanmagidor}, Foreman and Magidor consider an object they call a Jensen matrix, which shares some of the basic properties of a covering matrix.

\begin{definition}
	Let $\theta<\lambda$ be regular cardinals. $\mathcal{D}= \langle D(i,\beta) \mid i<\theta, \beta<\lambda \rangle$ is a \emph{$\theta$-covering matrix for $\lambda$} if:
	\begin{enumerate}
		\item{for all $\beta<\lambda$, $\beta = \bigcup_{i<\theta}D(i,\beta)$;}
		\item{for all $\beta<\lambda$ and all $i<j<\theta$, $D(i,\beta) \subseteq D(j, \beta)$;}
		\item{for all $\beta<\gamma<\lambda$ and all $i<\theta$, there is $j<\theta$ such that $D(i,\beta)\subseteq D(j,\gamma)$.}
	\end{enumerate}

	$\beta_{\mathcal{D}}$ is the least $\beta$ such that for all $\gamma<\lambda$ and all $i<\theta$, $\mathrm{otp}(D(i,\gamma))<\beta$. $\mathcal{D}$ is \emph{normal} if $\beta_{\mathcal{D}}<\lambda$.

	$\mathcal{D}$ is \emph{transitive} if, for all $\alpha<\beta<\lambda$ and all $i<\theta$, if $\alpha \in D(i,\beta)$, then $D(i,\alpha)\subseteq D(i,\beta)$.

	$\mathcal{D}$ is \emph{uniform} if for all limit ordinals $\beta<\lambda$ there is $i<\theta$ such that $D(i,\beta)$ contains a club in $\beta$.

	$\mathcal{D}$ is \emph{closed} if for all $\beta<\lambda$, all $i<\theta$, and all $X\in [D(i,\beta)]^{\leq \theta}$, $\sup(X) \in D(i,\beta)$.

	$\mathcal{D}$ is \emph{downward coherent} if, for all $\alpha < \beta < \lambda$ and all $i < \theta$, there is $j < \theta$ such that $D(i,\beta) \cap \alpha \subseteq D(j,\alpha)$.

	$\mathcal{D}$ is \emph{locally downward coherent} if, for all $X \in [\lambda]^{\leq \theta}$, there is $\gamma_X < \lambda$ such that, for all $\beta < \lambda$ and all $i < \theta$, there is $j < \theta$ such that $X \cap D(i,\beta) \subseteq X \cap D(j, \gamma_X)$.

	$\mathcal{D}$ is \emph{strongly locally downward coherent} if, for all $X \in [\lambda]^{\leq \theta}$, there is $\gamma_X < \lambda$ such that, for all $\gamma_X \leq \beta < \lambda$, there is $i < \theta$ such that, for all $i \leq j < \theta$, $X \cap D(j,\beta) = X \cap D(j, \gamma_X)$. 
\end{definition}

We next introduce the reflection principles $\mathrm{CP}$ and $\mathrm{S}$, isolated by Sharon and Viale.

\begin{definition}
	Let $\theta < \lambda$ be regular cardinals, and let $\mathcal{D}$ be a $\theta$-covering matrix for $\lambda$. 
	\begin{enumerate}
		\item{$\mathrm{CP}(\mathcal{D}$) holds if there is an unbounded $T\subseteq \lambda$ such that for every $X \in [T]^\theta$, there are $i<\theta$ and $\beta < \lambda$ such that $X\subseteq D(i,\beta)$ (in this case, we say that $\mathcal{D}$ \emph{covers} $[T]^\theta$).}
		\item{$\mathrm{S}(\mathcal{D}$) holds if there is a stationary $S \subseteq \lambda$ such that, for every family $\{S_j \mid j<\theta\}$ of stationary subsets of $S$, there are $i<\theta$ and $\beta < \lambda$ such that, for every $j<\theta$, $S_j \cap D(i, \beta) \not= \emptyset$.}
		\item{If $\theta < \lambda$ are regular cardinals, then $\mathrm{CP}(\lambda, \theta)$ is the assertion that $\mathrm{CP}(\mathcal{D})$ holds whenever $\mathcal{D}$ is a locally downward coherent $\theta$-covering matrix for $\lambda$. $\mathrm{CP}^*(\lambda)$ is the assertion that $\mathrm{CP}(\lambda, \theta)$ holds for all regular $\theta$ such that $\theta^+ < \lambda$.}
	\end{enumerate}
\end{definition}

\begin{remark}
	$\mathrm{CP}$ stands for ``covering property." If $\lambda$ is a regular uncountable cardinal, then $\mathrm{CP}^*(\lambda)$ is the strongest one can hope for, since Viale shows in \cite{vialecovering} that $\mathrm{CP}(\kappa^+, \kappa)$ fails for every regular $\kappa$.
\end{remark}

The following is the basis of Viale's proof in \cite{viale} that SCH follows from PFA.

\begin{theorem}
	Suppose PFA holds. Then $\mathrm{CP}(\lambda, \omega)$ holds for all regular $\lambda > \omega_2$.
\end{theorem}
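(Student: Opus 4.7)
The plan is to apply the $P$-ideal dichotomy (PID), a well-known combinatorial consequence of PFA. Let $\mathcal{D} = \langle D(i, \beta) \mid i < \omega,\ \beta < \lambda \rangle$ be any locally downward coherent $\omega$-covering matrix for $\lambda$; the goal is to produce an unbounded $T \subseteq \lambda$ witnessing $\mathrm{CP}(\mathcal{D})$. Define $I$ to consist of those countable $X \subseteq \lambda$ that are \emph{covered} by a single cell of $\mathcal{D}$, i.e., $X \subseteq D(i,\beta)$ for some $i<\omega$ and $\beta<\lambda$. Properties~(2) and~(3) of a covering matrix immediately show that $I$ is closed under subsets and finite unions, hence an ideal of countable sets on $\lambda$.

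The key first step is to verify that $I$ is a $P$-ideal, and this is precisely where local downward coherence enters essentially. Given a countable family $\{X_n \mid n<\omega\} \subseteq I$ with witnesses $X_n \subseteq D(i_n,\beta_n)$, let $X = \bigcup_n X_n$ and let $\gamma_X$ witness local downward coherence for $X$. For each $n$ there is some $j_n<\omega$ with $X \cap D(i_n,\beta_n) \subseteq X \cap D(j_n,\gamma_X)$, so in particular $X_n \subseteq D(j_n,\gamma_X)$. A diagonal construction across the fixed column $\gamma_X$ then produces a single $Y \in I$ with $X_n \subseteq^* Y$ for every $n$.

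Now apply PID to $I$. The second alternative would produce a decomposition $\lambda = \bigcup_{n<\omega} B_n$ such that no infinite subset of any $B_n$ lies in $I$; equivalently, $B_n \cap D(i,\beta)$ is finite for every $n$, $i$, and $\beta$. Combined with property~(1) of $\mathcal{D}$, this forces $|B_n \cap \beta| \leq \omega$ for every $n$ and $\beta$, whence $|\beta| \leq \omega$ for every $\beta < \lambda$, contradicting $\lambda > \omega_1$. So the first alternative of PID must hold, yielding an uncountable $T_0 \subseteq \lambda$ with $[T_0]^{\omega} \subseteq I$. Iterating this argument with $I$ restricted to tails $[\alpha,\lambda)$ for cofinally many $\alpha < \lambda$---the second alternative remains impossible, since tails still contain intervals of length $\omega_1$---and amalgamating the resulting uncountable pieces into a single unbounded $T$ completes the construction.

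The main technical obstacle is the $P$-ideal verification, where local downward coherence is used in an essential way. The remainder is a clean application of PID plus cardinality bookkeeping, though some care is needed when amalgamating the iterated pieces into a single unbounded $T$ so that $[T]^{\omega} \subseteq I$ is preserved; this is where one uses the $\sigma$-coherence of covered sets supplied by the $P$-ideal property, together with the fact that a countable subset of $T$ meets only countably many pieces when $\mathrm{cf}(\lambda) \geq \omega_1$.
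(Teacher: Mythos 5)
The paper states this theorem without proof, citing Viale, and Viale's argument is indeed an application of PID, so your overall strategy is the right one. However, you have applied PID to the wrong ideal, and the resulting argument has a fatal gap at the very step you identify as the crux: the ideal $I$ of countable sets covered by a single entry of $\mathcal{D}$ is in general \emph{not} a P-ideal, and local downward coherence does not rescue it. To see this, fix a column $\gamma$ and choose infinite sets $X_n \subseteq D(j_n,\gamma) \setminus D(j_{n-1},\gamma)$ with $j_n$ strictly increasing (such sets exist for any matrix whose columns do not stabilize). Each $X_n \in I$. But if $Y \in I$, say $Y \subseteq D(i,\beta)$, then by (local) downward coherence applied to $X = \bigcup_n X_n$ there is a single $j$ with $Y \cap X \subseteq D(j,\gamma_X)$, and hence $Y$ can contain at most finitely many elements of $X_n$ once $n$ is large; so $X_n \subseteq^* Y$ fails for all large $n$. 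Your ``diagonal construction across the fixed column $\gamma_X$'' cannot work precisely because the indices $j_n$ need not be bounded. Since the P-ideal verification fails, the first alternative of PID is unavailable and the construction of $T$ collapses.

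Viale's actual proof uses the \emph{orthogonal} ideal $I_{\mathcal{D}} = \{X \in [\lambda]^{\leq\omega} : X \cap D(i,\beta) \text{ is finite for all } i,\beta\}$. For this ideal local downward coherence does yield the P-ideal property (given $X_n \in I_{\mathcal{D}}$, set $Y = \bigcup_n (X_n \setminus D(n,\gamma_X))$; finiteness of all $Y \cap D(j,\gamma_X)$ propagates to all entries via $\gamma_X$), and the two PID alternatives play the opposite roles from your plan: the \emph{first} alternative is impossible, since an uncountable $Z$ with $[Z]^{\omega} \subseteq I_{\mathcal{D}}$ would satisfy $|Z \cap \beta| < \omega$ for every $\beta < \lambda$ and hence be countable; the \emph{second} alternative then yields an unbounded piece $T = B_n$ in which every infinite countable set has infinite intersection with some entry, and a final argument (again using $\gamma_X$) upgrades this to ``every countable subset of $T$ is covered.'' Note that this route produces an unbounded $T$ in one step, so the tail-iteration and amalgamation at the end of your proposal -- which is independently problematic, since a countable subset of $\bigcup_\alpha T_\alpha$ meeting infinitely many pieces need not be covered even if each $[T_\alpha]^\omega \subseteq I$ -- is not needed.
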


\begin{definition}
	Let $\theta < \lambda$ be regular cardinals. $\mathrm{R}(\lambda, \theta$) is the statement that there is a stationary $S\subseteq \lambda$ such that, for every family $\{S_j \mid j<\theta \}$ of stationary subsets of $S$, there is $\alpha < \lambda$ of uncountable cofinality such that, for all $j<\theta$, $S_j$ reflects at $\alpha$ (i.e. $S_j \cap \alpha$ is stationary in $\alpha$).
\end{definition}

If $\mathcal{D}$ is a nice enough covering matrix, then $\mathrm{CP}(\mathcal{D}$) and $\mathrm{S}(\mathcal{D}$) are equivalent and $\mathrm{R}(\lambda, \theta$) implies both. The following is proved in \cite{sharonviale}:

\begin{lemma} \label{implicationlemma}
	Let $\theta < \lambda$ be regular cardinals, and let $\mathcal{D}$ be a $\theta$-covering matrix for $\lambda$.
	\begin{enumerate}
 		\item{If $\mathcal{D}$ is transitive, then $\mathrm{S}(\mathcal{D}$) implies $\mathrm{CP}(\mathcal{D}$).}
 		\item{If $\mathcal{D}$ is closed, then $\mathrm{CP}(\mathcal{D}$) implies $\mathrm{S}(\mathcal{D}$).}
 		\item{If $\mathcal{D}$ is uniform, then $\mathrm{R}(\lambda, \theta$) implies $\mathrm{S}(\mathcal{D}$).}
	\end{enumerate}
\end{lemma}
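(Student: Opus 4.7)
Part (3) is the most direct: I claim the same stationary $S$ witnessing $\mathrm{R}(\lambda,\theta)$ also witnesses $\mathrm{S}(\mathcal{D})$. Given stationary $\{S_j : j < \theta\} \subseteq S$, the principle $\mathrm{R}(\lambda, \theta)$ produces $\alpha < \lambda$ of uncountable cofinality at which every $S_j$ reflects simultaneously. Since $\mathcal{D}$ is uniform, choose $i < \theta$ and a club $C \subseteq \alpha$ with $C \subseteq D(i, \alpha)$. Each stationary $S_j \cap \alpha$ meets $C$, so $S_j \cap D(i, \alpha) \neq \emptyset$, and $(i, \alpha)$ witnesses the required instance of $\mathrm{S}(\mathcal{D})$.

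For part (2), let $T$ witness $\mathrm{CP}(\mathcal{D})$ and take $S$ to be the set of $\alpha \in S^\lambda_\theta$ that are limit points of $T$ in $\lambda$; this is stationary, since the limit points of $T$ form a club in $\lambda$ and $S^\lambda_\theta$ is stationary. For any family $\{S_j : j < \theta\}$ of stationary subsets of $S$, pick $\alpha_j \in S_j$. Each $\alpha_j$ has cofinality $\theta$ and $T \cap \alpha_j$ is cofinal in $\alpha_j$, so choose cofinal $\theta$-sequences $\langle \gamma^k_j : k < \theta \rangle$ in $T \cap \alpha_j$. The set $X = \{\gamma^k_j : j, k < \theta\}$ lies in $T$ and has cardinality at most $\theta$; padding $X$ with extra elements of $T$ to have size exactly $\theta$ and applying $\mathrm{CP}(\mathcal{D})$ yields $i < \theta$ and $\beta < \lambda$ with $X \subseteq D(i, \beta)$. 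Closedness of $\mathcal{D}$ then forces $\alpha_j = \sup_k \gamma^k_j \in D(i, \beta)$ for every $j$, so $(i, \beta)$ witnesses $\mathrm{S}(\mathcal{D})$ for the given family.

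Part (1) is the most subtle. Let $S$ witness $\mathrm{S}(\mathcal{D})$. The first step is a pigeonhole argument to obtain a stationary $T \subseteq S$ and a single index $i^* < \theta$ such that, for every $\alpha \in T$, the set $R_\alpha := \{\gamma \in S : \gamma > \alpha \text{ and } \alpha \in D(i^*, \gamma)\}$ is stationary in $\lambda$. For each $\alpha \in S$, condition (1) of covering matrices gives $\bigcup_{i < \theta}\{\gamma \in S \setminus (\alpha+1) : \alpha \in D(i, \gamma)\} = S \setminus (\alpha+1)$, which is stationary, so $\lambda$-completeness of the nonstationary ideal on $\lambda$ (using $\theta < \lambda$) yields a least $i^*_\alpha < \theta$ for which $\{\gamma \in S : \gamma > \alpha, \alpha \in D(i^*_\alpha, \gamma)\}$ is stationary. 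A second application of $\lambda$-completeness, partitioning $S$ according to the value of $i^*_\alpha$, produces a stationary $T \subseteq S$ on which $i^*_\alpha$ is constantly some $i^*$. Now, given any $X = \{\alpha_j : j < \theta\} \in [T]^\theta$, apply $\mathrm{S}(\mathcal{D})$ to the family $\{R_{\alpha_j} : j < \theta\}$ to obtain $i' < \theta$ and $\beta < \lambda$ with $R_{\alpha_j} \cap D(i', \beta) \neq \emptyset$ for every $j$. Choose $\gamma_j$ in this intersection and set $k = \max(i^*, i')$. Then $\gamma_j \in D(k, \beta)$ by condition (2), transitivity yields $D(k, \gamma_j) \subseteq D(k, \beta)$, and $\alpha_j \in D(i^*, \gamma_j) \subseteq D(k, \gamma_j) \subseteq D(k, \beta)$. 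Hence $X \subseteq D(k, \beta)$, showing that $T$ witnesses $\mathrm{CP}(\mathcal{D})$.

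The main obstacle is precisely the uniformization performed at the start of (1): without a single index $i^*$ working simultaneously for every $\alpha$ in some stationary set, the local indices $i^*_\alpha$ produced pointwise could be cofinal in $\theta$, and condition (2) of $\mathcal{D}$ would then fail to amalgamate the local covers into a single cover, defeating the final application of transitivity. The crucial ingredient making this uniformization possible is the assumption $\theta < \lambda$, which grants the pigeonhole.
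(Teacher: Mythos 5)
Your proof is correct in all three parts; note that the paper itself does not prove this lemma but simply quotes it from Sharon and Viale, so there is no internal proof to compare against. Your arguments --- the reflection-point-plus-club argument for (3), the closure-under-suprema argument for (2), and in particular the two-step pigeonhole uniformization to a single index $i^*$ on a stationary $T \subseteq S$ before applying $\mathrm{S}(\mathcal{D})$ and transitivity in (1) --- are the standard ones and check out.
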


Under some cardinal arithmetic, local downward coherence is not an additional assumption.

\begin{proposition} \label{downwardcoherent}
	Suppose $\theta < \lambda$ are regular cardinals, $2^\theta < \lambda$, and $\mathcal{D} = \langle D(i,\beta) \mid i < \theta, \beta < \lambda \rangle$ is a $\theta$-covering matrix for $\lambda$. Then the following hold:
	\begin{enumerate}
		\item{$\mathcal{D}$ is locally downward coherent;}
		\item{if $\mathcal{D}$ is transitive, then $\mathcal{D}$ is strongly locally downward coherent.}
	\end{enumerate}
\end{proposition}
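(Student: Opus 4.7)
The plan is to treat the two parts separately, in both cases exploiting $2^\theta < \lambda$ to bound the combinatorial complexity of $\mathcal{D}$ restricted to $X$. For part (1), I would consider the family of traces $\mathcal{F} = \{X \cap D(i,\beta) \mid i < \theta, \beta < \lambda\} \subseteq \mathcal{P}(X)$, so that $|\mathcal{F}| \leq 2^{|X|} \leq 2^\theta < \lambda$. For each $Y \in \mathcal{F}$ pick a witness $(i_Y, \beta_Y)$ with $X \cap D(i_Y, \beta_Y) = Y$ and set $\gamma_X = \sup\{\beta_Y + 1 \mid Y \in \mathcal{F}\}$, which lies below $\lambda$ by regularity. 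Given any $\beta < \lambda$ and $i < \theta$, put $Y = X \cap D(i,\beta)$; property (3) of $\mathcal{D}$ applied to $\beta_Y < \gamma_X$ yields some $j < \theta$ with $D(i_Y, \beta_Y) \subseteq D(j, \gamma_X)$, and intersecting with $X$ gives the desired inclusion $X \cap D(i,\beta) \subseteq X \cap D(j, \gamma_X)$.

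For part (2), assume $\mathcal{D}$ is transitive. For each $\beta \in (\sup X, \lambda)$ I would define $\Phi(\beta) \in {^X}\theta$ by $\Phi(\beta)(x) = \min\{k < \theta \mid x \in D(k,\beta)\}$, which exists by property (1). The key consequence of transitivity is that for $\sup X < \gamma < \beta$, if $c = \min\{k < \theta \mid \gamma \in D(k,\beta)\}$, then $\gamma \in D(k,\beta)$ for all $k \geq c$, so $D(k,\gamma) \subseteq D(k,\beta)$ and therefore
\[
\Phi(\beta)(x) \leq \max\bigl(\Phi(\gamma)(x), c\bigr) \text{ for every } x \in X.
\]
Since $|{^X}\theta| \leq \theta^\theta = 2^\theta < \lambda$ and $\lambda$ is regular, some value $\rho$ is attained by $\Phi$ on an unbounded set $B \subseteq (\sup X, \lambda)$, and I take $\gamma_X$ to be any element of $B$.

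The finishing move, and the main subtlety, is to sandwich $\Phi(\beta)$ against $\rho$ for every $\beta \geq \gamma_X$. Applying the displayed inequality to the pair $(\gamma_X, \beta)$ yields some $c_1 < \theta$ with $\Phi(\beta)(x) \leq \max(\rho(x), c_1)$; for the reverse direction I use unboundedness of $B$ to pick $\beta' \in B$ with $\beta' \geq \beta$ and apply the inequality to $(\beta, \beta')$, obtaining $c_2 < \theta$ with $\rho(x) = \Phi(\beta')(x) \leq \max(\Phi(\beta)(x), c_2)$. Setting $i = \max(c_1, c_2)$, a short case analysis shows that for each $x \in X$ either $\Phi(\beta)(x) = \rho(x)$, or both values are at most $i$; in either situation the conditions $\Phi(\beta)(x) \leq j$ and $\rho(x) \leq j$ are equivalent for every $j \geq i$, which unfolds to $X \cap D(j,\beta) = X \cap D(j,\gamma_X)$. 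The essential point, and where the argument for (2) goes beyond that for (1), is that a one-sided pigeonhole giving only $\rho$ is not enough: one needs the unboundedness of $B$ above $\beta$ to trap $\Phi(\beta)$ from below by $\rho$ as well as from above.
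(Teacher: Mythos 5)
Your proof is correct; it differs from the paper's mainly in bookkeeping. The paper handles both parts with a single device: since there are at most $(2^\theta)^\theta = 2^\theta < \lambda$ possible functions $i \mapsto X \cap D(i,\beta)$, this assignment is constant in $\beta$ on an unbounded $T \subseteq \lambda$, and $\gamma_X = \min(T)$ serves for both parts --- for (1) one pushes $D(i,\beta)$ into the column at $\min(T \setminus \beta)$ via clause (3) of the covering-matrix definition and then transfers down to $\gamma_X$ by constancy, and for (2) one sandwiches $D(j,\gamma_X) \subseteq D(j,\beta) \subseteq D(j,\beta')$ for a suitable $\beta' \in T$ using transitivity twice. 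Your part (1) is a slightly more economical variant: rather than extracting a homogeneous unbounded set, you place $\gamma_X$ above one witness for each of the at most $2^\theta$ traces and apply clause (3) once. Your part (2) is the paper's argument in different notation: the function $\Phi(\beta)$ carries exactly the same information as the trace function $i \mapsto X \cap D(i,\beta)$, your unbounded set $B$ plays the role of the paper's $T$, and the two applications of your displayed inequality (to the pairs $(\gamma_X,\beta)$ and $(\beta,\beta')$) are precisely the two applications of transitivity in the paper's sandwich. The only point worth noting is that your witnesses $\gamma_X$ for the two parts are different ordinals; this is harmless, since each coherence property quantifies its own $\gamma_X$.
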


\begin{proof}
	Fix $X \in [\lambda]^{\leq \theta}$. For each $\beta < \lambda$, define $g_\beta : \theta \rightarrow \mathcal{P}(X)$ by letting $g_\beta(i) = X \cap D(i,\beta)$ for all $i < \theta$. Since $2^\theta < \lambda$, there is a fixed $g : \theta \rightarrow \mathcal{P}(X)$ and an unbounded $T \subseteq \lambda$ such that, for all $\beta \in T$, $g_\beta = g$. Let $\gamma_X = \min(T)$. We claim that this works. To see this, fix $\beta < \lambda$ and $i < \theta$. We must produce a $j < \theta$ such that $X \cap D(i,\beta) \subseteq D(j,\gamma_X)$. Let $\beta' = \min(T \setminus \beta)$. Fix $j < \theta$ such that $D(i,\beta) \subseteq D(j,\beta')$. Then, since $\gamma_X, \beta' \in T$, $X \cap D(i,\beta) \subseteq X \cap D(j,\beta') = X \cap D(j, \gamma_X)$. This proves (1).

	To see (2), assume that $\mathcal{D}$ is transitive. Define $\langle g_\beta \mid \beta < \lambda \rangle$, $T$, and $\gamma_X$ as in the proof of (1). Fix $\beta$ with $\gamma_X < \beta < \lambda$. We must find $i < \theta$ such that for all $i \leq j < \theta$, $X \cap D(j,\beta) = X \cap D(j, \gamma_X)$. Let $\beta' = \min(T \setminus (\beta + 1))$, and let $i < \theta$ be such that $\beta \in D(i,\beta')$ and $\gamma_X \in D(i,\beta)$. Then, for all $i \leq j < \theta$, since $\mathcal{D}$ is transitive, we have $D(j,\gamma_X) \subseteq D(j, \beta) \subseteq D(j, \beta')$, so $X \cap D(j, \gamma_X) \subseteq X \cap D(j, \beta) \subseteq X \cap D(j, \beta') = X \cap D(j, \gamma_X)$, so we have equality.
\end{proof}

\section{Weak square and the covering property} \label{square2sect}

We recall here the definitions of certain square principles.

\begin{definition}
	Suppose $\lambda$ is a regular uncountable cardinal and $2 \leq \kappa \leq \lambda$. A $\square(\lambda, < \kappa)$-sequence is a sequence $\vec{\mathcal{C}} = \langle \mathcal{C}_\alpha \mid \alpha < \lambda \rangle$ such that:
	\begin{enumerate}
		\item{for all limit $\alpha < \lambda$, $1 \leq |\mathcal{C}_\alpha| < \kappa$;}
		\item{for all limit $\alpha < \lambda$ and $C \in \mathcal{C}_\alpha$, $C$ is club in $\alpha$;}
		\item{for all limit $\alpha < \beta < \lambda$ and all $C \in \mathcal{C}_\beta$, if $\alpha \in C'$, then $C \cap \alpha \in \mathcal{C}_\alpha$;}
		\item{there is no club $D \subseteq \lambda$ such that, for all $\alpha \in D'$, $D \cap \alpha \in \mathcal{C}_\alpha$.}
	\end{enumerate}
	$\square(\lambda, < \kappa)$ is the statement that there is a $\square(\lambda, < \kappa)$-sequence. $\square(\lambda, < \kappa^+)$ is typically written as $\square(\lambda, \kappa)$, and $\square(\lambda, 1)$ is written as $\square(\lambda)$.
\end{definition}

\begin{definition}
	Suppose $\mu$ is an infinite cardinal and $2 \leq \kappa \leq \mu^+$. A $\square_{\mu, < \kappa}$-sequence is a $\square(\mu^+, < \kappa)$ sequence $\vec{\mathcal{C}} = \langle \mathcal{C}_\alpha \mid \alpha < \mu^+ \rangle$ such that, for all $\alpha < \mu^+$ and all $C \in \mathcal{C}_\alpha$, $\mathrm{otp}(C) \leq \mu$. Again, $\square_{\mu, < \kappa}$ is the statement that there is a $\square_{\mu, < \kappa}$-sequence, $\square_{\mu, < \kappa^+}$ is written as $\square_{\mu, \kappa}$, and $\square_{\mu, 1}$ is written as $\square_\mu$.
\end{definition}

In \cite{vialecovering}, Viale shows that, if $\lambda$ is a regular uncountable cardinal, then $\mathrm{CP}(\lambda, \omega)$ implies the failure of $\square(\lambda)$. In this section, we show that this result is sharp. In particular, we prove that, assuming large cardinals, $\mathrm{CP}^*(\lambda)$ is consistent with $\square(\lambda, 2)$ and, if $\lambda = \kappa^+$, with $\square_{\kappa, 2}$. Note that PID and MRP, the principles from which $\mathrm{CP}(\lambda, \omega)$ is derived in \cite{viale}, both imply the failure of weaker square principles (see \cite{raghavan} and \cite{strullu}). 

We start with a technical lemma. In what follows, if $\kappa$ is a regular cardinal and $\bb{P}$ is a forcing poset, we say that $\bb{P}$ 
is \emph{$\kappa$-distributive} if every intersection of fewer than $\kappa$ dense open subsets of $\bb{P}$ is dense in $\bb{P}$. 
Equivalently, forcing with $\bb{P}$ adds no new sequences of length less than $\kappa$ of elements from the ground model.

\begin{lemma} \label{preservationlemma}
	Suppose $\theta < \lambda$ are infinite, regular cardinals and $\mathcal{D} = \langle D(i,\beta) \mid i < \theta, \beta < \lambda \rangle$ is a locally downward coherent $\theta$-covering matrix for $\lambda$. Suppose $\bb{Q}$ is a forcing poset such that $\bb{Q} \times \bb{Q}$ is $\theta^+$-distributive and, in $V^{\bb{Q}}$, there is a sequence of sets $\langle T_i \mid i < \theta \rangle$ satisfying the following:
	\begin{enumerate}
		\item{$\bigcup_{i<\theta} T_i = \lambda$;}
		\item{for all $i < j < \theta$, $T_i \subseteq T_j$;}
		\item{for all $i < \theta$ and $\beta < \lambda$, there is $j < \theta$ such that $D(i, \beta) \subseteq T_j$;}
		\item{for all $X \in [\lambda]^{\leq \theta}$ and all $i < \theta$, there is $j < \theta$ such that $X \cap T_i \subseteq D(j, \gamma_X)$.}
	\end{enumerate}
	Then $\mathrm{CP}(\mathcal{D})$ holds in $V$.
\end{lemma}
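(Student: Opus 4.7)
The plan is to proceed by contradiction: assume $\mathrm{CP}(\mathcal{D})$ fails in $V$ and derive a contradiction from the supposed existence of $\bb{Q}$ and $\langle T_i \rangle$.

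The first step is to show that $\mathrm{CP}(\mathcal{D})$ holds in $V^{\bb{Q}}$. Since $\bb{Q}$ is $\theta^+$-distributive and $\lambda>\theta$ is regular in $V$, we still have $\cf^{V^{\bb{Q}}}(\lambda)>\theta$. Because $\langle T_i \mid i<\theta\rangle$ is $\subseteq$-increasing with union $\lambda$, some $T_{i^*}$ must be unbounded in $\lambda$ in $V^{\bb{Q}}$, and reindexing by $i\mapsto i+i^*$ (which clearly preserves (1)-(4)) lets us assume $T_0$ itself is unbounded. For any $X\in[T_0]^\theta$ in $V^{\bb{Q}}$, $X\in V$ by $\theta^+$-distributivity, so $\gamma_X$ is defined; property (4) with $i=0$ gives $j<\theta$ with $X = X\cap T_0 \subseteq D(j,\gamma_X)$, so $T_0$ witnesses $\mathrm{CP}(\mathcal{D})$ in $V^{\bb{Q}}$.

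The main task is then to reflect this witness to $V$. Let $q_0\in\bb{Q}$ be a condition forcing the hypotheses together with the unboundedness of $\dot T_0$, and define
\[
T = \{\alpha<\lambda : q_0 \Vdash_{\bb{Q}} \alpha \in \dot T_0\} \in V.
\]
For any $X\in[T]^\theta$ in $V$, forcing with $\bb{Q}$ below $q_0$ places $X\subseteq T_0$, so property (4) with $i=0$ produces a single matrix cell $D(j,\gamma_X)$ containing $X$; by absoluteness of coverage, $X$ is already covered in $V$. Hence, if $T$ is unbounded in $\lambda$, then $T$ witnesses $\mathrm{CP}(\mathcal{D})$ in $V$, as required.

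The crux --- and the step I expect to be the main obstacle --- is showing that $T$ is unbounded in $\lambda$, and this is where the strictly stronger hypothesis that $\bb{Q}\times\bb{Q}$ (not merely $\bb{Q}$) is $\theta^+$-distributive appears to be essential. Suppose for contradiction that $T$ is bounded by some $\alpha^{**}<\lambda$; then for each $\beta\in(\alpha^{**},\lambda)$ we can choose $p_\beta\leq q_0$ with $p_\beta\Vdash\beta\notin\dot T_0$. Passing to mutually generic filters $G_1,G_2$ for $\bb{Q}$ over $V$ and the pair of sequences $\langle T^1_i\rangle,\langle T^2_i\rangle$ in $V[G_1\times G_2]$, the plan is to use the $\theta^+$-distributivity of the product to extract an $X\in[\lambda]^\theta\cap V$ whose coverings by the pieces $X\cap T^k_i$ via property (4), combined with the matrix-coherence given by property (3), cannot be simultaneously consistent in the two generics --- producing a concrete failure of (4) and hence the desired contradiction. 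This combinatorial extraction, marrying product genericity with the matrix $\mathcal{D}$ and the assumed failure of $\mathrm{CP}$ in $V$, is where the main technical difficulty lies.
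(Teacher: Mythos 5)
Your overall strategy matches the paper's in outline --- argue by contradiction, pass to $\bb{Q}\times\bb{Q}$ with mutually generic filters, use the $\theta^+$-distributivity of the product to pull a $\theta$-sized set back into $V$, and play conditions (3) and (4) against each other in the two coordinates --- but the proposal stops exactly where the proof actually happens. The step you flag as ``the main technical difficulty'' is not a technicality to be filled in later; it is the entire content of the lemma, and you have not supplied it. Concretely, what is missing is the paper's density claim: for all $i,j<\theta$, the set of pairs $(q_0,q_1)$ such that for some $\alpha$, $q_0\Vdash\alpha\in\dot T_i$ and $q_1\Vdash\alpha\notin\dot T_j$ is dense in $\bb{Q}\times\bb{Q}$. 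Its proof uses the failure of $\mathrm{CP}(\mathcal{D})$ in $V$ in an essential way: the set $A_0$ of ordinals that \emph{some} extension of $q_0$ forces into $\dot T_i$ is unbounded, so by the failure of $\mathrm{CP}$ there is $X\in[A_0]^{\leq\theta}$ covered by no $D(k,\beta)$, and condition (4) then forces $X\not\subseteq\dot T_j$, which yields the required pair of conditions. Granting density, one takes $G_0\times G_1$ generic, chooses for each $j<\theta$ an $\alpha_j\in T^0_0\setminus T^1_j$, observes $X=\{\alpha_j\mid j<\theta\}\in V$ by distributivity of the product, applies (4) in $V[G_0]$ to get $X\subseteq D(i,\gamma_X)$, and applies (3) in $V[G_1]$ to get $D(i,\gamma_X)\subseteq T^1_j$ for some $j$, contradicting $\alpha_j\notin T^1_j$. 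None of this appears in your write-up.

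Two further points. First, your specific setup for the crux --- assuming the decided set $T$ is bounded and choosing, for each $\beta$ above the bound, a condition $p_\beta\leq q_0$ forcing $\beta\notin\dot T_0$ --- does not obviously lead anywhere: $\bb{Q}$ is only assumed $\theta^+$-distributive, not $\theta^+$-closed, so you cannot amalgamate $\theta$-many of these conditions into one, and a single such $p_\beta$ tells you nothing about the global behavior of $\dot T_0$. The paper avoids this by working with densities in the product rather than with individual conditions chosen in $V$, and by using the failure of $\mathrm{CP}$ (rather than boundedness of the decided set) as the engine of the density argument. Second, your reduction ``if $T$ is unbounded then $T$ witnesses $\mathrm{CP}(\mathcal{D})$ in $V$'' is correct and is a reasonable reframing, but it only relocates the problem; as written, the proposal contains a correct outer shell with the core argument absent.
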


\begin{remark}
	The above assumptions about $\langle T_i \mid i < \theta \rangle$ essentially amount to saying that forcing with $\bb{Q}$ extends $\mathcal{D}$ by adding a column at $\lambda$.
\end{remark}

\begin{proof}
  Suppose for sake of contradiction that $\mathrm{CP}(\mathcal{D})$ fails in $V$. Let $\langle \dot{T}_i \mid i < \theta \rangle$ be a sequence of $\bb{Q}$-names forced to have the properties enumerated in the statement of the lemma. Since $\lambda$ is regular and $\bb{Q}$ is $\theta^+$-distributive, it is forced that, for large enough $i < \theta$, $\dot{T}_i$ is unbounded in $\lambda$. Thus, by forcing below a stronger condition and moving to a subsequence if necessary, we may assume without loss of generality that $\dot{T}_0$ is forced to be unbounded in $\lambda$. By condition (4) in the statement of the lemma, we have that, for all $i < \theta$, $\Vdash_{\bb{Q}} ``[\dot{T}_i]^{\leq \theta}$ is covered by $\mathcal{D}."$ In particular, since $\mathrm{CP}(\mathcal{D})$ fails in $V$, we have $\Vdash_{\bb{Q}} ``\dot{T}_i \not\in V."$

	\begin{claim} \label{differenceclaim}
		Suppose $i,j < \theta$. Then the set $E_{i,j} = \{(q_0, q_1) \in \bb{Q} \times \bb{Q} \mid$ for some $\alpha < \lambda$, $q_0 \Vdash_{\bb{Q}} ``\alpha \in \dot{T}_i"$ and $q_1 \Vdash_{\bb{Q}} ``\alpha \not\in \dot{T}_j"\}$ is dense in $\bb{Q} \times \bb{Q}$. 
	\end{claim}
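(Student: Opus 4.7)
The plan is to argue by contradiction. Suppose $(p_0, p_1) \in \bb{Q} \times \bb{Q}$ has no extension in $E_{i,j}$; I will extract from $(p_0, p_1)$ a set $T \in V$ that witnesses $\mathrm{CP}(\mathcal{D})$, contradicting the standing assumption that $\mathrm{CP}(\mathcal{D})$ fails in $V$.

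Working in $V$, define
\[
T = \{\alpha < \lambda \mid \exists q_0 \leq p_0, \ q_0 \Vdash_{\bb{Q}} \alpha \in \dot{T}_i\}.
\]
The hypothesis that $(p_0, p_1)$ has no extension in $E_{i,j}$ says exactly that for each $\alpha \in T$, no $q_1 \leq p_1$ forces $\alpha \notin \dot{T}_j$; by the standard dichotomy for the forcing relation, this yields $p_1 \Vdash_{\bb{Q}} \alpha \in \dot{T}_j$ for every $\alpha \in T$. Moreover $T$ is unbounded in $\lambda$: since $\dot{T}_0$ is forced to be unbounded and $\dot{T}_0 \subseteq \dot{T}_i$, for any $\alpha < \lambda$ some $q_0 \leq p_0$ decides a value $\beta > \alpha$ to lie in $\dot{T}_i$, placing $\beta$ in $T$.

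To finish, I would verify that every $X \in [T]^\theta \cap V$ is covered by $\mathcal{D}$ in $V$. Fix such an $X$ and pass to a $\bb{Q}$-generic $H$ over $V$ with $p_1 \in H$. Inside $V[H]$ we have $X \subseteq T \subseteq \dot{T}_j^H$, hence $X \cap \dot{T}_j^H = X$. Applying hypothesis (4) of the lemma in $V[H]$ with the index $j$ from the claim statement produces some $j' < \theta$ with $X \cap \dot{T}_j^H \subseteq D(j', \gamma_X)$, i.e.\ $X \subseteq D(j', \gamma_X)$. Since $X$, $j'$, $\gamma_X$, and $D(j', \gamma_X)$ all lie in $V$, this inclusion is absolute to $V$. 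Thus $T$ is an unbounded subset of $\lambda$ each of whose $\theta$-sized subsets is covered by $\mathcal{D}$, witnessing $\mathrm{CP}(\mathcal{D})$ in $V$ and giving the contradiction.

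The main technical point is separating $V$ from $V[H]$ cleanly: $T$ and its covering witnesses must lie in $V$, yet the natural way to invoke (4) is inside the generic extension. The observation that $\gamma_X$ is defined from the local downward coherence of $\mathcal{D}$ already in $V$, and hence is unchanged when passing to $V[H]$, is what makes the transfer back to $V$ routine.
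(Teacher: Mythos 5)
Your proof is correct and is essentially the paper's argument run in the contrapositive direction: your set $T$ is exactly the paper's $A_0$, and both proofs hinge on using hypothesis (4) to translate between coverage by $\mathcal{D}$ in $V$ and containment in $\dot{T}_j$ in the extension. The paper instead argues directly --- from the failure of $\mathrm{CP}(\mathcal{D})$ it extracts a single $X \in [A_0]^{\leq\theta}$ not covered by $\mathcal{D}$, concludes via (4) that $X \not\subseteq \dot{T}_j$ is forced, and reads off the desired pair of conditions --- but the mathematical content is the same.
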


	\begin{proof}
		Fix $i,j < \theta$ and $(p_0, p_1) \in \bb{Q} \times \bb{Q}$. We will find $(q_0, q_1) \leq (p_0, p_1)$ with $(q_0, q_1) \in E_{i,j}$.

		Let $A_0 = \{\alpha < \lambda \mid$ for some $q \leq p_0$, $q \Vdash ``\alpha \in \dot{T}_i"\}$, and let $A_1 = \{\alpha < \lambda \mid$ for some $q \leq p_1$, $q \Vdash ``\alpha \in \dot{T}_j"\}$. Note that, since $\dot{T}_i$ and $\dot{T}_j$ are forced to be unbounded in $\lambda$, $A_0$ and $A_1$ are themselves unbounded in $\lambda$. Since $A_0$ is an unbounded subset of $\lambda$ in $V$ and $\mathrm{CP}(\mathcal{D})$ fails, there is $X \in [A_0]^{\leq \theta}$ such that, for all $k < \theta$ and $\beta < \lambda$, $X \not\subseteq D(k,\beta)$. By condition (4) in the statement of the lemma, this implies that $\Vdash ``X \not\subseteq \dot{T}_j."$ We can therefore find $q_1 \leq p_1$ and $\alpha \in X$ such that $q_1 \Vdash ``\alpha \not\in \dot{T}_j."$ Since $\alpha \in A_0$, we can find $q_0 \leq p_0$ such that $q_0 \Vdash ``\alpha \in \dot{T}_i."$ Then $(q_0, q_1) \leq (p_0, p_1)$ and $(q_0, q_1) \in E_{i,j}$. 
	\end{proof}
	Let $G_0 \times G_1$ be $\bb{Q} \times \bb{Q}$-generic over $V$, and move to $V[G_0 \times G_1]$. For $\epsilon < 2$, let $\langle T^\epsilon_i \mid i < \theta \rangle$ be the realization of $\langle \dot{T}_i \mid i < \theta \rangle$ in $V[G_\epsilon]$. By Claim \ref{differenceclaim}, for every $j < \theta$, we can fix $\alpha_j \in T^0_0 \setminus T^1_j$. Let $X = \{\alpha_j \mid j < \theta\}$. Since $\bb{Q} \times \bb{Q}$ is $\theta^+$-distributive, we have $X \in V$. By condition (4) in the statement of the lemma, we can fix $i < \theta$ such that $X = X \cap T^0_0 \subseteq D(i, \gamma_X)$. By condition (3), we can find $j < \theta$ such that $D(i, \gamma_X) \subseteq T^1_j$. But then $X \subseteq T^1_j$, so, in particular, $\alpha_j \in T^1_j$, which is a contradiction.
\end{proof}

For our forcing arguments, we will need the notion of \emph{strategic closure}.

\begin{definition}
	Let $\mathbb{P}$ be a partial order and let $\beta$ be an ordinal.
	\begin{enumerate}
		\item {The two-player game $G_\beta(\mathbb{P})$ is defined as follows: Players I and II alternately play entries in $\langle p_\alpha \mid \alpha < \beta \rangle$, a decreasing sequence of conditions in $\mathbb{P}$ with $p_0 = \bbm{1}_{\mathbb{P}}$. Player I plays at odd stages, and Player II plays at even stages (including all limit stages). If there is an even stage $\alpha < \beta$ at which Player II can not play, then Player I wins. Otherwise, Player II wins.}
		\item{$\mathbb{P}$ is said to be {\em $\beta$-strategically closed} if Player II has a winning strategy for the game $G_\beta(\mathbb{P})$.}
	\end{enumerate}
\end{definition}

\begin{remark}
  By an easy argument, if $\kappa$ is a cardinal, $\bb{P}$ is a forcing poset, and $\bb{P}$ is $(\kappa + 1)$-strategically closed, then 
  $\bb{P}$ is $\kappa^+$-distributive.
\end{remark}

We now introduce some relevant forcing posets. Let $\lambda$ be a regular uncountable cardinal. We first define a poset $\bb{S}(\lambda)$ designed to add a $\square(\lambda, 2)$-sequence. Conditions of $\bb{S}(\lambda)$ are of the form $s = \langle \mathcal{C}^s_\alpha \mid \alpha \leq \gamma^s \rangle$, where:

\begin{itemize}
	\item{$\gamma^s < \lambda$;}
	\item{for all limit ordinals $\alpha \leq \gamma^s$, $1 \leq |\mathcal{C}_\alpha| \leq 2$;}
	\item{for all limit $\alpha \leq \gamma^s$ and all $C \in \mathcal{C}_\alpha$, $C$ is a club in $\alpha$;}
	\item{for all limit $\alpha < \beta \leq \gamma^s$ and all $C \in \mathcal{C}_\beta$, if $\alpha \in C'$, then $C \cap \alpha \in \mathcal{C}_\alpha$.}
\end{itemize}

If $s_0,s_1 \in \bb{S}(\lambda)$, then $s_1 \leq s_0$ if and only if $s_1$ end-extends $s_0$, i.e. $\gamma^{s_1} \geq \gamma^{s_0}$ and, for all $\alpha \leq \gamma^{s_0}$, $\mathcal{C}^{s_1}_\alpha = \mathcal{C}^{s_0}_\alpha$.

The following facts are standard. See, for example, \cite{lambiehanson} for proofs. (In that paper, we are considering the forcing to add a $\square(\lambda)$-sequence, but the same proofs work in this case.)

\begin{proposition}
	Let $\lambda$ be a regular uncountable cardinal, and let $\bb{S} = \bb{S}(\lambda)$.
	\begin{enumerate}
		\item{$\bb{S}$ is countably closed.}
		\item{$\bb{S}$ is $\lambda$-strategically closed.}
		\item{If $G$ is $\bb{S}$-generic over $V$, then $\bigcup G = \langle \mathcal{C}_\alpha \mid \alpha < \lambda \rangle$ is a $\square(\lambda, 2)$-sequence.}
	\end{enumerate}
\end{proposition}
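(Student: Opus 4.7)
The plan is to prove the three parts in order, following the standard template used in the author's earlier treatment of the $\square(\lambda)$-forcing in \cite{lambiehanson}.

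For (1), given a descending $\omega$-sequence $\langle s_n \mid n < \omega \rangle$, I form the natural union $t$ with $\gamma^t := \sup_n \gamma^{s_n}$ and $\mathcal{C}^t_\alpha := \mathcal{C}^{s_n}_\alpha$ whenever $\gamma^{s_n} \geq \alpha$. If $\gamma^t$ is a new limit of cofinality $\omega$, I set $\mathcal{C}^t_{\gamma^t} := \{D\}$ for any cofinal $\omega$-sequence $D \subseteq \gamma^t$; coherence at $\gamma^t$ is vacuous since $D$ has no limit points below $\gamma^t$. For (2), I exhibit a winning strategy for Player II: at successor even stages she extends Player I's move to $p_\beta$ with $\gamma^{p_\beta}$ a successor ordinal; at a limit even stage $\beta$, she plays the union extended at $\gamma^{p_\beta} := \sup_{\beta' < \beta} \gamma^{p_{\beta'}}$ by $\mathcal{C}^{p_\beta}_{\gamma^{p_\beta}} := \{C^\beta\}$, where $C^\beta := \{\gamma^{p_{\beta'}} \mid \beta' < \beta,\ \beta' \text{ even}\}$. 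The key point is that the limit points of $C^\beta$ below $\gamma^{p_\beta}$ are precisely the $\gamma^{p_{\beta''}}$ for limit even $\beta'' < \beta$, where by induction the strategy has committed to $\mathcal{C}^{p_{\beta''}}_{\gamma^{p_{\beta''}}} = \{C^{\beta''}\} = \{C^\beta \cap \gamma^{p_{\beta''}}\}$, yielding coherence.

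For (3), the first three clauses of the $\square(\lambda, 2)$-definition transfer from individual conditions to $\vec{\mathcal{C}} := \bigcup G$ by density and the filter property of $G$; and by the remark following $\beta$-strategic closure, $\bb{S}$ is $<\lambda$-distributive, so $\lambda$ remains regular in $V[G]$. For the nonexistence of a thread I argue by contradiction: suppose $p \Vdash_{\bb{S}} \dot{D}$ is a threading club. Using $<\lambda$-distributivity, I extend $p$ to some $p'$ with $\gamma^{p'} = \alpha_0$ a limit ordinal such that $p' \Vdash \alpha_0 \in \dot{D}'$ and $p'$ decides $\dot{D} \cap \alpha_0 = D_0$ for some specific $D_0 \in V$ unbounded in $\alpha_0$. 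I then pick $\beta$ with $\alpha_0 < \beta < \lambda$ of uncountable cofinality and extend $p'$ to $q$ with $\gamma^q = \beta$ and $\mathcal{C}^q_\beta := \{C_0, C_1\}$, where $C_0, C_1 \subseteq (\alpha_0, \beta)$ are two distinct clubs in $\beta$. Then $C_0 \cap \alpha_0 = C_1 \cap \alpha_0 = \emptyset$ while $q \Vdash (\dot{D} \cap \beta) \cap \alpha_0 = D_0 \neq \emptyset$, so the threading conclusion $\dot{D} \cap \beta \in \{C_0, C_1\}$ is impossible, a contradiction.

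The main obstacle will be the construction of $q$ together with verification of coherence at intermediate limits $\xi \in (\alpha_0, \beta]$. I plan to let $C_0$ be the set of limit ordinals in $(\alpha_0, \beta)$ and $C_1$ a small perturbation of $C_0$ (e.g., removing and replacing a single non-limit initial element), then choose $\mathcal{C}^q_\xi$ at intermediate limits $\xi$ to match the appropriate initial segments of $C_0$ and $C_1$. Since no such coherence constraint touches the domain of $p'$, which sits at $\alpha_0$, the construction yields a legitimate $q \leq p'$.
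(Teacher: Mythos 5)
Your treatments of (1) and (2) are the standard arguments (the paper itself only cites \cite{lambiehanson} for these), and they are correct: at a limit of conditions one may install any single cofinal $\omega$-sequence at the top since it has no limit points, and Player II's strategy of recording the tops of the even-stage conditions gives coherence at limits exactly as you describe.

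The non-threading argument in (3), however, has a genuine gap at the final step. The threading clause only constrains $\mathcal{C}_\beta$ at ordinals $\beta \in \dot{D}'$, i.e.\ at limit points of the thread that lie in the thread. Your condition $q$ fixes $\mathcal{C}^q_\beta = \{C_0, C_1\}$ at an ordinal $\beta$ that you chose freely, but nothing forces $\beta \in \dot{D}$ (let alone $\beta \in \dot{D}'$), so no contradiction with $p \Vdash ``\dot{D}$ threads$"$ arises: in the extension $\dot{D}$ may simply avoid $\beta$. Nor can this be patched in the stated order of operations — to force $\beta \in \dot{D}'$ you must first commit to conditions deciding enough of $\dot{D}$, and any condition forcing $\beta \in \dot{D}$ will typically already have $\gamma \geq \beta$ with $\mathcal{C}_\beta$ determined, so you lose the freedom to choose $\mathcal{C}_\beta$ afterwards. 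The correct move is to derive the contradiction at $\alpha_0$ itself, where you have already done all the work: build the descending $\omega$-sequence below $p$ so that cofinally many ordinals below $\alpha_0 := \sup_n \gamma^{p_n}$ are forced into $\dot{D}$ and $\dot{D} \cap \alpha_0 = D_0$ is decided; then, since $\dot{D}$ is forced to be closed in $\lambda$, the lower bound $p'$ automatically forces $\alpha_0 \in \dot{D}'$, and in forming $p'$ you are still free to set $\mathcal{C}^{p'}_{\alpha_0} = \{E\}$ for any cofinal $\omega$-sequence $E \subseteq \alpha_0$ with $E \neq D_0$ (coherence at $\alpha_0$ is vacuous, exactly as in your proof of (1)). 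Then $p' \leq p$ forces $\dot{D} \cap \alpha_0 = D_0 \notin \mathcal{C}_{\alpha_0}$ while $\alpha_0 \in \dot{D}'$, which is the desired contradiction. This also removes the need for the delicate coherence bookkeeping at intermediate limits of $C_0, C_1$ in your proposed $q$, which was the step you flagged as the main obstacle.
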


Next, we introduce a forcing poset designed to thread a $\square(\lambda, 2)$ sequence. Let $\vec{\mathcal{C}} = \langle \mathcal{C}_\alpha \mid \alpha < \lambda \rangle$ be a $\square(\lambda, 2)$-sequence. $\bb{T}(\vec{\mathcal{C}})$ is the forcing poset whose conditions are closed, bounded subsets $t$ of $\lambda$ such that, for every $\alpha \in t'$, $t \cap \alpha \in \mathcal{C}_\alpha$. If $t_0, t_1 \in \bb{T}(\vec{\mathcal{C}})$, then $t_1 \leq t_0$ if and only if $t_1$ end-extends $t_0$.

\begin{proposition} \label{denseclosed}
	Let $\lambda$ be a regular uncountable cardinal, and let $\bb{S} = \bb{S}(\lambda)$. Let $\dot{\vec{\mathcal{C}}}$ be the canonical $\bb{S}$-name for the $\square(\lambda, 2)$-sequence added by the generic filter, and let $\dot{\bb{T}}$ be an $\bb{S}$-name for $\bb{T}(\dot{\vec{\mathcal{C}}})$. Then $\bb{S} * (\dot{\bb{T}} \times \dot{\bb{T}})$ has a dense $\lambda$-closed subset.
\end{proposition}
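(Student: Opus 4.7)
The plan is to define $D$ to be the set of conditions $(s, (\check{t}_0, \check{t}_1)) \in \bb{S} * (\dot{\bb{T}} \times \dot{\bb{T}})$ such that $\gamma^s$ is a limit ordinal, $t_0, t_1 \in V$ are closed bounded subsets of $\lambda$ with $\max(t_0) = \max(t_1) = \gamma^s$ and $\gamma^s \in (t_0)' \cap (t_1)'$, and $\{t_0 \cap \gamma^s, t_1 \cap \gamma^s\} \subseteq \mathcal{C}^s_{\gamma^s}$. I claim $D$ is dense and $\lambda$-closed.

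For density, start with $(s, (\dot{t}_0, \dot{t}_1))$. Since $\bb{S}$ is $\lambda$-strategically closed and hence $\lambda$-distributive, the names $\dot{t}_0, \dot{t}_1$, forced to be bounded subsets of $\lambda$, are for ground-model objects; extend $s$ to $s_1$ deciding $\dot{t}_i = \check{t}_i$ for specific $t_i \in V$. Fix a limit ordinal $\gamma$ of cofinality $\omega$ with $\gamma > \max(\max(t_0), \max(t_1), \gamma^{s_1})$ and a cofinal $\omega$-sequence $\sigma \subseteq \gamma$ with $\min(\sigma) > \max(\max(t_0), \max(t_1))$. Extend $s_1$ to $s_2$ of length $\gamma$ by recursion on $\alpha \in (\gamma^{s_1}, \gamma)$: at each limit $\alpha$, insist $t_i \cap \alpha \in \mathcal{C}^{s_2}_\alpha$ whenever $\alpha \in (t_i)'$ (consistent with the downward coherence demands because the corresponding inclusions at $\alpha' \in (t_i)' \cap (\gamma^{s_1} + 1)$ were already forced by $s_1 \Vdash \check{t}_i \in \dot{\bb{T}}$), using $\lambda$-strategic closure to make any remaining coherent choices. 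At the top, set $\mathcal{C}^{s_2}_\gamma = \{t_0 \cup \sigma, t_1 \cup \sigma\}$ and let $t_i^* = t_i \cup \sigma \cup \{\gamma\}$. Then $\gamma \in (t_i^*)'$ (witnessed by $\sigma$) and $t_i^* \cap \gamma = t_i \cup \sigma \in \mathcal{C}^{s_2}_\gamma$, so $(s_2, (\check{t}_0^*, \check{t}_1^*)) \in D$ extends $(s, (\dot{t}_0, \dot{t}_1))$.

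For $\lambda$-closure, let $\langle (s^\xi, (\check{t}_0^\xi, \check{t}_1^\xi)) \mid \xi < \delta \rangle$ be a decreasing sequence in $D$ with $\delta < \lambda$ a limit (the successor case is trivial), and assume WLOG that the $\gamma^{s^\xi}$'s strictly increase. Let $\gamma^* = \sup_\xi \gamma^{s^\xi}$, $c_\epsilon^* = \bigcup_\xi t_\epsilon^\xi$, and $t_\epsilon^* = c_\epsilon^* \cup \{\gamma^*\}$ for $\epsilon \in \{0,1\}$. Define $s^*$ by extending $\bigcup_\xi s^\xi$ with $\gamma^{s^*} = \gamma^*$ and $\mathcal{C}^{s^*}_{\gamma^*} = \{c_0^*, c_1^*\}$. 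End-extension in $\bb{T}$ guarantees that, for any $\alpha < \gamma^*$ and any $\xi$ with $\gamma^{s^\xi} \geq \alpha$, $c_\epsilon^* \cap \alpha = t_\epsilon^\xi \cap \alpha$; hence each $c_\epsilon^*$ is a club in $\gamma^*$. Coherence of $s^*$ at $\gamma^*$: for $\alpha \in (c_\epsilon^*)' \cap \gamma^*$, picking such a $\xi$ yields $c_\epsilon^* \cap \alpha = t_\epsilon^\xi \cap \alpha \in \mathcal{C}^{s^\xi}_\alpha = \mathcal{C}^{s^*}_\alpha$ by coherence of $s^\xi$ applied to $t_\epsilon^\xi \cap \gamma^{s^\xi} = c_\epsilon^\xi \in \mathcal{C}^{s^\xi}_{\gamma^{s^\xi}}$. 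Thus $s^* \in \bb{S}$; and since $\gamma^* \in (t_\epsilon^*)'$ with $t_\epsilon^* \cap \gamma^* = c_\epsilon^* \in \mathcal{C}^{s^*}_{\gamma^*}$, the condition $(s^*, (\check{t}_0^*, \check{t}_1^*)) \in D$ is the desired lower bound.

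The main delicate step is the recursive construction of $s_2$ in the density argument: at each $\alpha \in (\gamma^{s_1}, \gamma)$ we must simultaneously enforce $t_i \cap \alpha \in \mathcal{C}^{s_2}_\alpha$ for $i$ with $\alpha \in (t_i)'$, respect the cardinality bound $|\mathcal{C}^{s_2}_\alpha| \leq 2$, and maintain downward coherence into the already-fixed $s_1$. At most two distinct sets $t_0 \cap \alpha, t_1 \cap \alpha$ are ever demanded, the lower-level inclusions are supplied for free by $s_1 \Vdash \check{t}_i \in \dot{\bb{T}}$, and the $\lambda$-strategic closure of $\bb{S}$ absorbs the remaining flexibility.
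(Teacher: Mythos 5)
Your proposal is correct and follows essentially the same route as the paper: the dense set consists of conditions whose $\bb{T}$-coordinates are decided ground-model closed bounded sets with maximum equal to $\gamma^s$, and your closure argument (take unions and seal with $\mathcal{C}^{s^*}_{\gamma^*} = \{c_0^*, c_1^*\}$) is the paper's almost verbatim. The only difference is that you additionally demand $\gamma^s \in (t_0)' \cap (t_1)'$ with the coherence recorded in $\mathcal{C}^s_{\gamma^s}$, which is harmless but forces your more elaborate density argument with the auxiliary cofinal $\omega$-sequence $\sigma$; the paper simply appends $\gamma^s$ as an isolated top point of each $t_i$, so no new coherence obligations arise and density is immediate.
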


\begin{proof}
	Let $\bb{U}$ be the set of $(s, (\dot{t}_0, \dot{t}_1)) \in \bb{S} * (\dot{\bb{T}} \times \dot{\bb{T}})$ such that:
	\begin{itemize}
		\item{there are $t_0, t_1 \in V$ such that $s \Vdash_{\bb{S}} ``\dot{t}_0 = t_0$ and $\dot{t}_1 = t_1"$;}
		\item{$\gamma^s = \max(t_0) = \max(t_1)$.}
	\end{itemize}

	We claim that $\bb{U}$ is the desired dense $\lambda$-closed subset. We first show that it is dense. To this end, let $(s', (\dot{t}'_0, \dot{t}'_1)) \in \bb{S} * (\dot{\bb{T}} \times \dot{\bb{T}})$. First, find $s \leq s'$ and $t'_0, t'_1 \in V$ such that $s \Vdash_{\bb{S}} ``\dot{t}'_0 = t'_0$ and $\dot{t}'_1 = t'_1."$ This is possible, since $\bb{S}$ is $\lambda$-strategically closed and hence $\lambda$-distributive. By extending $s$ further if necessary, we may assume that $\gamma^{s} > \max(t'_0), \max(t'_1)$. Now let $t_0 = t'_0 \cup \{\gamma^s\}$ and $t_1 = t'_1 \cup \{\gamma^s\}$. It is easily verified that $(s, (t_0, t_1)) \leq (s', (\dot{t}'_0, \dot{t}'_1))$ and $(s, (t_0, t_1)) \in \bb{U}$.

	We now show that $\bb{U}$ is $\lambda$-closed. Let $\delta < \lambda$ be a limit ordinal, and suppose $\langle (s_\eta, (\dot{t}_{0, \eta}, \dot{t}_{1, \eta})) \mid \eta < \delta \rangle$ is a strictly decreasing sequence from $\bb{U}$. Since, for each $\eta < \delta$, $s_\eta$ decides the value of $\dot{t}_{0, \eta}$ and $\dot{t}_{1, \eta}$, we omit the dots and assume they are actual closed, bounded subsets of $\lambda$ in $V$ rather than $\bb{S}$-names. Let $\gamma = \sup(\{\gamma^{s_\eta} \mid \eta < \delta\})$. Let $t'_0 = \bigcup_{\eta < \delta} t_{0, \eta}$ and $t'_1 = \bigcup_{\eta < \delta} t_{1, \eta}$. Note that $\gamma = \sup(t'_0) = \sup(t'_1)$. We now define $(s, (t_0, t_1)) \in \bb{U}$ that will be a lower bound for the sequence. $s$ will be of the form $\langle \mathcal{C}^s_\alpha \mid \alpha \leq \gamma \rangle$. If $\alpha < \gamma$, then let $\mathcal{C}^s_\alpha = \mathcal{C}^{s_\eta}_\alpha$ for some $\eta < \delta$ such that $\alpha \leq \gamma^{s_\eta}$. Let $\mathcal{C}^s_\gamma = \{t'_0, t'_1\}$. Finally, let $t_0 = t'_0 \cup \{\gamma\}$, and $t_1 = t'_1 \cup \{\gamma\}$. It is easy to check that $(s, (t_0, t_1)) \in \bb{U}$ and is a lower bound for $\langle (s_\eta, (t_{0,\eta}, t_{1, \eta})) \mid \eta < \delta \rangle$.
\end{proof}

\begin{remark}
	Due to the fact that our forcings are tree-like (i.e. if $q \leq p_0, p_1$, then $p_0$ and $p_1$ are comparable), the subset $\bb{U}$ isolated above is actually $\lambda$-directed closed. Also, Proposition \ref{denseclosed} easily implies that $\bb{S} * \dot{\bb{T}}$ also has a dense $\lambda$-directed closed subset.
\end{remark}

The proof of Proposition \ref{denseclosed} easily extends to yield the following.

\begin{proposition} \label{denseclosed2}
	Let $\lambda$, $\bb{S}$, and $\dot{\bb{T}}$ be as in the statement of Proposition \ref{denseclosed}. Suppose that $\dot{\bb{R}}$ is an $\bb{S} * \dot{\bb{T}}$-name for a $\lambda$-closed forcing poset. Then $\bb{S} * ((\dot{\bb{T}} * \dot{\bb{R}}) \times (\dot{\bb{T}} * \dot{\bb{R}}))$ has a dense $\lambda$-closed subset.
\end{proposition}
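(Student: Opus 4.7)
The plan is to exhibit a dense $\lambda$-closed subset $\bb{U}'$ of $\bb{S} * ((\dot{\bb{T}} * \dot{\bb{R}}) \times (\dot{\bb{T}} * \dot{\bb{R}}))$ obtained by lifting the dense $\lambda$-closed subset $\bb{U}$ of $\bb{S} * (\dot{\bb{T}} \times \dot{\bb{T}})$ constructed in Proposition \ref{denseclosed}. Concretely, I take $\bb{U}'$ to be the set of conditions of the form $(s, ((t_0, \dot{r}_0), (t_1, \dot{r}_1)))$ such that $(s, (t_0, t_1)) \in \bb{U}$, placing no further constraint on the names $\dot{r}_0, \dot{r}_1$ beyond what is built into the iteration $\bb{S} * \dot{\bb{T}} * \dot{\bb{R}}$.

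Density of $\bb{U}'$ reduces immediately to the density of $\bb{U}$. Given a condition $(s', ((\dot{t}'_0, \dot{r}'_0), (\dot{t}'_1, \dot{r}'_1)))$, I first apply the density of $\bb{U}$ to find $(s, (t_0, t_1)) \in \bb{U}$ extending $(s', (\dot{t}'_0, \dot{t}'_1))$. Then $(s, ((t_0, \dot{r}'_0), (t_1, \dot{r}'_1)))$ is an extension of the original condition and lies in $\bb{U}'$; since the $\dot{\bb{T}}$-components have only been strengthened, the names $\dot{r}'_0, \dot{r}'_1$ remain valid conditions in the respective copies of $\dot{\bb{R}}$.

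For $\lambda$-closure, I start with a strictly decreasing sequence $\langle (s_\eta, ((t_{0,\eta}, \dot{r}_{0,\eta}), (t_{1,\eta}, \dot{r}_{1,\eta}))) \mid \eta < \delta \rangle$ in $\bb{U}'$ with $\delta < \lambda$ a limit ordinal. Applying the closure argument from the proof of Proposition \ref{denseclosed} to the projected sequence $\langle (s_\eta, (t_{0,\eta}, t_{1,\eta})) \mid \eta < \delta \rangle$ produces a lower bound $(s, (t_0, t_1)) \in \bb{U}$. For each $\epsilon < 2$, the condition $(s, t_\epsilon)$ extends every $(s_\eta, t_{\epsilon, \eta})$ and therefore forces $\langle \dot{r}_{\epsilon, \eta} \mid \eta < \delta \rangle$ to be a decreasing sequence in $\dot{\bb{R}}$. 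Since $\dot{\bb{R}}$ is forced to be $\lambda$-closed and $\delta < \lambda$, there is an $\bb{S} * \dot{\bb{T}}$-name $\dot{r}_\epsilon$ such that $(s, t_\epsilon)$ forces $\dot{r}_\epsilon$ to be a lower bound for $\langle \dot{r}_{\epsilon, \eta} \mid \eta < \delta \rangle$. Then $(s, ((t_0, \dot{r}_0), (t_1, \dot{r}_1))) \in \bb{U}'$ is the desired lower bound.

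I expect no real obstacle beyond the bookkeeping involved in the two-sided product: the two copies of $\dot{\bb{R}}$ live over distinct $\dot{\bb{T}}$-generics, but Proposition \ref{denseclosed} has already done the main work of producing a single $s$ that decides both $t_0$ and $t_1$ as ground model sets, so the two $\dot{\bb{R}}$-factors can each be handled by a separate, routine application of the hypothesized $\lambda$-closure of $\dot{\bb{R}}$.
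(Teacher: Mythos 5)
Your argument is correct and is precisely the extension of Proposition \ref{denseclosed} that the paper intends (the paper offers no separate proof, saying only that the earlier proof ``easily extends''): lift $\bb{U}$ to $\bb{U}'$ by constraining only the $\bb{S} * (\dot{\bb{T}} \times \dot{\bb{T}})$-part, and handle the two $\dot{\bb{R}}$-coordinates by a mixing argument using the forced $\lambda$-closure of $\dot{\bb{R}}$ below $(s, t_\epsilon)$. The one point you elide is that the projection to $\bb{U}$ of a strictly decreasing sequence in $\bb{U}'$ need only be non-strictly decreasing, but this is handled trivially by passing to a strictly decreasing cofinal subsequence or observing that the projection is eventually constant.
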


We now prove the main result of this section in three distinct cases. We first look at the case of inaccessible cardinals.

\begin{theorem}
	Suppose $\lambda$ is a supercompact cardinal indestructible under $\lambda$-directed closed forcing. Let $\bb{S} = \bb{S}(\lambda)$, and let $G$ be $\bb{S}$-generic over $V$. Then, in $V[G]$, $\lambda$ is inaccessible and $\square(\lambda, 2)$ and $\mathrm{CP}^*(\lambda)$ both hold.
\end{theorem}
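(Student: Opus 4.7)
The inaccessibility of $\lambda$ in $V[G]$ should be immediate from the already-stated properties of $\bb{S}$: since $\bb{S}$ is $\lambda$-strategically closed (hence $\lambda$-distributive) and has cardinality $\lambda = 2^{<\lambda}$, it preserves the regularity of $\lambda$ and cardinal arithmetic below $\lambda$. The fact that $\square(\lambda, 2)$ holds in $V[G]$ is handed to us by the previous proposition, so the only substantive content is $\mathrm{CP}^*(\lambda)$.

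For that, fix a regular $\theta$ with $\theta^+ < \lambda$ and a locally downward coherent $\theta$-covering matrix $\mathcal{D}$ in $V[G]$. The plan is to exhibit a forcing $\bb{Q} \in V[G]$ satisfying the hypotheses of Lemma \ref{preservationlemma} and then invoke that lemma. I would take $\bb{Q} = \bb{T}(\vec{\mathcal{C}})$, where $\vec{\mathcal{C}}$ is the generic $\square(\lambda, 2)$-sequence added by $G$. Proposition \ref{denseclosed} gives that $\bb{S} * (\dot{\bb{T}} \times \dot{\bb{T}})$ has a dense $\lambda$-closed subset in $V$, so in $V[G]$ the product $\bb{Q} \times \bb{Q}$ adds no ${<}\lambda$-sequences of ordinals and is in particular $\theta^+$-distributive, as required. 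The remark after that proposition supplies a dense $\lambda$-directed closed subset of $\bb{S} * \dot{\bb{T}}$, so the indestructibility of $\lambda$ ensures that $\lambda$ remains supercompact in $V[G][H]$, where $H$ is $\bb{Q}$-generic over $V[G]$.

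Working in $V[G][H]$, I would fix a supercompactness embedding $j : V[G][H] \to M$ with $\mathrm{crit}(j) = \lambda$ and $M^\lambda \subseteq M$, and set $T_i := j(\mathcal{D})(i,\lambda)$ for each $i < \theta$; the sequence $\langle T_i \mid i < \theta \rangle$ lies in $M \subseteq V[G][H]$ by the closure of $M$. Writing $D^*(i,\beta) = j(\mathcal{D})(i,\beta)$, the critical point condition gives $D^*(i,\beta) = D(i,\beta)$ for every $\beta < \lambda$, and $j$ fixes every $X \in [\lambda]^{\leq \theta}$ pointwise and setwise. Conditions (1)--(3) of Lemma \ref{preservationlemma} should then follow directly from clauses (1)--(3) of the covering matrix definition applied to $j(\mathcal{D})$ in $M$ at the new column $\beta = \lambda$. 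For condition (4), given $X \in [\lambda]^{\leq \theta}$, I would take $\gamma_X < \lambda$ to be the \emph{least} witness to the local downward coherence of $\mathcal{D}$ at $X$; by elementarity $\gamma_X = j(\gamma_X)$ is then the corresponding least witness for $j(\mathcal{D})$ at $j(X) = X$, and applying local downward coherence of $j(\mathcal{D})$ in $M$ at $\beta = \lambda$ produces the required $j' < \theta$ with $X \cap T_i \subseteq D(j', \gamma_X)$. Lemma \ref{preservationlemma} then delivers $\mathrm{CP}(\mathcal{D})$ back in $V[G]$.

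The main obstacle is the elementarity book-keeping in condition (4): one must verify that $j$ fixes any $X \in [\lambda]^{\leq \theta}$ pointwise and setwise (using $|X| \leq \theta < \mathrm{crit}(j) = \lambda$), that $\gamma_X$ can be made canonical enough to be fixed by $j$, and that $D^*(j',\gamma_X) = D(j',\gamma_X)$. Once this is in place, the substantive picture is that forcing with $\bb{Q}$ over $V[G]$ lets us ``append a new column at $\lambda$'' to $\mathcal{D}$ using the supercompactness embedding, and Lemma \ref{preservationlemma} converts this into covering of $[T]^\theta$ by $\mathcal{D}$ already in $V[G]$.
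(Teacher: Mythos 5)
Your proposal is correct and follows essentially the same route as the paper: force with $\bb{T}(\vec{\mathcal{C}})$ over $V[G]$, use the dense $\lambda$-closed subset of $\bb{S} * (\dot{\bb{T}} \times \dot{\bb{T}})$ for the distributivity hypothesis and the dense $\lambda$-directed closed subset of $\bb{S} * \dot{\bb{T}}$ together with indestructibility to recover supercompactness, then read off the new column $\langle j(\mathcal{D})(i,\lambda) \mid i < \theta\rangle$ from the embedding and invoke Lemma \ref{preservationlemma}. The only cosmetic difference is that the paper additionally observes, via Proposition \ref{downwardcoherent} and the inaccessibility of $\lambda$, that every $\theta$-covering matrix for $\lambda$ in $V[G]$ is automatically locally downward coherent, whereas you take that as a hypothesis (which the definition of $\mathrm{CP}(\lambda,\theta)$ permits).
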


\begin{proof}
	The fact that $\lambda$ is inaccessible and $\square(\lambda, 2)$ holds in $V[G]$ is immediate. We just need to verify that $\mathrm{CP}^*(\lambda)$ holds. Thus, fix a regular cardinal $\theta < \lambda$, and let $\mathcal{D} = \langle D(i, \beta) \mid i < \theta, \beta < \lambda \rangle$ be a $\theta$-covering matrix for $\lambda$ in $V[G]$. We will show that $\mathrm{CP}(\mathcal{D})$ holds. Note that, since $\lambda$ is inaccessible, Proposition \ref{downwardcoherent} implies that $\mathcal{D}$ is locally downward coherent, so $\gamma_X$ is defined for all $X \in [\lambda]^{\leq \theta}$. Let $\vec{\mathcal{C}}$ be the $\square(\lambda, 2)$-sequence introduced by $G$, and let $\bb{T} = \bb{T}(\vec{\mathcal{C}})$. Let $\dot{\bb{T}} \in V$ be an $\bb{S}$-name for $\bb{T}$. Let $H$ be $\bb{T}$-generic over $V[G]$. Since $\bb{S} * \dot{\bb{T}}$ has a dense $\lambda$-directed closed subset in $V$ and $\lambda$ was indestructibly supercompact in $V$, $\lambda$ is supercompact in $V[G*H]$. Let $j : V[G*H] \rightarrow M$ be an elementary embedding with $\mathrm{crit}(j) = \lambda$.

	In $M$, $j(\mathcal{D})$ is a locally downward coherent $\theta$-covering matrix for $j(\lambda)$. For $i < \theta$ and $\beta < \lambda$, $j(D(i,\beta)) = D(i, \beta)$. Thus, we can write $j(\mathcal{D}) = \langle D(i, \beta) \mid i < \theta, \beta < j(\lambda) \rangle$. Moreover, for all $X \in [\lambda]^{\leq \theta}$, we have $(\gamma_X)^\mathcal{D} = (\gamma_X)^{j(\mathcal{D})}$. Therefore, $\langle D(i, \lambda) \mid i < \theta \rangle$ satisfies the conditions placed on $\langle T_i \mid i < \theta \rangle$ in the statement of Lemma \ref{preservationlemma}, and $\langle D(i, \lambda) \mid i < \theta \rangle \in V[G*H]$. Moreover, Proposition \ref{denseclosed} implies that, in $V[G]$, $\bb{T} \times \bb{T}$ is $\lambda$-distributive (and hence $\theta^+$-distributive). Therefore, we may apply Proposition \ref{preservationlemma} to conclude that $\mathrm{CP}(\mathcal{D})$ holds in $V[G]$.
\end{proof}

In the successor case, when $\lambda = \kappa^+$, we can actually arrange for $\square_{\kappa, 2}$ to hold. To do this, we slightly modify our posets $\bb{S}$ and $\bb{T}$. First, given an uncountable cardinal $\kappa$, let $\bb{S}^*(\kappa)$ be the forcing poset whose conditions are of the form $s = \langle \mathcal{C}_\alpha \mid \alpha \leq \gamma^s \rangle$ such that:
\begin{itemize}
	\item{$\gamma^s < \kappa^+$;}
	\item{for all limit ordinals $\alpha \leq \gamma^s$, $1 \leq |\mathcal{C}_\alpha| \leq 2$;}
	\item{for all limit $\alpha \leq \gamma^s$ and all $C \in \mathcal{C}_\alpha$, $C$ is a club in $\alpha$ and $\mathrm{otp}(C) \leq \kappa$;}
	\item{for all limit $\alpha < \beta \leq \gamma^s$ and all $C \in \mathcal{C}_\beta$, if $\alpha \in C'$, then $C \cap \alpha \in \mathcal{C}_\alpha$.}
\end{itemize}
If $s_0, s_1 \in \bb{S}^*(\kappa)$, then $s_1 \leq s_0$ if and only if $s_1$ end-extends $s_0$. A straightforward adaptation of the analogous proofs for the poset $\bb{S}(\lambda)$ yields the following.

\begin{proposition}
	Let $\kappa$ be an uncountable cardinal, and let $\bb{S} = \bb{S}^*(\kappa)$.
	\begin{enumerate}
		\item{$\bb{S}$ is countably closed.}
		\item{$\bb{S}$ is $(\kappa + 1)$-strategically closed.}
		\item{If $G$ is $\bb{S}$-generic over $V$, then $\bigcup G = \langle \mathcal{C}_\alpha \mid \alpha < \kappa^+ \rangle$ is a $\square_{\kappa, 2}$-sequence.}
	\end{enumerate}
\end{proposition}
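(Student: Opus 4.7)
The plan is to mirror the arguments used for $\bb{S}(\lambda)$ with $\lambda = \kappa^+$ (see \cite{lambiehanson}), and to verify only that the new requirement $\mathrm{otp}(C) \leq \kappa$ is preserved at each step. Since $\kappa$ is uncountable, any cofinal subset of length at most $\kappa$ automatically satisfies this bound, so the extra constraint never obstructs the standard constructions.

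For (1), given a strictly decreasing sequence $\langle s_n \mid n < \omega \rangle$ in $\bb{S} = \bb{S}^*(\kappa)$, I would let $\gamma = \sup_n \gamma^{s_n}$ and (assuming the sup is not attained) define the natural amalgamation $s$ with $\gamma^s = \gamma$, $\mathcal{C}^s_\alpha$ inherited for $\alpha < \gamma$, and $\mathcal{C}^s_\gamma = \{C\}$ where $C = \{\gamma^{s_n} \mid n < \omega\}$. Then $\mathrm{otp}(C) = \omega \leq \kappa$, and coherence at $\gamma$ is vacuous because $C$ has order type $\omega$ and therefore no accumulation points below $\gamma$.

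For (2), I would describe a winning strategy for Player II in $G_{\kappa+1}(\bb{S})$ in analogy with the $\bb{S}(\lambda)$ case. At each of her even-stage moves, she picks a new top $\gamma^{s_\eta}$ strictly above Player I's last top and equips it with a single new club continuing a coherent thread she has been building through her previous tops. At a limit stage $\delta \leq \kappa$, she plays $s_\delta$ with $\gamma^{s_\delta} = \sup_{\eta < \delta} \gamma^{s_\eta}$ and assigns $\mathcal{C}^{s_\delta}_{\gamma^{s_\delta}} = \{C_\delta\}$, where $C_\delta$ is a cofinal subset of $\gamma^{s_\delta}$ of order type $\mathrm{cf}(\delta) \leq \kappa$ drawn from her prior tops. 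The inductive invariant that each of her earlier tops carried the appropriate initial segment of $C_\delta$ yields coherence, and the order-type bound holds because $\delta \leq \kappa$.

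For (3), once (1) and (2) are in hand, properties (1)--(3) of the $\square_{\kappa,2}$-sequence definition follow directly from the definition of conditions in $\bb{S}$, together with a standard density argument (using $(\kappa+1)$-strategic closure, successor-level extensions, and the regularity of $\kappa^+$) showing that for every $\alpha < \kappa^+$ the set of $s$ with $\gamma^s \geq \alpha$ is dense, so $\bigcup G$ is defined on all of $\kappa^+$. For the non-threading clause (4), I would adapt the argument from the $\bb{S}(\lambda)$ proof: given a name $\dot{D}$ forced to be a thread through $\vec{\mathcal{C}}$, use $(\kappa+1)$-strategic closure together with the tree-like structure of $\bb{S}$ to manufacture three pairwise incompatible extensions of a common condition that force three distinct initial segments of $\dot{D}$ at a common top, contradicting the bound $|\mathcal{C}_\alpha| \leq 2$. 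The main obstacle in the proof is exactly this non-threading argument in (3); the remaining items are bookkeeping to verify that the order-type constraint is preserved, which is automatic whenever the relevant cofinal sequences have length at most $\kappa$.
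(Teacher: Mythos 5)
Your parts (1) and (2) are correct and are exactly the adaptation the paper intends (the paper gives no proof of this proposition, deferring to the analogous arguments for $\bb{S}(\lambda)$ in \cite{lambiehanson}; the only new point is the order-type bound, which, as you observe, is harmless as long as the club added at each new top level has order type at most $\kappa$). One small caution in (2): if at a limit stage $\delta$ you place at the top only a cofinal subset of your earlier tops of order type $\cf(\delta)$, coherence must be checked against what you actually placed at the earlier limit-stage tops; taking the full thread of tops, which has order type at most $\delta \leq \kappa$, is simpler and still respects the bound.

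The genuine gap is in your non-threading argument for clause (4) in part (3). Three pairwise incompatible extensions of a common condition $s$ with a common top $\alpha$ do not produce a contradiction. If $\alpha > \gamma^s$, each extension $r_i$ carries its own $\mathcal{C}^{r_i}_\alpha$ and forces only $\dot{D} \cap \alpha \in \mathcal{C}^{r_i}_\alpha$; the three forced initial segments never need to lie in a single two-element set, so nothing contradicts $|\mathcal{C}_\alpha| \leq 2$. If instead $\alpha \leq \gamma^s$, then $\mathcal{C}_\alpha = \mathcal{C}^s_\alpha$ is already fixed, and any extension of $s$ forcing $\alpha$ to be a limit point of $\dot{D}$ must force $\dot{D} \cap \alpha$ to equal one of its at most two members, so three distinct values cannot be manufactured in the first place. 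The correct argument uses a single decreasing sequence, exactly as in the proof in Section \ref{forcingsect} that $\mathrm{CP}(\mathcal{D})$ fails for the generic covering matrix: below a condition forcing $\dot{D}$ to be a thread, play II's strategy while interleaving extensions that force ordinals $\delta_n \in \dot{D}$ with $\gamma^{p_n} < \delta_n < \gamma^{p_{n+1}}$, so that no $\delta_n$ is among your tops; at the limit $\gamma^*$ you, as the builder, set $\mathcal{C}_{\gamma^*}$ to be the singleton consisting of the thread of your tops. The resulting condition forces $\gamma^*$ to be a limit point of $\dot{D}$, hence forces $\dot{D} \cap \gamma^*$ to be the unique member of $\mathcal{C}_{\gamma^*}$, while simultaneously forcing $\dot{D} \cap \gamma^*$ to contain the $\delta_n$, which that member omits --- a contradiction. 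Note that the bound $2$ plays no role here; what matters is that the builder controls the entire top level of the limit condition.
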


If $\vec{\mathcal{C}} = \langle \mathcal{C}_\alpha \mid \alpha < \kappa^+ \rangle$ is a $\square_{\kappa, 2}$-sequence and $\nu \leq \kappa$ is a regular cardinal, let $\bb{T}^*_\nu(\vec{\mathcal{C}})$ be the poset whose conditions are $t$ such that:
\begin{itemize}
	\item{$t$ is a closed, bounded subset of $\kappa^+$;}
	\item{$\mathrm{otp}(t) < \nu$;}
	\item{for all $\alpha \in t'$, $t \cap \alpha \in \mathcal{C}_\alpha$.}
\end{itemize}
If $t_0, t_1 \in \bb{T}^*_\nu(\vec{\mathcal{C}})$, then $t_1 \leq t_0$ if and only if $t_1$ end-extends $t_0$. The proof of the following is similar to that of Proposition \ref{denseclosed}.

\begin{proposition} \label{denseclosed3}
	Let $\nu \leq \kappa$ be uncountable cardinals, with $\nu$ regular, and let $\bb{S} = \bb{S}^*(\kappa)$. Let $\dot{\vec{\mathcal{C}}}$ be the canonical $\bb{S}$-name for the $\square_{\kappa, 2}$-sequence added by the generic object, and let $\dot{\bb{T}}$ be an $\bb{S}$-name for $\bb{T}^*_\nu(\dot{\vec{\mathcal{C}}})$. Then $\bb{S} * (\dot{\bb{T}} \times \dot{\bb{T}})$ has a dense $\nu$-closed subset. Moreover, if $\dot{\bb{R}}$ is an $\bb{S} * \dot{\bb{T}}$-name for a $\nu$-closed forcing, then $\bb{S} * ((\dot{\bb{T}} * \dot{\bb{R}}) \times (\dot{\bb{T}} * \dot{\bb{R}}))$ has a dense $\nu$-closed subset. 
\end{proposition}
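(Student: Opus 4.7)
The plan is to follow the proof of Proposition~\ref{denseclosed} essentially verbatim, with two points of care: the strategic closure of $\bb{S}$ drops from $\lambda$ to $\kappa+1$, and the threading conditions now carry an order-type bound. Let $\bb{U}$ be the set of $(s, (\dot{t}_0, \dot{t}_1)) \in \bb{S} * (\dot{\bb{T}} \times \dot{\bb{T}})$ such that there exist $t_0, t_1 \in V$ with $s \Vdash_{\bb{S}} ``\dot{t}_0 = t_0 \wedge \dot{t}_1 = t_1"$ and $\gamma^s = \max(t_0) = \max(t_1)$. I claim $\bb{U}$ is a dense $\nu$-closed subset.

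For density, start with $(s', (\dot{t}'_0, \dot{t}'_1))$. Since $\bb{S}$ is $(\kappa+1)$-strategically closed and hence $\kappa^+$-distributive, and $\bb{T}^*_\nu(\vec{\mathcal{C}})$ is forced to have conditions of size at most $\kappa$, we can extend $s'$ to some $s_0$ deciding $\dot{t}'_0$ and $\dot{t}'_1$ as ground-model sets $t'_0, t'_1$. Extend $s_0$ further to some $s$ with $\gamma^s > \max(t'_0), \max(t'_1)$, and set $t_0 = t'_0 \cup \{\gamma^s\}$, $t_1 = t'_1 \cup \{\gamma^s\}$. Adding a single point preserves the order-type bound $<\nu$, and $\gamma^s$ is a successor in each $t_\epsilon$, so the coherence requirement at $\alpha \in t_\epsilon'$ reduces to that of $t'_\epsilon$.

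For $\nu$-closure, suppose $\delta < \nu$ and $\langle (s_\eta, (t_{0,\eta}, t_{1,\eta})) \mid \eta < \delta \rangle$ is strictly decreasing in $\bb{U}$. Set $\gamma = \sup_{\eta < \delta} \gamma^{s_\eta}$, $t'_\epsilon = \bigcup_{\eta<\delta} t_{\epsilon, \eta}$ for $\epsilon < 2$, and $t_\epsilon = t'_\epsilon \cup \{\gamma\}$. The key verification is the order-type constraint: since each $\mathrm{otp}(t_{\epsilon,\eta}) < \nu$, $\delta < \nu$, and $\nu$ is regular, $\mathrm{otp}(t'_\epsilon) < \nu$, and thus $\mathrm{otp}(t_\epsilon) < \nu$; in particular $\mathrm{otp}(t_\epsilon) \le \kappa$ since $\nu \le \kappa$, so these are permitted to enter the $\square_{\kappa,2}$-sequence. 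Define $s$ with $\gamma^s = \gamma$ by taking $\mathcal{C}^s_\alpha = \mathcal{C}^{s_\eta}_\alpha$ for any $\eta$ with $\alpha \leq \gamma^{s_\eta}$ (for $\alpha < \gamma$), and $\mathcal{C}^s_\gamma = \{t'_0, t'_1\}$. Coherence at $\gamma$ holds because each $t'_\epsilon$ is the increasing union of closed initial segments that already satisfied coherence downward, and the coherence requirement at proper limit points $\alpha \in t'_\epsilon$ is inherited from some $t_{\epsilon,\eta}$. Then $(s, (t_0, t_1))$ is the desired lower bound in $\bb{U}$.

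The moreover clause is handled in the same manner: let $\bb{U}^*$ consist of those $(s, ((\dot{t}_0, \dot{r}_0), (\dot{t}_1, \dot{r}_1)))$ such that $(s, (\dot{t}_0, \dot{t}_1)) \in \bb{U}$. Density is obtained by first extending into $\bb{U}$ and then extending the $\dot{r}_\epsilon$ components arbitrarily. For closure under descending $\delta$-sequences with $\delta < \nu$, build the $(s, (t_0, t_1))$ part exactly as above; then, since $(s, (t_0, t_1))$ forces $\dot{\bb{R}}$ to be $\nu$-closed and each coordinate yields a descending $\delta$-sequence in $\dot{\bb{R}}$, take lower bounds $\dot{r}_0, \dot{r}_1$ in the two copies to finish. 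The only real obstacle is the bookkeeping around the order-type bound in the $\nu$-closure step, and regularity of $\nu$ dispatches it cleanly.
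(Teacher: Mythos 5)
Your proof is correct and follows exactly the route the paper intends: the paper gives no separate argument for Proposition \ref{denseclosed3}, stating only that the proof is similar to that of Proposition \ref{denseclosed}, and your adaptation supplies precisely the two new ingredients needed — replacing $\lambda$-distributivity of $\bb{S}$ by the $\kappa^+$-distributivity coming from $(\kappa+1)$-strategic closure, and using regularity of $\nu$ to keep the order types of the threads below $\nu$ (hence $\leq\kappa$) at limit stages. The treatment of the ``moreover'' clause via coordinatewise lower bounds in $\dot{\bb{R}}$ matches the paper's handling of Proposition \ref{denseclosed2}.
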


We now consider successors of regular cardinals.

\begin{theorem}
	Assume GCH. Suppose $\kappa < \lambda$ are regular uncountable cardinals, with $\lambda$ measurable. Let $\bb{P} = \mathrm{Coll}(\kappa, < \lambda)$, and let $\dot{\bb{S}}$ be a $\bb{P}$-name for $\bb{S}^*(\kappa)$. Then, in $V^{\bb{P} * \dot{\bb{S}}}$, $\lambda = \kappa^+$ and $\square_{\kappa, 2}$ and $\mathrm{CP}^*(\lambda)$ both hold.
\end{theorem}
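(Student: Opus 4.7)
The plan is to mirror the inaccessible case of the preceding theorem, with the additional ingredient that the measurability of $\lambda$ must be transported through the Levy collapse $\bb{P}$ as well as through $\bb{S} * \dot{\bb{T}}$. The statements that $\lambda = \kappa^+$ and $\square_{\kappa, 2}$ both hold in $V^{\bb{P} * \dot{\bb{S}}}$ are routine: the former by standard properties of $\mathrm{Coll}(\kappa, <\lambda)$ applied to the inaccessible $\lambda$, the latter by the preceding proposition. So the content of the theorem is the verification of $\mathrm{CP}^*(\lambda)$ in $V[G * H_{\bb{S}}]$. To that end, I fix a regular $\theta$ with $\theta^+ < \kappa^+$ (so $\theta < \kappa$), and, in $V[G * H_{\bb{S}}]$, an arbitrary locally downward coherent $\theta$-covering matrix $\mathcal{D}$ for $\lambda$; set $\nu = \theta^+$ and $\bb{T} = \bb{T}^*_\nu(\vec{\mathcal{C}})$. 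Proposition \ref{denseclosed3} makes $\bb{T} \times \bb{T}$ a $\theta^+$-distributive forcing in $V[G * H_{\bb{S}}]$, so by Lemma \ref{preservationlemma} it suffices to exhibit, in $V[G * H_{\bb{S}} * H_{\bb{T}}]$, a sequence $\langle T_i \mid i < \theta \rangle$ satisfying clauses (1)--(4) of that lemma.

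Exactly as in the inaccessible case, the $T_i$ will be the column at $\lambda$ of the image of $\mathcal{D}$ under a lift of an ultrapower embedding. Let $U$ be a normal measure on $\lambda$ in $V$ and $j : V \to M = \mathrm{Ult}(V, U)$, so that $\mathrm{crit}(j) = \lambda$, $M^\lambda \cap V \subseteq M$, and, by GCH, $|j(\lambda)|^V = \lambda^+$. I would lift $j$ through $\bb{P} * \dot{\bb{S}} * \dot{\bb{T}}$ to obtain, inside $V[G * H_{\bb{S}} * H_{\bb{T}}]$, an elementary embedding
\[
\tilde{j} : V[G * H_{\bb{S}} * H_{\bb{T}}] \to M[G^* * H_{\bb{S}}^* * H_{\bb{T}}^*],
\]
via the standard three-step protocol: first a Silver-style lift through the Levy collapse, using the factorization $j(\bb{P}) \cong \bb{P} \times \mathrm{Coll}(\kappa, [\lambda, j(\lambda)))^M$ together with a bookkeeping construction of a tail generic inside $V[G]$ that invokes GCH and the $\lambda$-closure of $M$; then successive master-condition lifts through $\bb{S}$ and $\bb{T}$, where at stage $\lambda$ one uses the dense $\nu$-closed subset of $\bb{S} * \dot{\bb{T}}$ from Proposition \ref{denseclosed3} to construct a lower bound for $j``(H_{\bb{S}} * H_{\bb{T}})$ in $j(\bb{S} * \dot{\bb{T}})$.

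With $\tilde{j}$ in hand the verification is formally identical to the inaccessible case: $\tilde{j}(\mathcal{D})$ is a locally downward coherent $\theta$-covering matrix for $j(\lambda)$, $\tilde{j}$ fixes $\mathcal{D}(i, \beta)$ and $\gamma_X$ for all $\beta < \lambda$ and $X \in [\lambda]^{\leq \theta}$, and setting $T_i := \tilde{j}(\mathcal{D})(i, \lambda)$ turns clauses (1)--(3) of the covering-matrix axioms at column $\lambda$ of $\tilde{j}(\mathcal{D})$ into clauses (1)--(3) of Lemma \ref{preservationlemma}, while clause (4) follows from local downward coherence of $\tilde{j}(\mathcal{D})$ together with the invariance $\gamma_X^{\tilde{j}(\mathcal{D})} = \gamma_X^{\mathcal{D}}$. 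The main obstacle is the lifting itself, and in particular the master-condition construction at stage $\lambda$: its level-$\lambda$ entry must be a club in $\lambda$ of order type $\kappa$ cohering with $\vec{\mathcal{C}}$, and such a club exists in $M[G^*]$ only because the tail of $G^*$ has collapsed $\lambda$ to cofinality $\kappa$ there. Building this coherent club, and then promoting it to a master condition for $\bb{S} * \dot{\bb{T}}$ living inside the dense $\nu$-closed subset from Proposition \ref{denseclosed3}, is where the real technical care resides; once this is done, the lemma delivers $\mathrm{CP}(\mathcal{D})$ in $V[G*H_{\bb{S}}]$ and the argument is complete.
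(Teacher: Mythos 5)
Your overall architecture (lift an embedding with critical point $\lambda$, read off the new column $\langle T_i \mid i<\theta\rangle$ at $\lambda$ from $j(\mathcal{D})$, and apply Lemma \ref{preservationlemma}) is the paper's, and the endgame you describe is correct. The gap is in the lift through the collapse. You propose to build an $M[G]$-generic filter for the tail $\mathrm{Coll}(\kappa, [\lambda, j(\lambda)))^M$ \emph{inside} $V[G]$ by bookkeeping. This is impossible: that poset is only $\kappa$-closed, while (even under GCH) $M[G]$ has $\lambda^+$-many dense subsets of it to be met, so no diagonalization can be completed; more decisively, if such a generic existed in $V[G]$ then $j$ would lift to an embedding definable in $V[G]$ with critical point $\lambda$, making $\lambda=\kappa^+$ measurable in $V[G]$, which is absurd. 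The paper avoids this entirely via Magidor's absorption theorem: since $\bb{P} * \dot{\bb{S}} * \dot{\bb{T}}$ has a dense $\kappa$-closed subset of size $<j(\lambda)$, one has $j(\bb{P}) \cong \bb{P} * \dot{\bb{S}} * \dot{\bb{T}} * \dot{\bb{R}}$ with $\dot{\bb{R}}$ forced $\kappa$-closed, so the ``tail'' generic is obtained by genuinely forcing with $\bb{T} * \dot{\bb{R}}$ over $V[G*H]$; the lifted embedding then lives only in that further extension, and Lemma \ref{preservationlemma} (with $\bb{Q} = \bb{T} * \dot{\bb{R}}$) is precisely the device that pulls $\mathrm{CP}(\mathcal{D})$ back down to $V[G*H]$.

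Your choice $\nu = \theta^+$, i.e.\ $\bb{T} = \bb{T}^*_{\theta^+}(\vec{\mathcal{C}})$, is also incompatible with the only viable route. The generic thread for $\bb{T}^*_{\theta^+}$ is a cofinal subset of $\lambda$ of order type $\leq\theta^+ < \kappa$, hence a new $\theta^+$-sequence of ordinals; but $j(\bb{P}) = \mathrm{Coll}(\kappa, <j(\lambda))$ is $\kappa$-closed and adds no such sequence, so $\bb{P} * \dot{\bb{S}} * \dot{\bb{T}}^*_{\theta^+}$ cannot be absorbed into $j(\bb{P})$. One must take $\nu = \kappa$: then $\bb{S} * \dot{\bb{T}}$ has a dense $\kappa$-closed subset (Proposition \ref{denseclosed3}), absorption goes through, and the generic thread $E = \bigcup I$ --- which coheres with $\vec{\mathcal{C}}$ by the very design of $\bb{T}$, not because the collapse happens to singularize $\lambda$ --- has order type $\kappa$ and serves directly as $\mathcal{C}^q_\lambda = \{E\}$ in the master condition for $j(\bb{S})$. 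The distributivity needed for Lemma \ref{preservationlemma} is still only $\theta^+$-distributivity of $(\bb{T} * \dot{\bb{R}})^2$, which follows from its $\kappa$-distributivity since $\theta^+ \leq \kappa$, so nothing is lost by taking $\nu$ large. Finally, note that you only need to lift $j$ to $V[G * H_{\bb{S}}]$, not to $V[G*H_{\bb{S}}*H_{\bb{T}}]$; lifting through $\bb{T}$ as well is unnecessary and would demand an additional master condition.
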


\begin{proof}
	Let $G$ be $\bb{S}$-generic over $V$, and let $H$ be $\bb{T}$-generic over $V[G]$. Again, it is immediate that $\lambda = \kappa^+$ and $\square_{\kappa, 2}$ holds in $V[G*H]$, so we need only prove $\mathrm{CP}^*(\lambda)$. Thus, working in $V[G*H]$, let $\theta < \kappa$ be a regular cardinal, and let $\mathcal{D} = \langle D(i, \beta) \mid i < \theta, \beta < \lambda \rangle$ be a $\theta$-covering matrix for $\lambda$. Since $\lambda$ was inaccessible in $V$ and $\bb{P} * \dot{\bb{S}}$ is $\kappa$-distributive, we have $2^\theta < \lambda$ in $V[G*H]$, so Proposition \ref{downwardcoherent} implies that $\mathcal{D}$ is locally downward coherent. Let $\vec{\mathcal{C}} = \langle \mathcal{C}_\alpha \mid \alpha < \lambda \rangle$ be the $\square_{\kappa, 2}$-sequence added by $H$, and let $\bb{T} = \bb{T}^*_\kappa(\vec{\mathcal{C}})$. Let $\dot{\bb{T}}$ be a $\bb{P} * \dot{\bb{S}}$-name for $\bb{T}$.

  Moving back to $V$, let $j:V \rightarrow M$ be an elementary embedding with $\mathrm{crit}(j) = \lambda$. $j(\bb{P}) = \mathrm{Coll}(\kappa, < j(\lambda))$. Since $\bb{P} * \dot{\bb{S}} * \dot{\bb{T}}$ has a $\kappa$-closed dense subset and has size less than $j(\lambda)$, a result of Magidor from \cite{magidor} implies that $j(\bb{P}) \cong \bb{P} * \dot{\bb{S}} * \dot{\bb{T}} * \dot{\bb{R}}$, where $\dot{\bb{R}}$ is a name for a forcing poset that is forced to be $\kappa$-closed. Thus, letting $I$ be $\bb{T}$-generic over $V[G*H]$ and $J$ be $\bb{R}$-generic over $V[G*H*I]$, we can extend $j$ to $j:V[G] \rightarrow M[G*H*I*J]$. 

	In $M[G*H*I*J]$, let $E = \bigcup I$. Note that $E$ is a thread through $\vec{\mathcal{C}}$ of order type $\kappa$. Also, if we define $q = \langle \mathcal{C}^q_\alpha \mid \alpha \leq \lambda \rangle$ by letting $\mathcal{C}^q_\alpha = \mathcal{C}_\alpha$ for $\alpha < \lambda$ and $\mathcal{C}^q_\lambda = \{E\}$, we have $q \in j(\bb{S})$ and $q \leq j(s) = s$ for all $s \in H$. Thus, if we let $H^+$ be $j(\bb{S})$-generic over $V[G*H*I*J]$ with $q \in H^+$, we may extend $j$ to $j:V[G*H] \rightarrow M[G*H*I*J*H^+]$. 

	In $M[G*H*I*J*H^+]$, $j(\mathcal{D})$ is a locally downward coherent $\theta$-covering matrix for $j(\lambda)$. Moreover, $\gamma_X^{\mathcal{D}} = \gamma_X^{j(\mathcal{D})}$ for all $X \in [\lambda]^{\leq \theta}$. Thus, $\langle D(i, \lambda) \mid i < \theta \rangle$ satisfies the conditions in the statement of Lemma \ref{preservationlemma}. Since $j(\bb{S})$ is $j(\lambda)$-distributive in $V[G*H*I*J]$, $\langle D(i, \lambda) \mid i < \theta \rangle \in V[G*H*I*J]$ and satisfies the same conditions there. Moreover, by Proposition \ref{denseclosed3}, $(\bb{T} * \dot{\bb{R}}) \times (\bb{T} * \dot{\bb{R}})$ is $\kappa$-distributive in $V[G*H]$, so, since $\theta^+ \leq \kappa$, we can apply Lemma \ref{preservationlemma} to conclude that $\mathrm{CP}(\mathcal{D})$ holds in $V[G*H]$.
\end{proof}

We finally address successors of singular cardinals. For concreteness, we concentrate on $\aleph_{\omega + 1}$, but similar techniques will work at other cardinals.

\begin{theorem}
	Suppose there are infinitely many supercompact cardinals. Then there is a forcing extension in which $\square_{\aleph_\omega, 2}$ and $\mathrm{CP}^*(\aleph_{\omega + 1})$ both hold.
\end{theorem}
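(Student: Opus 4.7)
The plan is to combine the technique of the preceding theorem with an iterated Levy collapse built from $\omega$ many supercompact cardinals. In place of a single indestructible supercompact $\lambda$ sitting at the target cardinal, I will use a family $\langle \kappa_n \mid n < \omega \rangle$ of supercompacts below what will become $\aleph_\omega$ and, for each regular $\theta = \aleph_n < \aleph_\omega$, derive the reflection needed for $\mathrm{CP}(\aleph_{\omega+1}, \theta)$ from the supercompactness of some $\kappa_m$ with $m$ sufficiently larger than $n$.

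Concretely, start with $\langle \kappa_n \mid n < \omega \rangle$ an increasing sequence of supercompact cardinals, each Laver-prepared for indestructibility under $\kappa_n$-directed closed forcing, and set $\kappa_\omega = \sup_n \kappa_n$. Force with a standard iterated Levy collapse $\bb{P}$ arranged so that, in $V^{\bb{P}}$, $\kappa_n = \aleph_{n+1}$ for each $n$, $\kappa_\omega = \aleph_\omega$, and $\kappa_\omega^+ = \aleph_{\omega+1}$, and so that, for every $m < \omega$, $\bb{P}$ factors as $\bb{P}^{\mathrm{low}}_m * \dot{\bb{P}}^{\mathrm{high}}_m$ with $|\bb{P}^{\mathrm{low}}_m| < \kappa_m$ and $\bb{P}^{\mathrm{high}}_m$ forced to be $\kappa_m$-directed closed. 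Then force over $V^{\bb{P}}$ with the singular-cardinal variant $\bb{S}^{**}$ of $\bb{S}^*(\aleph_\omega)$, in which each $C \in \mathcal{C}_\alpha$ is required only to satisfy $\mathrm{otp}(C) < \aleph_\omega$. The generic sequence $\vec{\mathcal{C}}$ witnesses $\square_{\aleph_\omega, 2}$ in the final model $V[G * H]$.

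For $\mathrm{CP}^*(\aleph_{\omega + 1})$, fix regular $\theta = \aleph_n < \aleph_\omega$ and a $\theta$-covering matrix $\mathcal{D}$ for $\aleph_{\omega+1}$ in $V[G * H]$; Proposition \ref{downwardcoherent} gives local downward coherence. Choose $m$ with $\theta^+ \leq \kappa_m$; by indestructibility, $\kappa_m$ remains supercompact in $V^{\bb{P}^{\mathrm{low}}_m}$. Let $j : V^{\bb{P}^{\mathrm{low}}_m} \to M$ witness enough supercompactness of $\kappa_m$ to ensure $M^{\aleph_{\omega+1}} \subseteq M$, and let $\bb{T} = \bb{T}^{**}_{\kappa_m}(\vec{\mathcal{C}})$ be the threading poset whose conditions are the closed, bounded, thread-coherent subsets of $\aleph_{\omega+1}$ of order type less than $\kappa_m$; the natural analog of Proposition \ref{denseclosed3} makes $\bb{T} \times \bb{T}$ $\kappa_m$-distributive, and hence $\theta^+$-distributive, over $V[G * H]$. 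Lift $j$ through $\bb{P}^{\mathrm{high}}_m * \dot{\bb{S}}^{**} * \dot{\bb{T}}$ via Magidor's factoring lemma, exactly as in the proof of the previous theorem. In the resulting extension, setting $T_i = j(\mathcal{D})(i, \aleph_{\omega+1})$ for $i < \theta$ produces, using $\mathcal{D} \in M$ and the covering matrix properties of $j(\mathcal{D})$ in $M$, a sequence satisfying the hypotheses of Lemma \ref{preservationlemma}, which then yields $\mathrm{CP}(\mathcal{D})$ in $V[G * H]$.

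The principal additional content beyond the successor-of-regular case is verifying clause (3) of Lemma \ref{preservationlemma} for the $T_i$: that $D(i, \beta) \subseteq T_{i'}$ for some $i' < \theta$ whenever $\beta < \aleph_{\omega+1}$ and $i < \theta$. Because $\mathrm{crit}(j) = \kappa_m < \aleph_{\omega+1}$, the $\beta$-column of $j(\mathcal{D})$ for $\beta \geq \kappa_m$ is no longer literally the $\beta$-column of $\mathcal{D}$, as it was in the previous proof with $\mathrm{crit}(j) = \lambda$; instead, one works inside $M$, where $\mathcal{D}$ lives by closure of the embedding, and combines the covering matrix axioms for $\mathcal{D}$ and for $j(\mathcal{D})$ in $M$ to absorb the $\beta$-column of $\mathcal{D}$ into a suitable column of $j(\mathcal{D})$ at index $\aleph_{\omega+1}$. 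This absoluteness step, together with the bookkeeping needed so that $\bb{P}^{\mathrm{high}}_m$ is genuinely $\kappa_m$-directed closed uniformly in $m$, is the real extra work; the rest is a direct transcription of the previous theorem.
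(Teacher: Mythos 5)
Your overall architecture matches the paper's: collapse $\omega$ many supercompacts to the $\aleph_n$'s with a full-support iterated Levy collapse, force the square sequence, and for each $\theta$ lift an embedding with critical point some $\kappa_m > \theta$ through the square and threading posets so as to apply Lemma \ref{preservationlemma}. But the two steps you treat as transcription are exactly where the singular case differs from the successor-of-a-regular case, and as written they do not go through. First, the factoring: no iterated Levy collapse of this shape factors as $\bb{P}^{\mathrm{low}}_m * \dot{\bb{P}}^{\mathrm{high}}_m$ with $|\bb{P}^{\mathrm{low}}_m| < \kappa_m$ and $\bb{P}^{\mathrm{high}}_m$ $\kappa_m$-directed closed, because the block $\mathrm{Coll}(\aleph_m, <\kappa_m)$ that turns $\kappa_m$ into a small cardinal is neither small below $\kappa_m$ nor $\kappa_m$-closed. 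The paper uses a three-piece factoring $\bb{P}_{n^*} * \dot{\bb{Q}}_{n^*} * \dot{\bb{P}}^{n^*+1}$: a small part below the critical point $\kappa^* = \kappa_{n^*+1}$; the single collapse $\bb{Q}_{n^*} = \mathrm{Coll}(\kappa_{n^*}, <\kappa^*)$ at the critical point, whose $j$-image absorbs (via Magidor) the remainder of the iteration together with $\bb{S}$ and $\bb{T}$, the relevant closure degree being $\kappa_{n^*}$, not $\kappa^*$; and the tail above the critical point, for which one must separately produce a master condition, namely a lower bound of $j``G^{n^*+1}$ in $j(\bb{P}^{n^*+1})$, using the $\lambda$-closure of $M$ and the $j(\kappa^*)$-directed closure of $j(\bb{P}^{n^*+1})$. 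That master-condition step has no analogue in the previous theorem (there the critical point is $\lambda$ itself and nothing sits above it), and your sketch omits it entirely.

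Second, the definition of the $T_i$ and clause (3). Setting $T_i = j(\mathcal{D})(i, \aleph_{\omega+1})$ cannot work: the columns of $\mathcal{D}$ and the columns of $j(\mathcal{D})$ are not related by any covering-matrix axiom (axiom (3) only compares columns of one and the same matrix), so there is nothing to ``combine'' that would absorb $D(i,\beta)$ into a column of $j(\mathcal{D})$ at index $\aleph_{\omega+1}$; clause (4) is in equally bad shape for this choice. The paper instead sets $\eta = \sup(j``\lambda)$, noting $\aleph_{\omega+1} < \eta < j(\aleph_{\omega+1})$, and defines $T_i = \{\alpha < \lambda \mid j(\alpha) \in j(\mathcal{D})(i,\eta)\}$. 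Clause (3) then follows by applying $j$ to the column $D(i,\beta)$, which produces the column $j(\mathcal{D})(i, j(\beta))$ with $j(\beta) < \eta$, and invoking axiom (3) inside $j(\mathcal{D})$; clause (4) uses $j(X) = j``X$ and $\gamma^{j(\mathcal{D})}_{j``X} = j(\gamma^{\mathcal{D}}_X) < \eta$. The same ordinal $\eta$ is where the thread $\bigcup_{t \in I} j(t)$ lives, so it is also what makes the condition in $j(\bb{S})$ definable. (A smaller point: requiring $\mathrm{otp}(C) < \aleph_\omega$ rather than $\leq \aleph_\omega$ in your $\bb{S}^{**}$ threatens the $(\aleph_\omega + 1)$-strategic closure of the square forcing, since at stage $\aleph_\omega$ of the game the accumulated top club has order type $\aleph_\omega$; the paper's $\bb{S}^*(\mu)$ permits order type $\leq \mu$ precisely so that this limit stage can be completed.)
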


\begin{proof}
	Let $\langle \kappa_n \mid n < \omega \rangle$ be an increasing sequence of cardinals, with $\kappa_0 = \omega$ and $\kappa_n$ supercompact for all $0 < n < \omega$. Let $\mu = \sup(\{\kappa_n \mid n < \omega\})$, and let $\lambda = \mu^+$. Define a full support forcing iteration $\langle \bb{P}_m, \dot{\bb{Q}}_n \mid m \leq \omega, n < \omega \rangle$ by letting, for all $n < \omega$, $\dot{\bb{Q}}_n$ be a $\bb{P}_n$ name for $\mathrm{Coll}(\kappa_n, < \kappa_{n+1})$. Let $\bb{P} = \bb{P}_\omega$. For all $n < \omega$, let $\dot{\bb{P}}^n$ be a $\bb{P}_n$-name such that $\bb{P} \cong \bb{P}_n * \dot{\bb{P}}^n$. Let $G$ be $\bb{P}$-generic over $V$. For $n < \omega$, let $G_n$ and $G^n$ be the generic filters induced by $G$ on $\bb{P}_n$ and $\bb{P}^n$, respectively, so $V[G] = V[G_n * G^n]$. Standard arguments show that, in $V[G]$, $\kappa_n = \aleph_n$ for all $n < \omega$, $\mu = \aleph_\omega$, and $\lambda = \aleph_{\omega + 1}$.

	In $V[G]$, let $\bb{S} = \bb{S}^*(\mu)$. Let $H$ be $\bb{S}$-generic over $V[G]$. We claim that $V[G*H]$ is our desired model. It is clear that $\square_{\aleph_\omega, 2}$ holds in $V[G]$. Thus, let $\theta < \lambda$ be a regular cardinal, and let $\mathcal{D} = \langle D(i, \beta) \mid i < \theta, \beta < \lambda \rangle$ be a $\theta$-covering matrix for $\lambda$ in $V[G*H]$. $\mu$ is strong limit in $V[G*H]$, so Proposition \ref{downwardcoherent} again implies that $\mathcal{D}$ is locally downward coherent.

  Find $n^* < \omega$ such that $\theta < \kappa_{n^*}$. Let $\kappa^* = \kappa_{n^*+1}$. In $V[G*H]$, let $\vec{\mathcal{C}} = \langle \mathcal{C}_\alpha \mid \alpha < \lambda \rangle$ be the $\square_{\mu, 2}$-sequence added by $H$. Let $\bb{T} = \bb{T}^*_{\kappa^*}(\vec{\mathcal{C}})$, and let $\dot{\bb{T}}$ be a $\bb{P} * \dot{\bb{S}}$-name for $\bb{T}$. In $V[G_{n^*}]$, $\kappa^*$ is supercompact, so we can fix an elementary embedding $j:V[G_{n^*}] \rightarrow M[G_{n^*}]$ such that $\mathrm{crit}(j) = \kappa^*$, $j(\kappa^*) > \lambda$, and ${^\lambda}M[G_{n^*}] \subseteq M[G_{n^*}]$. $j(\bb{Q}_{n^*}) = \mathrm{Coll}(\kappa_{n^*}, < j(\kappa^*))$. Since $\bb{P}^{n^*} * \dot{\bb{S}} * \dot{\bb{T}}$ has a dense, $\kappa^*$-closed subset, we have, again by Magidor's result in \cite{magidor}, that $j(\bb{Q}_{n^*}) \cong \bb{P}^{n^*} * \dot{\bb{S}} * \dot{\bb{T}} * \dot{\bb{R}}$, where $\dot{\bb{R}}$ is forced to be $\kappa_{n^*}$-closed. Thus, letting $I$ be $\bb{T}$-generic over $V[G*H]$ and $J$ be $\bb{R}$-generic over $V[G*H*I]$, we can extend $j$ to $j:V[G_{n^*+1}] \rightarrow M[G*H*I*J]$.

	In $M[G*H*I*J]$, $j(\bb{P}^{n^*+1})$ is $j(\kappa^*)$-directed closed. By standard arguments, $M[G*H*I*J]$ is closed under $\lambda$-sequences. Therefore, $j``G^{n^*+1} \in M[G*H*I*J]$ is a directed subset of $j(\bb{P}^{n^*+1})$ of size $\lambda$, so it has a lower bound, $q$, in $j(\bb{P}^{n^*+1})$. Thus, if we let $G^+$ be $j(\bb{P}^{n^*+1})$-generic over $V[G*H*I*J]$ with $q \in G^+$, we may lift $j$ further to $j:V[G] \rightarrow M[G*H*I*J*G^+]$. Let $\eta = \sup(j``\lambda)$. Letting $E = \bigcup_{t \in I} j(t)$, we see that $E$ is a club in $\eta$ of order type $\kappa^*$. We can then define $q \in j(\bb{S})$, $q = \langle \mathcal{C}^q_\alpha \mid \alpha \leq \eta \rangle$ as follows. For $\alpha < \eta$, find $s \in H$ such that $j(\gamma^s) \geq \alpha$, and let $\mathcal{C}^q_\alpha = \mathcal{C}^{j(s)}_\alpha$. Then, let $\mathcal{C}^q_\eta = \{E\}$. It is easily verified that $q \in j(\bb{S})$ and $q \leq j(s)$ for all $s \in H$. Thus, if we let $H^+$ be $j(\bb{S})$-generic over $V[G*H*I*J*G^+]$ with $q \in H^+$, we can lift $j$ one last time to $j:V[G*H] \rightarrow M[G*H*I*J*G^+ * H^+]$.

  $j(\mathcal{D}) = \langle D^*(i, \beta) \mid i < \theta, \beta < j(\lambda)\rangle$ is a locally downward coherent $\theta$-covering matrix for $j(\lambda)$. For $i < \theta$, let $T_i = \{\alpha < \lambda \mid j(\alpha) \in D^*(i, \eta)\}$. It is routine to verify that $\langle T_i \mid i < \theta \rangle$ satisfies conditions (1)-(3) in the statement of Lemma \ref{preservationlemma} with respect to $\mathcal{D}$. We verify condition (4). To this end, fix $X \in [\lambda]^{\leq \theta}$. Since, in $V[G*H]$, $\bb{T} * \dot{\bb{R}} * j(\bb{P}^{n^*+1}) * j(\bb{S})$ is $\theta^+$-distributive, we have $X \in V[G*H]$. $\mathcal{D}$ is a locally downward coherent covering matrix in $V[G*H]$, so $\gamma_X^\mathcal{D} < \lambda$ exists. Since the critical point of $j$ is greater than $\theta$, we have $j(X) = j``X$, so $\gamma_{j(X)}^{j(\mathcal{D})} = \gamma_{j``X}^{j(\mathcal{D})} = j(\gamma_X^\mathcal{D}) < \eta$. Therefore, for all $i < \theta$, there is $k < \theta$ such that $j``X \cap D^*(i, \eta) \subseteq j``X \cap D^*(k, j(\gamma_X^\mathcal{D}))$. But, for all $i,k < \theta$, $X \cap T_i = j^{-1}[j``X \cap D^*(i, \eta)]$ and $X \cap D(k, \gamma_X^\mathcal{D}) = j^{-1}[j``X \cap D^*(k, j(\gamma_X^\mathcal{D}))]$. Therefore, for all $i < \theta$, there is $k < \theta$ such that $X \cap T_i \subseteq D(k, \gamma_X^\mathcal{D})$, as required.
  
 Since $j(\bb{P}^{n^*+1}) * j(\bb{S})$ is $j(\kappa^*)$-distributive in $M[G*H*I*J]$ and $M[G*H*I*J]$ is closed under $\lambda$-sequences, we actually have $\langle T_i \mid i < \theta \rangle \in V[G*H*I*J]$. By Proposition \ref{denseclosed3}, we also know that, in $V[G*H]$, $((\bb{T} * \dot{\bb{R}}) \times (\bb{T} * \dot{\bb{R}}))$ is $\kappa^*$-distributive, so, since $\theta^+ \leq \kappa^*$, we can apply Lemma \ref{preservationlemma} to conclude that $\mathrm{CP}(\mathcal{D})$ holds in $V[G*H]$. 
\end{proof}

\section{Forcing covering matrices} \label{forcingsect}

In this section, we show how to force the existence of covering matrices satisfying very nice properties but failing to satisfy the reflection properties $\mathrm{CP}$ and $\mathrm{S}$. We first introduce a forcing poset designed to introduce a certain type of covering matrix with very strong coherence properties. Let $\theta < \lambda$ be regular cardinals. $\bb{P}(\theta, \lambda)$ is the forcing poset whose conditions are all $p = \langle D^p(i, \beta) \mid i < \theta, \beta \leq \gamma^p \rangle$ such that:
\begin{enumerate}
	\item{$\gamma^p < \lambda$;}
	\item{for all $i < \theta$ and $\beta \leq \gamma^p$, $D^p(i, \beta)$ is a closed (below $\beta$) subset of $\beta$;}
	\item{for all $i < j < \theta$ and $\beta \leq \gamma^p$, $D^p(i, \beta) \subseteq D^p(j, \beta)$;}
	\item{for all $\beta \leq \gamma^p$, $\beta = \bigcup_{i < \theta} D^p(i, \beta)$;}
	\item{for all $i < \theta$ and $\alpha < \beta \leq \gamma^p$, if $\alpha \in D^p(i, \beta)$, then $D^p(i, \beta) \cap \alpha = D^p(i, \alpha)$;}
	\item{for all $\beta \leq \gamma^p$, there is $i < \theta$ such that $D^p(i, \beta)$ is unbounded in $\beta$.}
\end{enumerate}

If $p, q \in \bb{P}$, then $q \leq p$ iff $q$ end-extends $p$. In what follows, fix regular cardinals $\theta < \lambda$, and let $\bb{P} = \bb{P}(\theta, \lambda)$.

\begin{proposition} \label{directedclosedprop}
	$\bb{P}$ is $\theta$-directed closed.
\end{proposition}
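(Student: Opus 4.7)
The plan is to show every directed $D \subseteq \bb{P}$ of cardinality less than $\theta$ has a lower bound. First observe that $\bb{P}$ is tree-like under end-extension: if $q \leq p_0, p_1$, then $p_0$ and $p_1$ are both initial segments of $q$ and hence comparable. So $D$ is a chain; enumerate it in $\leq$-decreasing order as $\langle p_\eta \mid \eta < \mu \rangle$ with $\mu < \theta$ and $\gamma^{p_\eta}$ strictly increasing in $\eta$. If $\mu$ is a successor, the last element is a lower bound, so assume $\mu$ is a limit. Let $\gamma^* = \sup_{\eta < \mu} \gamma^{p_\eta}$, which is below $\lambda$ by regularity of $\lambda$. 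Passing to a cofinal subsequence, assume $\mu$ is a regular cardinal $\delta = \cf(\gamma^*) < \theta$ and $\langle \gamma^{p_\eta} \mid \eta < \delta \rangle$ is cofinal in $\gamma^*$.

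I will build a lower bound $q$ with $\gamma^q = \gamma^*$. For $\beta < \gamma^*$, set $D^q(i, \beta) := D^{p_\eta}(i, \beta)$ for any $\eta$ with $\beta \leq \gamma^{p_\eta}$; end-extension makes this unambiguous, and conditions (1)--(6) on these levels are inherited directly from the $p_\eta$'s. The substantive step is to define $D^q(i, \gamma^*)$ so as to preserve coherence at the new top level. The key maneuver uses property (4) of the $p_\eta$'s: for each pair $\eta < \eta' < \delta$, pick $i_{\eta, \eta'} < \theta$ with $\gamma^{p_\eta} \in D^{p_{\eta'}}(i_{\eta, \eta'}, \gamma^{p_{\eta'}})$. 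Since $\delta < \theta$ and $\theta$ is regular,
\[
i_* := \sup\bigl\{ i_{\eta, \eta'} \mid \eta < \eta' < \delta \bigr\} < \theta,
\]
and monotonicity in $i$ upgrades this to $\gamma^{p_\eta} \in D^{p_{\eta'}}(i_*, \gamma^{p_{\eta'}})$ for all $\eta < \eta' < \delta$. Then set $D^q(i, \gamma^*) := \emptyset$ for $i < i_*$, and for $i \geq i_*$,
\[
D^q(i, \gamma^*) := \bigcup_{\eta < \delta} D^{p_\eta}(i, \gamma^{p_\eta}).
\]

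The main obstacle is verifying coherence (5) of $q$ at $\gamma^*$, and everything hinges on one observation: for $i \geq i_*$ and $\eta < \eta' < \delta$, applying coherence of $p_{\eta'}$ at the point $\gamma^{p_\eta} \in D^{p_{\eta'}}(i, \gamma^{p_{\eta'}})$ gives
\[
D^{p_{\eta'}}(i, \gamma^{p_{\eta'}}) \cap \gamma^{p_\eta} = D^{p_{\eta'}}(i, \gamma^{p_\eta}) = D^{p_\eta}(i, \gamma^{p_\eta}),
\]
so the sets in the union fit together coherently and in fact $D^q(i, \gamma^*) \cap \gamma^{p_\eta} = D^{p_\eta}(i, \gamma^{p_\eta})$ for every $\eta < \delta$. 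Closedness (2) below $\gamma^*$, monotonicity (3) in $i$, and coherence (5) at $\gamma^*$ then reduce to the corresponding properties of the $p_\eta$'s. Unboundedness (6) at $\gamma^*$ is witnessed by $D^q(i_*, \gamma^*)$, which by the choice of $i_*$ contains the cofinal set $\{\gamma^{p_\eta} \mid \eta < \delta\}$; and property (4) at $\gamma^*$ follows from property (4) for each $p_\eta$ by boosting the index above $i_*$ via monotonicity.
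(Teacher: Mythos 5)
Your proof is correct and follows essentially the same route as the paper's: reduce to chains via tree-likeness, take the supremum $i_*$ of the indices witnessing $\gamma^{p_\eta} \in D^{p_{\eta'}}(\cdot, \gamma^{p_{\eta'}})$, pad the columns below $i_*$ with $\emptyset$, and use coherence property (5) of the conditions to show the top-level sets fit together. The coherence identity $D^q(i,\gamma^*) \cap \gamma^{p_\eta} = D^{p_\eta}(i,\gamma^{p_\eta})$ is exactly the paper's key claim, and the remaining verifications match.
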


\begin{proof}
	First note that $\bb{P}$ is tree-like. It thus suffices to verify that $\bb{P}$ is $\theta$-closed.

	Let $\delta < \theta$ be a limit ordinal, and suppose that $\vec{p} = \langle p_\eta \mid \eta < \delta \rangle$ is a strictly decreasing sequence of conditions from $\bb{P}$, where, for each $\eta < \delta$, $p_\eta = \langle D^\eta(i, \beta) \mid i < \theta, \beta \leq \gamma^\eta \rangle$. Let $\gamma^* = \sup(\{\gamma^\eta \mid \eta < \delta\})$. We will define a condition $q = \langle D^q(i, \beta) \mid i < \theta, \beta \leq \gamma^* \rangle$ that will be a lower bound for $\vec{p}$. For $i < \theta$ and $\beta < \gamma^*$, we simply let $D^q(i, \beta) = D^\eta(i, \beta)$ for some $\eta < \delta$ such that $\beta \leq \gamma^\eta$. It remains to define $D^q(i, \gamma^*)$ for $i < \theta$.

	For all $\eta_0 < \eta_1 < \delta$, let $i(\eta_0, \eta_1) < \theta$ be least such that $\gamma^{\eta_0} \in D^{\eta_1}(i(\eta_0, \eta_1), \gamma^{\eta_1})$. Let $i^* = \sup(\{i(\eta_0, \eta_1) \mid \eta_0 < \eta_1 < \delta \})$. Since $\delta < \theta$, we have $i^* < \theta$. For $i < i^*$, let $D^q(i, \gamma^*) = \emptyset$. For $i^* \leq i < \theta$, let $D^q(i, \gamma^*) = \bigcup_{\eta < \delta} D^\eta(i, \gamma^\eta)$.

	\begin{claim} \label{coherenceclaim}
		For all $\eta < \delta$ and $i^* \leq i < \theta$, $D^q(i, \gamma^*) \cap \gamma^\eta = D^q(i, \gamma^\eta)$.
	\end{claim}

	\begin{proof}
		Fix $\eta < \delta$ and $i^* \leq i < \theta$. By construction, $D^q(i, \gamma^\eta) = D^\eta(i, \gamma^\eta)$. Also by construction, $D^\eta(i, \gamma^\eta) \subseteq D^q(i, \gamma^*)$. We thus only need to show $D^q(i, \gamma^*) \cap \gamma^\eta \subseteq D^\eta(i, \gamma^\eta)$.

		Fix $\alpha \in D^q(i, \gamma^*) \cap \gamma^\eta$. By our definition of $D^q(i, \gamma^*)$, we can find $\eta_0 < \delta$ such that $\alpha \in D^{\eta_0}(i, \gamma^{\eta_0})$. If $\eta_0 = \eta$, we are done. Suppose WLOG that $\eta_0 < \eta$ (the reverse case is similar). $i(\eta_0, \eta) \leq i^* \leq i$, so $\gamma^{\eta_0} \in D^\eta(i, \gamma^\eta)$. By requirement (5) placed on conditions in $\bb{P}$, we have $D^\eta(i, \gamma^\eta) \cap \gamma^{\eta_0} = D^\eta(i, \gamma^{\eta_0})$. Since $p_\eta \leq p_{\eta_0}$, we have $D^\eta(i, \gamma^{\eta_0}) = D^{\eta_0}(i, \gamma^{\eta_0})$. In particular, $\alpha \in D^\eta(i, \gamma^\eta)$, and we are done.
	\end{proof}

	We can now verify that $q$ as we have defined it is in fact a member of $\bb{P}$. Requirements (1), (3), (4), and (6) are immediate. To verify (2), we must show that, for all $i^* \leq i < \theta$, $D^q(i, \gamma^*)$ is closed below $\gamma^*$. Fix such an $i$, and let $X$ be a bounded subset of $D^q(i, \gamma^*)$. Find $\eta < \delta$ such that $\sup(X) < \gamma^\eta$. By Claim \ref{coherenceclaim}, $X \subseteq D^\eta(i, \gamma^\eta)$. Since $D^\eta(i, \gamma^\eta)$ is closed below $\gamma^\eta$, $\sup(X) \in D^\eta(i, \gamma^\eta)$, so $\sup(X) \in D^q(i, \gamma^*)$.

	To verify (5), we must show that, for all $i^* \leq i < \theta$ and all $\alpha < \gamma^*$, if $\alpha \in D^q(i, \gamma^*)$, then $D^q(i, \gamma^*) \cap \alpha = D^q(i, \alpha)$. Fix such an $i$ and an $\alpha \in D^q(i, \gamma^*)$. Find $\eta < \delta$ such that $\alpha < \gamma^\eta$. By Claim \ref{coherenceclaim}, $\alpha \in D^\eta(i, \gamma^\eta)$. By the fact that $p_\eta$ satisfies (5), we have $D^\eta(i, \gamma^\eta) \cap \alpha = D^\eta(i, \alpha)$. Another application of Claim \ref{coherenceclaim} yields that $D^q(i, \gamma^*) \cap \alpha = (D^q(i, \gamma^*) \cap \gamma^\eta) \cap \alpha = D^\eta(i, \gamma^\eta) \cap \alpha = D^\eta(i, \alpha)$. By the construction of $q$, we have $D^q(i, \alpha) = D^\eta(i, \alpha)$, thus finishing the proof.
\end{proof}

\begin{proposition} \label{stratclosedprop}
	$\bb{P}$ is $\lambda$-strategically closed.
\end{proposition}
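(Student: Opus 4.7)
The plan is to exhibit a winning strategy $\sigma$ for Player II in the game $G_\lambda(\bb{P})$. The strategy at successor Player II turns mirrors the key step in the proof of Proposition \ref{directedclosedprop}: given Player I's previous move $p = p_{\beta+1}$ at a Player II turn $\alpha = \beta + 2$, Player II plays $p_\alpha$ with $\gamma^{p_\alpha} = \gamma^p + 1$ and, for every $i < \theta$,
\[
D^{p_\alpha}(i, \gamma^{p_\alpha}) = D^p(i, \gamma^p) \cup \{\gamma^p\}.
\]
A coherence calculation identical in spirit to the one in the proof of Proposition \ref{directedclosedprop} verifies $p_\alpha \in \bb{P}$: for $i < \theta$, $\gamma^p \in D^{p_\alpha}(i, \gamma^{p_\alpha})$ forces $D^{p_\alpha}(i, \gamma^{p_\alpha}) \cap \gamma^p = D^p(i, \gamma^p) = D^{p_\alpha}(i, \gamma^p)$, and property (5) follows as in Claim \ref{coherenceclaim}; properties (2), (3), (4), and (6) are immediate from the successor form of $\gamma^{p_\alpha}$.

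At a Player II limit stage $\alpha < \lambda$, Player II must play a lower bound $q$ of $\langle p_\beta \mid \beta < \alpha \rangle$. Since $\alpha < \lambda$ and $\lambda$ is regular, $\gamma^* := \sup_{\beta < \alpha} \gamma^{p_\beta} < \lambda$. When $\cf(\alpha) < \theta$, $q$ is produced by the $\theta$-directed closedness established in Proposition \ref{directedclosedprop}. When $\cf(\alpha) \geq \theta$, a direct construction is needed. For Player II stages $\beta_0 < \beta_1 < \alpha$, property (4) provides a least $i(\beta_0, \beta_1) < \theta$ with $\gamma^{p_{\beta_0}} \in D^{p_{\beta_1}}(i(\beta_0, \beta_1), \gamma^{p_{\beta_1}})$; property (5) together with the monotonicity of $D^p(i, \cdot)$ in $i$ yields the subadditivity inequality
\[
i(\beta_0, \beta_2) \leq \max(i(\beta_0, \beta_1), i(\beta_1, \beta_2))
\]
for $\beta_0 < \beta_1 < \beta_2$ below $\alpha$. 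Using subadditivity together with the invariant that $\sigma$ maintains at each successor Player II stage $\beta$ (namely $\gamma^{p_{\beta-1}} \in D^{p_\beta}(i, \gamma^{p_\beta})$ for all $i < \theta$), a bookkeeping argument locates a cofinal subsequence $\langle \beta_\xi \mid \xi < \cf(\alpha) \rangle$ of Player II stages and a fixed $i^* < \theta$ satisfying $i(\beta_\xi, \beta_\eta) \leq i^*$ for all $\xi < \eta$.

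With such a subsequence in hand, set $\gamma^q = \gamma^*$, inherit $D^q(i, \eta) = D^{p_\beta}(i, \eta)$ for $\eta < \gamma^*$ coherently from the preceding conditions, and define
\[
D^q(i, \gamma^q) = \begin{cases} \bigcup_{\xi < \cf(\alpha)} D^{p_{\beta_\xi}}(i, \gamma^{p_{\beta_\xi}}) & \text{if } i \geq i^*, \\ \emptyset & \text{if } i < i^*. \end{cases}
\]
Properties (1)--(6) for $q$ follow from the bound on $i$: condition (5) at $\gamma^q$ is the crucial one, and for $\eta \in D^q(i, \gamma^q)$ with $i \geq i^*$, the bound guarantees that $\eta \in D^{p_{\beta_\xi}}(i, \gamma^{p_{\beta_\xi}})$ for cofinally many $\xi$, so property (5) applied to each such $p_{\beta_\xi}$ gives $D^q(i, \gamma^q) \cap \eta = D^q(i, \eta)$. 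The main obstacle is precisely this bookkeeping step at cofinalities $\geq \theta$: under an adversarial Player I, the function $i$ can be pushed up along naively chosen cofinal sequences, and producing the bounded subsequence requires combining the subadditivity of $i$ with the specific form of the successor-stage invariant to defeat Player I's freedom.
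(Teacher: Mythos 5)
There is a genuine gap, and you point directly at it yourself: the ``bookkeeping argument'' at limit stages of cofinality $\geq \theta$ is never supplied, and with the successor-stage strategy you propose it cannot be. Your successor move sets $D^{p_\alpha}(i,\gamma^{p_\alpha}) = D^p(i,\gamma^p) \cup \{\gamma^p\}$ for \emph{every} $i < \theta$, where $p$ is Player I's last move. This guarantees only that the top point of Player I's \emph{immediately preceding} condition lies in every column of your new top row; the top point $\gamma^{p_\beta}$ of your \emph{own} previous move lies in $D^{p_\alpha}(i,\gamma^{p_\alpha})$ only for $i \geq i(\beta,\beta+1)$, and that index is chosen by Player I. So the invariant does not propagate across more than one step. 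Moreover, the one-sided subadditivity $i(\beta_0,\beta_2) \leq \max(i(\beta_0,\beta_1),i(\beta_1,\beta_2))$ is perfectly consistent with Player I arranging $i(\beta_\xi,\beta_{\xi+1}) \geq \xi$ along a play whose limit has cofinality $\theta$; then \emph{every} cofinal subsequence has unbounded connecting indices and no $i^*$ as in your limit construction exists. This is fatal, not merely inconvenient: condition (6) forces some column at $\gamma^*$ to be unbounded in $\gamma^*$, and condition (5) then forces that column to cohere with cofinally many earlier top rows at that one fixed index, so in this situation the sequence has no lower bound at all and Player II loses.

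The paper's proof avoids this by having Player II play defensively at successor stages. Letting $i^*$ be least with $\gamma^\eta \in D^{\eta+1}(i^*,\gamma^{\eta+1})$, Player II defines the columns $i < i^*$ of its new top row as $D^\eta(i,\gamma^\eta)\cup\{\gamma^\eta\}$ --- copied from \emph{its own previous condition's} top row, not from Player I's move --- and only the columns $i \geq i^*$ as $D^{\eta+1}(i,\gamma^{\eta+1})\cup\{\gamma^{\eta+1}\}$. Together with the monotonicity of the columns in $i$, this maintains the inductive invariant that $\gamma^{\eta_0} \in D^{\eta_1}(0,\gamma^{\eta_1})$ for all even stages $\eta_0 < \eta_1$; that is, the connecting index between Player II stages is always $0$, no matter what Player I does. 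At limit stages the union $\bigcup\{D^\eta(i,\gamma^\eta) \mid \eta \text{ even}\}$ then coheres for every $i < \theta$ exactly as in Claim \ref{coherenceclaim} with $i^* = 0$, and no subsequence extraction is needed. If you replace your successor rule with this one, the rest of your outline goes through.
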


\begin{proof}
  We describe a winning strategy for Player II in $G_\lambda(\bb{P})$. Suppose that $0 < \xi < \lambda$, $\xi$ is an even ordinal (this includes limits), and $\langle p_\eta \mid \eta < \xi \rangle$ is a partial play of $G_\lambda(\bb{P})$ with II thus far playing according to the winning strategy to be described here. For $\eta < \xi$, let $p_\eta = \langle D^\eta(i, \beta) \mid i < \theta, \beta \leq \gamma^\eta \rangle$. Assume we have arranged inductively that, for all even ordinals $\eta_0 < \eta_1 < \xi$, we have $\gamma^{\eta_0} \in D^{\eta_1}(0, \gamma^{\eta_1})$.

	Suppose first that $\xi = \eta + 2$ for some even ordinal $\eta < \lambda$. Let $\gamma^\xi = \gamma^{\eta + 1} + 1$. We will define $p_\xi = \langle D^\xi(i, \beta) \mid i < \theta, \beta \leq \gamma^\xi \rangle$. For $i < \theta$ and $\beta < \gamma^\xi$, let $D^\xi(i, \beta) = D^{\eta + 1}(i, \beta)$. Let $i^* < \theta$ be least such that $\gamma^\eta \in D^{\eta + 1}(i, \gamma^{\eta + 1})$. For all $i < i^*$, let $D^\xi(i, \beta) = D^\eta(i, \gamma^\eta) \cup \{\gamma^\eta\}$. For all $i^* \leq i < \theta$, let $D^\xi(i, \beta) = D^{\eta + 1}(i, \gamma^{\eta + 1}) \cup \{\gamma^{\eta + 1}\}$. It is easily verified that $p_\xi \leq p_{\eta + 1}$ and satisfies our additional inductive assumption.

	Finally, suppose that $\xi$ is a limit ordinal. Let $\gamma^\xi = \sup(\{\gamma^\eta \mid \eta < \xi \})$. By the definition of the strategy in the successor case, we know that $\gamma^\eta < \gamma^\xi$ for all $\eta < \xi$. We will define $p_\xi = \langle D^\xi(i, \beta) \mid i < \theta, \beta \leq \gamma^\xi \rangle$. For $i < \theta$ and $\beta < \gamma^\xi$, let $D^\xi(i, \beta) = D^\eta(i, \beta)$ for some $\eta < \xi$ such that $\beta \leq \gamma^\eta$. For $i < \theta$, let $D^\xi(i, \gamma^\xi) = \bigcup \{D^\eta(i, \gamma^\eta) \mid \eta < \xi, \eta$ even$\}.$ The verification that $p_\xi \in \bb{P}$ and is a lower bound for $\langle p_\eta \mid \eta < \xi \rangle$ is similar to that in Proposition \ref{directedclosedprop}, using the fact, proved as in Claim \ref{coherenceclaim}, that, for all $i < \theta$ and all even $\eta < \xi$, $D^\xi(i, \gamma^\xi) \cap \gamma^\eta = D^\xi(i, \gamma^\eta)$.
\end{proof}

Thus, forcing with $\bb{P}$ preserves all cardinalities and cofinalities $\leq \lambda$. If $G$ is $\bb{P}$-generic over $V$, then $\mathcal{D} = \bigcup G$ is a closed, uniform, transitive $\theta$-covering matrix for $\lambda$ satisfying the following strong coherence condition:
\[
\mbox{for all }\alpha < \beta < \lambda \mbox{ and all } i < \theta, \mbox{ if } \alpha \in D(i, \beta), \mbox{ then } D(i, \beta) \cap \alpha = D(i, \alpha).
\]
We now show that this generically-added covering matrix $\mathcal{D}$ fails to satisfy $\mathrm{CP}(\mathcal{D})$ and $\mathrm{S}(\mathcal{D})$.

\begin{theorem}
	Let $G$ be $\bb{P}$-generic over $V$, and let $\mathcal{D} = \bigcup G$. Then, in $V[G]$, $\mathrm{CP}(\mathcal{D})$ and $\mathrm{S}(\mathcal{D})$ both fail. 
\end{theorem}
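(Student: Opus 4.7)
Since $\mathcal{D}$ is closed and transitive, Lemma \ref{implicationlemma} makes $\mathrm{CP}(\mathcal{D})$ and $\mathrm{S}(\mathcal{D})$ equivalent, so it suffices to show $\mathrm{CP}(\mathcal{D})$ fails in $V[G]$. Arguing by contradiction, suppose some $p_0 \in \bb{P}$ forces that $\dot{T}$ witnesses $\mathrm{CP}(\mathcal{D})$; the goal is to produce, in $V$, a condition $q \leq p_0$ and a set $X$ of order type $\theta$ such that $q \Vdash X \subseteq \dot{T}$ and $q$ forces that $X$ is not covered by any $D(i, \beta)$.

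The key preliminary reduction is a coherence calculation. Using the strong coherence of the generic $\mathcal{D}$ (if $\alpha \in D(i, \beta)$, then $D(i, \alpha) = D(i, \beta) \cap \alpha$) together with the fact that each $D(i, \beta)$ is closed below $\beta$, I would show: whenever $X$ has order type $\theta$ with $\gamma := \sup X$ and $X \subseteq D(i, \beta)$ for some $i < \theta$ and $\beta < \lambda$, necessarily $X \subseteq D(i, \gamma)$. The case $\beta < \gamma$ is impossible, the case $\beta = \gamma$ is trivial, and for $\beta > \gamma$ closedness forces $\gamma \in D(i, \beta)$, after which coherence delivers the claim. Consequently, it will be enough to construct $q$ and $X$ with $\gamma^q = \sup X$ and $X \not\subseteq D^q(i, \gamma^q)$ for every $i < \theta$, since any putative covering in $V[G]$ would then violate the observation above.

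I would then recursively build a decreasing sequence $\langle p_\xi \mid \xi \leq \theta \rangle$ of conditions below $p_0$ and an increasing sequence $\langle \alpha_\xi \mid \xi < \theta \rangle$ of ordinals satisfying, at each successor step $\xi + 1$: (a) $p_{\xi+1} \Vdash \alpha_\xi \in \dot{T}$ with $\gamma^{p_\xi} < \alpha_\xi < \gamma^{p_{\xi+1}}$, obtained from the forced unboundedness of $\dot{T}$; (b) $\gamma^{p_\xi} \in D^{p_{\xi+1}}(0, \gamma^{p_{\xi+1}})$; and (c) $\alpha_\xi \notin D^{p_{\xi+1}}(\xi, \gamma^{p_{\xi+1}})$. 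Concretely, after locating $\alpha_\xi$ and an intermediate extension forcing $\alpha_\xi \in \dot{T}$, I would extend by a single new top level whose columns $D^{p_{\xi+1}}(j, \gamma^{p_{\xi+1}})$ are chosen to extend $D^{p_\xi}(j, \gamma^{p_\xi}) \cup \{\gamma^{p_\xi}\}$ as dictated by coherence, to place $\alpha_\xi$ only into columns indexed by $j > \xi$, and to be cofinal in $\gamma^{p_{\xi+1}}$. At limit stages $\eta \leq \theta$ I would take the canonical lower bound from Proposition \ref{directedclosedprop} (for $\eta < \theta$) or produced by Player II's winning strategy from Proposition \ref{stratclosedprop} (for $\eta = \theta$); condition (b) ensures that the threshold $i^*$ from that construction is $0$, so that $D^{p_\eta}(j, \gamma^{p_\eta}) = \bigcup_{\xi < \eta} D^{p_\xi}(j, \gamma^{p_\xi})$ for every $j < \theta$.

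Finally, with $q := p_\theta$ and $X := \{\alpha_\xi \mid \xi < \theta\}$, I would verify $\alpha_\xi \notin D^q(\xi, \gamma^q)$ for each $\xi < \theta$: by (b) and induction, $\gamma^{p_{\xi+1}} \in D^{p_\eta}(j, \gamma^{p_\eta})$ for every $j < \theta$ and every $\eta$ with $\xi + 1 < \eta \leq \theta$, so strong coherence gives $D^{p_\eta}(\xi, \gamma^{p_\eta}) \cap \gamma^{p_{\xi+1}} = D^{p_{\xi+1}}(\xi, \gamma^{p_{\xi+1}})$, which by (c) does not contain $\alpha_\xi$. Combined with the preliminary reduction, this yields the desired contradiction. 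The main obstacle I anticipate is the explicit design of the top level of $p_{\xi+1}$ realising (b) and (c) simultaneously: one must juggle closedness below $\gamma^{p_{\xi+1}}$, monotonicity in $j$, the covering identity $\gamma^{p_{\xi+1}} = \bigcup_j D^{p_{\xi+1}}(j, \gamma^{p_{\xi+1}})$, and strong coherence with the already-fixed lower part of the condition. This bookkeeping is routine but delicate and is where the bulk of the technical work lies.
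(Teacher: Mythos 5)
Your plan is correct, and it reaches the contradiction by a genuinely different decomposition than the paper's. You work entirely syntactically: you never analyze the witness $T$ in $V[G]$, but instead reduce ``$X$ is covered somewhere'' to ``$X$ is covered at $\sup X$'' (via closedness plus the strong coherence of the generic matrix) and then run a $\theta$-length diagonalization producing a single condition $q$ with $\gamma^q = \sup X$ and $\alpha_\xi \notin D^q(\xi,\gamma^q)$ for each $\xi$. The paper instead does most of the work semantically in $V[G]$: it first shows (by essentially the same coherence calculation as your preliminary reduction) that the witness $T$ may be taken to be club and that there is a \emph{single} index $i^*$ with $T \cap \alpha \subseteq D(i^*,\alpha)$ for all $\alpha \in T$; it then pulls this back to a condition and needs only an $\omega$-length construction, killing coverage at the new limit point by setting $D^q(i,\gamma^*) = \emptyset$ for all $i \le i^*$ and using that $\gamma^*$ is forced into the club $\dot{T}$. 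What the paper's route buys is brevity at the top of the construction: the successor steps need no care about which columns contain the chosen ordinals, since the single column $i^*$ is simply made empty at the limit. What your route buys is self-containedness (no reduction to a club witness or to a uniform $i^*$) at the price of the bookkeeping you flag: at stage $\xi+1$, after finding $p' \le p_\xi$ forcing $\alpha_\xi \in \dot{T}$, you must split the new top level at $k = \max(\xi, j^*)$, where $j^*$ is least with $\gamma^{p_\xi} \in D^{p'}(j^*,\gamma^{p'})$, setting columns $j \le k$ to $D^{p_\xi}(j,\gamma^{p_\xi}) \cup \{\gamma^{p_\xi}\}$ and columns $j > k$ to $D^{p'}(j,\gamma^{p'}) \cup \{\gamma^{p'}\}$; splitting at $\xi$ itself would break monotonicity in $j$ across the split. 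With that adjustment (which is exactly the successor step of the strategy in Proposition \ref{stratclosedprop}), conditions (b) and (c) are simultaneously realizable and your verification at $q = p_\theta$ goes through.
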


\begin{proof}
	Suppose for sake of contradiction that $\mathrm{CP}(\mathcal{D})$ holds in $V[G]$, as witnessed by $T \subseteq \lambda$. 

	\begin{claim} \label{coverClaim}
		For all $\beta < \lambda$, there is $i_\beta < \theta$ such that $T \cap \beta \subseteq D(i_\beta, \beta)$.
	\end{claim}

	\begin{proof}
		Fix $\beta < \lambda$, and suppose for sake of contradiction that there is no such $i_\beta$. Then, for all $i < \theta$, there is $\alpha_i \in T \cap \beta$ such that $\alpha_i \not\in D(i, \beta)$. Let $X = \{\alpha_i \mid i < \theta\}$. $X \in [T]^{\leq \theta}$, so there are $\gamma < \lambda$ and $j < \theta$ such that $X \subseteq D(j, \gamma)$. By increasing $j$ if necessary, we may also assume that $\beta \in D(j, \gamma)$. But then $X \subseteq D(j, \gamma) \cap \beta = D(j, \beta)$. In particular, $\alpha_j \in D(j, \beta)$, which is a contradiction.
	\end{proof}

	Since $D(i, \beta)$ is closed below $\beta$ for all $i < \theta$ and $\beta < \lambda$, Claim \ref{coverClaim} remains true if we replace $T$ by its closure, so we may in fact assume that $T$ is club in $\lambda$.

	Fix $i^* < \theta$ such that $i_\beta = i^*$ for unboundedly many $\beta < \lambda$

	\begin{claim}
		For all $\alpha \in T$, $T \cap \alpha \subseteq D(i^*, \alpha)$.
	\end{claim}

	\begin{proof}
		Fix $\alpha \in T$, and find $\beta$ such that $\alpha < \beta < \lambda$ and $i_\beta = i^*$. Then $T \cap \beta \subseteq D(i^*, \beta)$. In particular, $\alpha \in D(i^*, \beta)$, so $D(i^*, \beta) \cap \alpha = D(i^*, \alpha)$. But then $T \cap \alpha \subseteq D(i^*, \alpha)$, as desired.
	\end{proof}

	Let $\dot{T}$ be a $\bb{P}$-name for $T$ and $\dot{\mathcal{D}} = \langle \dot{D}(i, \beta) \mid i < \theta, \beta < \lambda \rangle$ be the canonical name for $\mathcal{D}$, and find $p \in G$ forcing the following:
	\begin{itemize}
		\item{$\dot{T}$ is club in $\lambda$;}
		\item{for all $\alpha \in \dot{T}$, $\dot{T} \cap \alpha \subseteq \dot{D}(i^*, \alpha)$.}
	\end{itemize}
	Back in $V$, construct $\langle p_n \mid n < \omega \rangle$, a decreasing sequence from $\bb{P}$, and an increasing sequence $\langle \eta_n \mid n < \omega \rangle$ of ordinals below $\lambda$ such that, letting $p_n = \langle D^n(i, \beta) \mid i < \theta, \beta \leq \gamma^n \rangle$, the following hold:
	\begin{itemize}
		\item{$p_0 = p$;}
		\item{for all $n < \omega$, $p_{n+1} \Vdash ``\eta_n \in \dot{T}"$;}
		\item{for all $n < \omega$, $\gamma^n < \eta_n < \gamma^{n+1}$;}
		\item{for all $n < \omega$, $\gamma^n \in D^{n+1}(0, \gamma^{n+1})$.}
	\end{itemize}
	The construction is straightforward using the winning strategy for II described in Proposition \ref{stratclosedprop}. Let $\gamma^* = \sup(\{\gamma^n \mid n < \omega \}) = \sup(\{\eta_n \mid n < \omega \})$. Define $q = \langle D^q(i, \beta) \mid i < \theta, \beta \leq \gamma^* \rangle$ as follows. For $i < \theta$ and $\beta < \gamma^*$, let $D^q(i, \beta) = D^n(i, \beta)$ for some $n < \omega$ such that $\beta \leq \gamma^n$. For $i \leq i^*$, let $D^q(i, \gamma^*) = \emptyset$. For $i^* < i < \theta$, let $D^q(i, \gamma^*) = \bigcup_{n < \omega} D^n(i, \gamma^n)$. 

	It is easily verified as before that $q \in \bb{P}$ and $q$ is a lower bound for $\langle p_n \mid n < \omega \rangle$. Thus, since $p \Vdash ``\dot{T}$ is club,$"$ we have $q \Vdash ``\gamma^* \in \dot{T}."$ Therefore, $q \Vdash ``\dot{T} \cap \gamma^* \subseteq \dot{D}(i^*, \gamma^*)"$. However, $q \Vdash ``\{\eta_n \mid n < \omega\} \subseteq \dot{T}$ and $\dot{D}(i^*, \gamma^*) = \emptyset",$ which is a contradiction. Thus, in $V[G]$, $\mathrm{CP}(\mathcal{D})$ fails. Since $\mathcal{D}$ is transitive, Lemma \ref{implicationlemma} implies that $\mathrm{S}(\mathcal{D})$ fails as well.  
\end{proof}

\section{Normal covering matrices} \label{normalsect}

In this section, we investigate normal covering matrices. We start by noting that $\mathrm{CP}$ places 
limits on the existence of normal covering matrices. We then show that, if $\kappa$ is a regular 
uncountable cardinal and $\square_\kappa$ holds, then there is a normal $\omega$-covering matrix for 
$\kappa^+$. We end by showing that the situation is different for wider covering matrices, as, if 
$\theta < \kappa$ and $\theta$ is regular and uncountable, the existence of a normal $\theta$-covering 
matrix for $\kappa^+$ implies $\kappa^\theta > \kappa$. 

If $\kappa$ is singular and $\cf(\kappa) = \theta$, then there is always a normal $\theta$-covering matrix 
for $\kappa^+$. In fact, we have the following lemma, a proof of which can be found in \cite{sharonviale}.

\begin{lemma}
  Suppose $\kappa$ is a singular cardinal and $\cf(\kappa) = \theta$. Then there is a transitive, uniform, 
  closed $\theta$-covering matrix $\mathcal{D}$ for $\kappa^+$ with $\beta_{\mathcal{D}} = \kappa$.
\end{lemma}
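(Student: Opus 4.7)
The plan is to exploit the singularity of $\kappa$. Since $\cf(\kappa) = \theta$, first fix a strictly increasing cofinal sequence $\langle \kappa_i \mid i < \theta \rangle$ of regular cardinals below $\kappa$ with $\kappa_0 > \theta$. For each limit ordinal $\beta < \kappa^+$, note that $\mu_\beta := \cf(\beta)$ is a regular cardinal $\leq |\beta| \leq \kappa$; since $\kappa$ is singular and $\mu_\beta$ is regular, in fact $\mu_\beta < \kappa$. I would fix a continuous strictly increasing cofinal $f_\beta : \mu_\beta \to \beta$, where the precise choice is dictated by a secondary recursion addressed below. I then define $\mathcal{D}$ by recursion on $\beta$: set $D(i, 0) = \emptyset$, $D(i, \alpha + 1) = D(i, \alpha) \cup \{\alpha\}$, and for limit $\beta$
\[
D(i, \beta) = \bigcup_{\xi < \min(\mu_\beta, \kappa_i)} \bigl( D(i, f_\beta(\xi)) \cup \{f_\beta(\xi)\} \bigr).
\]

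Most of the required properties then come out of standard inductions. Covering, monotonicity in $i$, and upward coherence at $\beta$ follow from the inductive hypothesis applied at $f_\beta(\xi+1)$ for suitable $\xi < \mu_\beta$, since any $\alpha < \beta$ lies below some such $f_\beta(\xi+1)$. Transitivity is handled by cases: if $\alpha \in D(i, \beta)$ equals some $f_\beta(\xi)$, then $D(i, \alpha) \subseteq D(i, \beta)$ is direct from the definition; otherwise $\alpha \in D(i, f_\beta(\xi))$ for some $\xi$ and we apply induction. A routine induction gives $|D(i, \beta)| \leq \kappa_i$, which yields $\beta_\mathcal{D} \leq \kappa$; looking at a limit $\beta$ of cofinality $\theta$ and letting $i \to \theta$ shows $\beta_\mathcal{D} = \kappa$. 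For uniformity, pick $i$ with $\kappa_i > \mu_\beta$: then $\min(\mu_\beta, \kappa_i) = \mu_\beta$ and $D(i, \beta) \supseteq \{f_\beta(\xi) : \xi < \mu_\beta\}$, which is a club in $\beta$.

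The real work is closedness---given a bounded $X \in [D(i, \beta)]^{\leq \theta}$, show $\sup X \in D(i, \beta)$. Since $\min(\mu_\beta, \kappa_i) > \theta$ for the relevant $i$, $\sup X$ lies strictly below some $f_\beta(\xi + 1)$, and one would like to conclude $X \subseteq D(i, f_\beta(\xi+1))$ and apply the inductive closedness at $f_\beta(\xi+1)$. This reduction requires $D(i, \beta) \cap f_\beta(\xi+1) = D(i, f_\beta(\xi+1))$, i.e., the strong coherence condition isolated in Section 3. My plan is therefore to strengthen the inductive hypothesis to include this strong coherence, and in tandem to choose the $f_\beta$'s by a secondary recursion guaranteeing $f_\beta(\xi_1) \in D(i, f_\beta(\xi_2))$ for all $\xi_1 < \xi_2 < \min(\mu_\beta, \kappa_i)$ and all $i < \theta$. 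At each successor step of this secondary recursion one picks $f_\beta(\xi_2)$ large enough to absorb the previously chosen $f_\beta(\xi_1)$'s into $D(i, f_\beta(\xi_2))$; verifying that such ordinals exist is the main obstacle and uses covering axiom (3) at smaller levels together with the strong coherence already in hand inductively.
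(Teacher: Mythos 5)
The paper does not actually prove this lemma; it defers to Sharon and Viale, so your argument has to stand on its own. The skeleton is right: a recursion on $\beta < \kappa^+$ in which the $i$-th column at $\beta$ is assembled from the columns at points of a fixed cofinal subset of $\beta$, truncated at $\kappa_i$, and your verifications of covering, monotonicity in $i$, upward coherence, transitivity, uniformity, and $\beta_{\mathcal{D}} = \kappa$ all go through. But the step you yourself flag as the main obstacle --- closedness --- is not closed by the mechanism you propose. Granting transitivity, your reduction needs, at each successor step of the secondary recursion, an ordinal $\gamma < \beta$ with a prescribed set of size $\leq |\xi_0|$ contained in $D(i_0, \gamma)$ \emph{at the prescribed level} $i_0$ (the least $i$ with $\kappa_i > \xi_0$). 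The covering-matrix axioms give covering only at \emph{some} level $j$, with no control over $j$, and nothing in the recursion below $\beta$ guarantees that any $\gamma$ satisfies $f_\beta(\xi_0) \in D(i_0,\gamma)$ for that particular $i_0$; the set $\{\gamma \mid \alpha \in D(i_0,\gamma)\}$ may perfectly well be empty for a given $\alpha$. Worse, your stated absorption condition also quantifies over limit $\xi_2$, where continuity of $f_\beta$ (which you need so that the range of $f_\beta$ is a club, giving uniformity) forces $f_\beta(\xi_2) = \sup_{\xi < \xi_2} f_\beta(\xi)$ and leaves no freedom to choose; moreover a single ordinal $\gamma$ can arise as such a limit point from $\kappa^+$ many different $\beta$, each demanding that a different set be absorbed into the one set $D(i_0,\gamma)$ of size $\leq \kappa_{i_0}$, so no global coordination of the $f_\beta$'s can meet all these demands.

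The strong coherence condition $D(i,\beta) \cap \alpha = D(i,\alpha)$ for $\alpha \in D(i,\beta)$ that you want to carry as an inductive hypothesis is therefore not a simplification but the whole difficulty in disguise: it is precisely the property that the forcing of Section \ref{forcingsect} is designed to add generically, and your sketch gives no ZFC construction of it compatible with the covering requirement. The way out, and the route of the cited Sharon--Viale argument, is to abandon level control and coherence of columns and instead make each $D(i,\beta)$ closed by fiat: build $D(i,\beta)$ recursively from the columns at points of $e_\beta[\kappa_i]$ for a fixed surjection $e_\beta : \kappa \rightarrow \beta$, and adjoin all suprema (below $\beta$) of its subsets as part of the definition. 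This costs nothing in size, since a set of ordinals of cardinality $\leq \kappa_i$ has at most $\kappa_i$ limit points, so the bound $|D(i,\beta)| \leq \kappa_i$, and hence $\beta_{\mathcal{D}} = \kappa$, survives; the delicate residual point is then the interaction of this closure operation with transitivity, not a coherence condition on columns.
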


We will therefore be interested in this section mainly in $\theta$-covering matrices for $\lambda$ in the cases 
in which $\lambda$ is not the successor of a cardinal of cofinality $\theta$.

The following two lemmas follow from results in \cite{vialecovering} and \cite{sharonviale}. We provide proofs for completeness.

\begin{lemma} \label{largecovering}
	Suppose $\theta < \lambda$ are regular cardinals and $\mathcal{D} = \langle D(i, \beta) \mid i < \theta, \beta < \lambda \rangle$ is a $\theta$-covering matrix for $\lambda$. Suppose that $T \subseteq \lambda$ is unbounded and $\mathcal{D}$ covers $[T]^\theta$. Suppose also that $\kappa < \lambda$ is a cardinal such that either $\kappa \in [\theta, \theta^{+\theta})$ or $\kappa^\theta < \lambda$. Then $\mathcal{D}$ covers $[T]^\kappa$.
\end{lemma}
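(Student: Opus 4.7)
My plan is to split on the two clauses of the hypothesis on $\kappa$. In both cases, given $X \in [T]^\kappa$, the overall strategy is to choose a sufficiently large column $\beta^* < \lambda$ and then find a single row $j^* < \theta$ with $X \subseteq D(j^*, \beta^*)$; property (3) of the covering matrix is what lets me transport covers of $\theta$-sized subsets of $X$ into column $\beta^*$.

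For the case $\kappa^\theta < \lambda$: I would enumerate $[X]^\theta = \{Y_\alpha \mid \alpha < \kappa^\theta\}$ and, using the hypothesis, pick $(i_\alpha, \beta_\alpha)$ with $Y_\alpha \subseteq D(i_\alpha, \beta_\alpha)$. Since $\kappa^\theta < \lambda$ and $\lambda$ is regular, $\beta^* := \sup_\alpha \beta_\alpha < \lambda$. By property (3), choose $j_\alpha < \theta$ with $D(i_\alpha, \beta_\alpha) \subseteq D(j_\alpha, \beta^*)$, so $Y_\alpha \subseteq D(j_\alpha, \beta^*)$. Assuming for contradiction that $X \not\subseteq D(j, \beta^*)$ for every $j < \theta$, pick $x_j \in X \setminus D(j, \beta^*)$ for each $j$, extend $\{x_j \mid j < \theta\}$ to some $Y \in [X]^\theta$ (possible since $\kappa \geq \theta$), and write $Y = Y_\alpha$; then $x_{j_\alpha} \in Y \subseteq D(j_\alpha, \beta^*)$, contradicting the choice of $x_{j_\alpha}$.

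For the case $\kappa \in [\theta, \theta^{+\theta})$: I would write $\kappa = \theta^{+\alpha}$ for some $\alpha < \theta$ and induct on $\alpha$, with the base $\alpha = 0$ given by hypothesis. If $\alpha$ is a limit, then $\kappa$ is singular with $\cf(\kappa) = \cf(\alpha) < \theta$; I decompose $X = \bigcup_{\xi < \cf(\kappa)} X_\xi$ with $\theta \leq |X_\xi| < \kappa$, apply the inductive hypothesis to produce $(i_\xi, \beta_\xi)$ covering $X_\xi$, set $\beta^* = \sup_\xi \beta_\xi < \lambda$ by regularity of $\lambda$, use property (3) to pick $j_\xi$ with $D(i_\xi, \beta_\xi) \subseteq D(j_\xi, \beta^*)$, and then set $j^* = \sup_\xi j_\xi$, which is below $\theta$ by regularity of $\theta$; property (2) then gives $X \subseteq D(j^*, \beta^*)$. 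If $\alpha$ is a successor, write $\kappa = (\kappa')^+$ with $\kappa' = \theta^{+(\alpha-1)}$, enumerate $X = \{x_\eta \mid \eta < \kappa\}$, and set $X_\xi = \{x_\eta \mid \eta < \xi\}$ for $\xi \in [\kappa', \kappa)$, so $|X_\xi| = \kappa'$. The inductive hypothesis yields $(i_\xi, \beta_\xi)$ covering $X_\xi$; since $\kappa < \lambda$, $\beta^* := \sup_\xi \beta_\xi < \lambda$. I transport to column $\beta^*$ via property (3) to obtain $j_\xi < \theta$ with $X_\xi \subseteq D(j_\xi, \beta^*)$, then use pigeonhole (the map $\xi \mapsto j_\xi$ sends the regular cardinal $\kappa > \theta$ into $\theta$) to find $j^* < \theta$ and an unbounded $S \subseteq \kappa$ with $j_\xi = j^*$ for all $\xi \in S$; since $S$ is unbounded, $X = \bigcup_{\xi \in S} X_\xi \subseteq D(j^*, \beta^*)$.

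The main point of care is organizing the two pigeonhole steps in Case 2 correctly: in the singular sub-case the sup of the $j_\xi$'s stays below $\theta$ precisely because $\cf(\kappa) < \theta$, while in the regular sub-case I stabilize $j_\xi$ on an unbounded set using that $\kappa > \theta$ is regular. This is why the bound $\theta^{+\theta}$ is sharp for this argument — the cardinal $\theta^{+\theta}$ itself has cofinality $\theta$, so neither trick applies there.
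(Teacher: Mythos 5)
Your proof is correct and follows essentially the same route as the paper's: in the $\kappa^\theta<\lambda$ case the paper also covers each $Y\in[X]^\theta$, takes the supremum of the resulting columns, and derives a contradiction from a diagonal set $\{\alpha_j \mid j<\theta\}$; in the $\kappa\in[\theta,\theta^{+\theta})$ case the paper likewise writes $X$ as an increasing union of $\cf(\kappa)$ pieces of size $<\kappa$, covers them inductively, and stabilizes the row index. The only cosmetic difference is that the paper handles your two sub-cases in one stroke, observing that $\cf(\kappa)\neq\theta$ guarantees a single $j<\theta$ with $j_\alpha\leq j$ for unboundedly many $\alpha$, which unifies your ``take the sup'' and ``pigeonhole'' steps.
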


\begin{proof}
	Fix $X \in [T]^\kappa$. Assume first that $\kappa \in [\theta, \theta^{+\theta})$. We proceed by induction on such $\kappa$. The case $\kappa = \theta$ is true by hypothesis, so let $\kappa > \theta$ and suppose we have proven the Lemma for all $\mu \in [\theta, \kappa)$. Note that $\cf(\kappa) \not= \theta$. Write $X$ as an increasing union $X = \bigcup_{\alpha < \cf(\kappa)} X_\alpha$ with $|X_\alpha| < \kappa$ for all $\alpha < \cf(\kappa)$. For each $\alpha < \cf(\kappa)$, use the inductive hypothesis to find $\beta_\alpha < \lambda$ and $i_\alpha < \theta$ such that $X_\alpha \subseteq D(i_\alpha, \beta_\alpha)$. Let $\beta = \sup(\{\beta_\alpha \mid \alpha < \cf(\kappa)\})$. For each $\alpha < \cf(\kappa)$, let $j_\alpha < \theta$ be such that $X_\alpha \subseteq D(j_\alpha, \beta)$. Since $\cf(\kappa) \not= \theta$, there is $j \leq \theta$ such that, for unboundedly many $\alpha < \cf(\kappa)$, $j_\alpha \leq j$. Then $X \subseteq D(j, \beta)$.

	Next, suppose $\kappa^\theta < \lambda$. For each $Y \in [X]^\theta$, find $\beta_Y < \lambda$ and $i_Y < \theta$ such that $Y \subseteq D(i_Y, \beta_Y)$. Let $\beta = \sup(\{\beta_Y \mid Y \in [X]^\theta\})$. We claim that there is $j < \theta$ such that $X \subseteq D(j, \beta)$. Suppose not. Then, for all $j < \theta$, there is $\alpha_j \in X \setminus D(j, \beta)$. Let $Y = \{\alpha_j \mid j < \theta\}$. Then $Y \in [X]^\theta$ and, since $\beta \geq \beta_Y$, there is $j$ such that $Y \subseteq D(j, \beta)$. In particular, $\alpha_j \in D(j, \beta)$, which is a contradiction.
\end{proof}

\begin{lemma} \label{nonormal1}
	Suppose $\theta < \lambda$ are regular cardinals and either $\lambda \in (\theta^+, \theta^{+\theta})$ or $\lambda$ is $\theta$-inaccessible. Suppose moreover that $\mathrm{CP}(\lambda, \theta)$ holds. Then there are no normal, locally downward coherent $\theta$-covering matrices for $\lambda$. In particular, if additionally $2^\theta < \lambda$, there are no normal $\theta$-covering matrices for $\lambda$.
\end{lemma}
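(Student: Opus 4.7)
My plan is to suppose for contradiction that $\mathcal{D} = \langle D(i,\beta) \mid i < \theta, \beta < \lambda \rangle$ is a normal, locally downward coherent $\theta$-covering matrix for $\lambda$, and derive a contradiction by producing a set which must be covered by some $D(i,\beta)$ but is too large to fit inside one.

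Set $\mu = \beta_{\mathcal{D}}$, so that $\mu < \lambda$ and $|D(i,\beta)| \leq \mathrm{otp}(D(i,\beta)) < \mu$ for every $i < \theta$ and $\beta < \lambda$. By replacing $\mu$ with $\max(\mu, \theta^+)$ if necessary, I may assume $\mu \geq \theta^+$ (this only weakens the bound on the order types). Now $\mathrm{CP}(\lambda, \theta)$ together with local downward coherence gives an unbounded $T \subseteq \lambda$ such that $\mathcal{D}$ covers $[T]^\theta$. Since $T$ is unbounded in the regular cardinal $\lambda$, $|T| = \lambda$, and in particular $[T]^\mu \neq \emptyset$.

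The key step is to bootstrap from covering $[T]^\theta$ to covering $[T]^\mu$ using Lemma \ref{largecovering}. In the first case, $\lambda \in (\theta^+, \theta^{+\theta})$, so $\mu < \lambda < \theta^{+\theta}$, placing $\mu \in [\theta, \theta^{+\theta})$, and Lemma \ref{largecovering} applies directly. In the second case, $\lambda$ is $\theta$-inaccessible, so from $\mu < \lambda$ we get $\mu^\theta < \lambda$, and again Lemma \ref{largecovering} applies. Either way, $\mathcal{D}$ covers $[T]^\mu$. Picking any $X \in [T]^\mu$ and $i < \theta$, $\beta < \lambda$ with $X \subseteq D(i,\beta)$, we obtain $\mu = |X| \leq |D(i,\beta)| < \mu$, a contradiction.

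For the ``in particular'' clause, note that the extra hypothesis $2^\theta < \lambda$ lets us invoke Proposition \ref{downwardcoherent}(1), which makes every $\theta$-covering matrix for $\lambda$ automatically locally downward coherent; thus the conclusion rules out all normal $\theta$-covering matrices for $\lambda$. There is no real obstacle here — the whole argument is a direct assembly of Lemma \ref{largecovering} and the size bound coming from normality; the only point requiring a moment's care is ensuring that $\mu$ falls into the range where Lemma \ref{largecovering} is applicable, which is handled by the two cases in the hypothesis.
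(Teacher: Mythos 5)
Your overall strategy---extract an unbounded $T$ from $\mathrm{CP}(\lambda,\theta)$ via local downward coherence, bootstrap the covering of $[T]^\theta$ up to larger sets with Lemma \ref{largecovering} (using the two hypotheses on $\lambda$ to feed the two cases of that lemma), and then contradict normality---is exactly the paper's argument, and your handling of the ``in particular'' clause via Proposition \ref{downwardcoherent}(1) is correct. However, the final step has a genuine gap: $\beta_{\mathcal{D}}$ is by definition an ordinal bound on the \emph{order types} $\mathrm{otp}(D(i,\gamma))$, and there is no reason for it to be a cardinal. If it is not, then $[T]^\mu$ with $\mu = \beta_{\mathcal{D}}$ and the chain $\mu = |X| \leq |D(i,\beta)| < \mu$ are not well formed; reading $\mu$ as $|\beta_{\mathcal{D}}|$ instead, the strict inequality $|D(i,\beta)| < |\beta_{\mathcal{D}}|$ can simply fail (if, say, $\beta_{\mathcal{D}} = \kappa + 1$ for a cardinal $\kappa$, then by minimality of $\beta_{\mathcal{D}}$ some $D(i,\gamma)$ has order type $\kappa$ and hence cardinality $|\beta_{\mathcal{D}}|$, so a set of that cardinality can sit inside it). The obvious repair of taking $\mu = |\beta_{\mathcal{D}}|^+$ does not always work either: this cardinal can equal $\lambda$ (e.g.\ when $\lambda = \kappa^+$ and $\beta_{\mathcal{D}} \geq \kappa$), which puts it outside the range $\kappa < \lambda$ required by Lemma \ref{largecovering}.

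The repair is to run the contradiction through order types rather than cardinalities, which is what the paper does, and you already have the needed inequality $\mathrm{otp}(D(i,\beta)) < \beta_{\mathcal{D}}$ in hand. Let $\kappa = |\beta_{\mathcal{D}}|$; since $\lambda > \theta^+$ and each $\gamma < \lambda$ is the increasing union of the $\theta$ sets $D(i,\gamma)$, one gets $\beta_{\mathcal{D}} > \theta^+$ and hence $\theta < \kappa < \lambda$, so $\kappa$ falls under either case of Lemma \ref{largecovering} and $[T]^{\kappa}$ is covered by $\mathcal{D}$. Now choose $X \subseteq T$ with $\mathrm{otp}(X) = \beta_{\mathcal{D}} + 1$ (so $X \in [T]^{\kappa}$, but \emph{not} just ``any'' $X$ of that size). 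If $X \subseteq D(i,\beta)$, then $\beta_{\mathcal{D}} < \mathrm{otp}(X) \leq \mathrm{otp}(D(i,\beta)) < \beta_{\mathcal{D}}$, the desired contradiction. With this one change your argument coincides with the paper's proof.
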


\begin{proof}
	Suppose for sake of contradiction that $\mathcal{D} = \langle D(i, \beta) \mid i < \theta, \beta < \lambda \rangle$ is a locally downward coherent $\theta$-covering matrix for $\lambda$ and $\beta_{\mathcal{D}} < \lambda$. Since $\mathrm{CP}(\mathcal{D})$ holds, there is an unbounded $T \subseteq \lambda$ such that $[T]^\theta$ is covered by $\mathcal{D}$. By Lemma \ref{largecovering}, either hypothesis implies that $[T]^{<\lambda}$ is covered by $\mathcal{D}$. Find $X \in [T]^{<\lambda}$ such that $\mathrm{otp}(X) > \beta_{\mathcal{D}}$. Then there are $i < \theta$ and $\beta < \lambda$ such that $X \subseteq D(i, \beta)$, contradicting $\mathrm{otp}(D(i, \beta)) < \beta_{\mathcal{D}}$.
\end{proof}

\begin{remark}
	In Lemma \ref{largecovering} and Lemma \ref{nonormal1}, the hypotheses ``$\kappa^\theta < \lambda$" and ``$\lambda$ is $\theta$-inaccessible" can be relaxed to ``$\cf([\kappa]^{\leq \theta}, \subseteq) < \lambda$" and ``$\cf([\kappa]^{\leq \theta}, \subseteq) <\lambda$ for all $\kappa < \lambda$," respectively. In the case in which $\theta$ is uncountable, Lemma \ref{nonormal1} is improved by Theorem \ref{nonormal2} below.
\end{remark}

\begin{corollary}
	Suppose PFA holds and $\lambda > \omega_2$ is a regular cardinal that is not the successor of a cardinal of 
  countable cofinality. Then there are no normal $\omega$-covering matrices for $\lambda$.
\end{corollary}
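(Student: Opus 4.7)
The plan is to derive this as a direct application of Lemma \ref{nonormal1} with $\theta = \omega$. The three hypotheses of (the ``in particular'' clause of) that lemma are: $\mathrm{CP}(\lambda, \omega)$, the cardinal-arithmetic side condition that $\lambda$ is $\omega$-inaccessible (or lies in $(\omega_1, \omega_\omega)$), and $2^\omega < \lambda$. The first is exactly the theorem already cited, giving $\mathrm{CP}(\lambda, \omega)$ under PFA for every regular $\lambda > \omega_2$. The third follows at once since PFA forces $2^\omega = \aleph_2$, which is strictly less than $\lambda$. So the only step that requires any work is verifying the $\omega$-inaccessibility of $\lambda$.

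For $\omega$-inaccessibility, I would invoke the classical result (due to Viale) that PFA implies SCH. Fix $\kappa < \lambda$; I need $\kappa^\omega < \lambda$. If $\kappa < \aleph_2$, then $\kappa^\omega \leq 2^\omega = \aleph_2 < \lambda$. If $\cf(\kappa) > \omega$ and $\kappa \geq \aleph_2$, an easy induction using Hausdorff's formula $\aleph_{\alpha+1}^\omega = \aleph_\alpha^\omega \cdot \aleph_{\alpha+1}$, together with $2^\omega = \aleph_2$ and SCH at intermediate singulars of uncountable cofinality, yields $\kappa^\omega = \kappa < \lambda$. If $\cf(\kappa) = \omega$, SCH gives $\kappa^\omega = \kappa^+$; then $\kappa^+ \leq \lambda$ by regularity of $\lambda$, and $\kappa^+ = \lambda$ is precisely ruled out by the hypothesis that $\lambda$ is not the successor of a cardinal of countable cofinality, so $\kappa^+ < \lambda$.

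There is no real obstacle; the argument is a routine unpacking of the hypotheses of Lemma \ref{nonormal1}. The only content is to see that the excluded case ``$\lambda = \kappa^+$ with $\cf(\kappa) = \omega$'' is exactly the single case where $\omega$-inaccessibility could fail under PFA, and that in every remaining configuration the $\omega$th-power bound is strictly below $\lambda$. Once $\omega$-inaccessibility is in hand, Lemma \ref{nonormal1} delivers the conclusion immediately.
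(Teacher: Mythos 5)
Your proof is correct and follows exactly the paper's route: the paper likewise derives the corollary by citing that PFA implies SCH and $2^\omega = \omega_2$, concluding that $\lambda$ is $\omega$-inaccessible with $2^\omega < \lambda$, and then applying Lemma \ref{nonormal1}. Your write-up merely spells out the case analysis behind the $\omega$-inaccessibility claim (including why the excluded case $\lambda = \kappa^+$ with $\cf(\kappa) = \omega$ is the only potential failure), which the paper leaves implicit.
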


\begin{proof}
  Since PFA implies both SCH and $2^\omega = \omega_2$, we have that $\lambda$ is $\omega$-inaccessible and $2^\omega < \lambda$. 
  Therefore, by Lemma \ref{nonormal1}, there are no normal $\omega$-covering matrices for $\lambda$.
\end{proof}

In particular, if $\kappa \geq \omega_2$ and $\cf(\kappa) > \omega$, then PFA implies that there are no normal $\omega$-covering matrices 
for $\kappa^+$. As we will see in Theorem \ref{square_normal_thm}, this contrasts with the situation under $\square_\kappa$. The following lemma will be useful for our constructions. 
For a proof, see \cite{lambiehanson}.

\begin{lemma} \label{otplemma}
	Let $\kappa$ be an infinite, regular cardinal, and let $n < \omega$. Let $\delta < \kappa$, and suppose $\{X_\alpha \mid \alpha < \delta \}$ is such that each $X_\alpha$ is a set of ordinals and $\mathrm{otp}(X_\alpha) < \kappa^n$. If $X = \bigcup_{\alpha < \delta} X_\alpha$, then $\mathrm{otp}(X) < \kappa^n$.
\end{lemma}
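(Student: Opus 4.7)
The plan is to proceed by induction on $n$. The case $n = 0$ is vacuous, since $\kappa^0 = 1$ forces each $X_\alpha$ to be empty. The case $n = 1$ amounts to the observation that $|X_\alpha| < \kappa$ for each $\alpha < \delta$, so the regularity of $\kappa$ together with $\delta < \kappa$ gives $|X| < \kappa$, and hence $\mathrm{otp}(X) < \kappa$.

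For the inductive step, suppose the lemma holds for $n$ and assume toward a contradiction that $\mathrm{otp}(X_\alpha) < \kappa^{n+1}$ for every $\alpha < \delta$ yet $\mathrm{otp}(X) \geq \kappa^{n+1} = \kappa^n \cdot \kappa$. Letting $\sigma$ be the order isomorphism from $\mathrm{otp}(X)$ onto $X$, set $z_\beta = \sigma(\kappa^n \cdot \beta)$ for $\beta < \kappa$ and $I_\beta = X \cap [z_\beta, z_{\beta + 1})$; by construction each $I_\beta$ has order type exactly $\kappa^n$. Writing $I_\beta = \bigcup_{\alpha < \delta} (X_\alpha \cap I_\beta)$ and applying the inductive hypothesis inside $I_\beta$, we obtain for each $\beta < \kappa$ some $\alpha(\beta) < \delta$ with $\mathrm{otp}(X_{\alpha(\beta)} \cap I_\beta) \geq \kappa^n$. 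Since $\delta < \kappa$ and $\kappa$ is regular, the pigeonhole principle yields a fixed $\alpha^* < \delta$ and a set $J \subseteq \kappa$ of size $\kappa$ such that $\alpha(\beta) = \alpha^*$ for all $\beta \in J$. The sets $X_{\alpha^*} \cap I_\beta$ for $\beta \in J$ are pairwise disjoint, arranged in increasing order as $\beta$ ranges over $J$, and each has order type at least $\kappa^n$, so $\mathrm{otp}(X_{\alpha^*}) \geq \kappa^n \cdot \kappa = \kappa^{n+1}$, contradicting our assumption on $X_{\alpha^*}$.

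The conceptual heart of the argument is the block decomposition of $X$ via $\sigma$, which converts the statement about a $\delta$-indexed union into $\kappa$ many local problems that the inductive hypothesis can attack. Once this decomposition is in place, the rest is just pigeonhole together with the standard ordinal-arithmetic fact that $\kappa$ many disjoint, increasingly-arranged pieces, each of order type $\geq \kappa^n$, have union of order type $\geq \kappa^n \cdot \kappa$. I do not foresee any serious obstacle beyond setting up the induction and the block decomposition carefully.
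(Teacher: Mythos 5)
Your proof is correct. Note that this paper does not actually include a proof of Lemma \ref{otplemma} --- it defers to \cite{lambiehanson} --- so there is nothing here to compare against, but your argument (induction on $n$, with the inductive step carried out by cutting $X$ into $\kappa$ consecutive blocks of order type $\kappa^n$, applying the inductive hypothesis in each block, and using regularity plus pigeonhole to find a single $X_{\alpha^*}$ meeting $\kappa$ many blocks in a set of order type $\kappa^n$) is the standard one and is sound; the only cosmetic quibble is that the $n=0$ case is trivially true rather than vacuous, since $X=\emptyset$ there.
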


\begin{theorem} \label{square_normal_thm}
	Suppose $\kappa$ is a regular uncountable cardinal and $\square_\kappa$ holds. Then there is a normal, uniform, downward coherent $\omega$-covering matrix for $\kappa^+$.
\end{theorem}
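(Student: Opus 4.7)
The plan is to construct $\mathcal{D}$ directly from a $\square_\kappa$-sequence $\vec{C} = \langle C_\alpha \mid \alpha < \kappa^+ \rangle$ with $\mathrm{otp}(C_\alpha) \leq \kappa$ for all limit $\alpha$. Extend $\vec{C}$ to successor ordinals by $\bar{C}_{\gamma+1} = \{\gamma\}$ and set $\bar{C}_\alpha = C_\alpha$ for limit $\alpha$. Define the matrix by the recursion $D(0, \beta) = \bar{C}_\beta$ and $D(n+1, \beta) = D(n, \beta) \cup \bigcup\{D(n, \alpha) \mid \alpha \in D(n, \beta)\}$, so that each row is obtained by one more step of closure under passing to $\bar{C}$'s.

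Most of the required properties fall out quickly. The inclusions $D(n, \beta) \subseteq D(n+1, \beta)$ are immediate from the recursion; $\beta = \bigcup_n D(n, \beta)$ follows by induction on $\beta$, using cofinality of $\bar{C}_\beta$ in limit $\beta$ and the recursive formula at successors; and for $\beta < \gamma$, one has $\beta \in D(n_0, \gamma)$ for some $n_0$, from which $D(n, \beta) \subseteq D(n_0 + n + 1, \gamma)$ follows by iterating the recursion. Uniformity is automatic since $D(0, \beta) = C_\beta$ is a club in every limit $\beta$. For normality, I would show by induction on $n$ that $\mathrm{otp}(D(n, \beta)) \leq \kappa^{n+1}$ (ordinal exponentiation): $D(n+1, \beta)$ is a union of at most $\kappa$ sets each of order type at most $\kappa^{n+1}$, so its order type is at most $\kappa^{n+2}$. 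Hence $\beta_{\mathcal{D}} \leq \kappa^\omega < \kappa^+$, using Lemma \ref{otplemma} in the pieces of the analysis where the index set has size strictly less than $\kappa$.

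Downward coherence is the main technical point, and I would prove it by induction on $\beta$ with an inner induction on $n$. The base case $n = 0$ analyzes $\bar{C}_\beta \cap \alpha$: if $\alpha$ is a limit point of $\bar{C}_\beta$, the $\square_\kappa$-coherence gives $\bar{C}_\beta \cap \alpha = \bar{C}_\alpha = D(0, \alpha)$ outright; otherwise, one descends through the finite strictly decreasing chain of successor elements of $\bar{C}_\beta$ below $\alpha$, terminating at either $0$ or a genuine limit point $\gamma^* \in \bar{C}_\beta' \cap \alpha$, and thereby writes $\bar{C}_\beta \cap \alpha$ as a finite union of singletons together with at most one $\bar{C}_{\gamma^*}$, all of which can be absorbed into some $D(m, \alpha)$ by the coverage property and one further step of the recursion. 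In the inductive step, contributions $D(n, \gamma) \cap \alpha$ with $\gamma \in D(n, \beta)$ and $\gamma \leq \alpha$ are handled by noting that once $\gamma \in D(m_0, \alpha)$ the recursion gives $D(n, \gamma) \subseteq D(\max(n, m_0) + 1, \alpha)$, while contributions with $\gamma > \alpha$ are handled by the outer induction on $\beta$ applied to the pair $(\alpha, \gamma)$. The hardest part of the argument will be ensuring that the bounds produced by the outer induction stay uniformly bounded over the up-to-$\kappa$ many $\gamma \in D(n, \beta)$ lying above $\alpha$; I expect this to require strengthening the inductive statement so that the bound depends only on the number of recursive steps needed and the $\square$-theoretic complexity of $\alpha$ inside $\beta$, possibly by recasting the construction in terms of Todorcevic's walk functions $\rho_0, \rho_1, \rho_2$ built from $\vec{C}$, where the required uniformity is encoded directly in the subadditivity of the walk-length function.
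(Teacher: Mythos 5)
There are two genuine gaps here, and they are related. The first is in your normality argument: the claim that a union of at most $\kappa$ sets of ordinals, each of order type at most $\kappa^{n+1}$, has order type at most $\kappa^{n+2}$ is false. A union of $\kappa$ many singletons can be an arbitrary set of ordinals of size $\kappa$, hence can have any order type below $\kappa^+$. Lemma \ref{otplemma}, which you cite, only controls unions of \emph{fewer than} $\kappa$ sets, whereas in your recursion $D(n,\beta)$ can have size exactly $\kappa$ (e.g.\ when $\mathrm{otp}(C_\beta)=\kappa$), and the sets $\bar{C}_\alpha$ for $\alpha$ a \emph{non-limit} point of $C_\beta$ are completely unconstrained by square coherence, so there is no reason the union $\bigcup\{D(n,\alpha) \mid \alpha \in D(n,\beta)\}$ has small order type. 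The paper's construction is built precisely to dodge this: at ordinals $\beta$ of cofinality $\kappa$ it shifts the index, setting $D(0,\beta)=\emptyset$ and $D(i+1,\beta)=\bigcup_{\alpha \in C'_\beta} D(i,\alpha)$, and it carries along as an inductive hypothesis the exact coherence $D(i+1,\beta)\cap\alpha = D(i,\alpha)$ for $\alpha \in C'_\beta$. That coherence (available only because the union is taken over \emph{limit} points of $C_\beta$, where $\square_\kappa$-coherence applies) shows every proper initial segment of $D(i+1,\beta)$ has order type $<\kappa^{i+1}$, whence $\mathrm{otp}(D(i+1,\beta))\le\kappa^{i+1}$; unions over index sets of size $<\kappa$ are handled by Lemma \ref{otplemma}. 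Without some device of this kind your matrix need not be normal at all.

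The second gap is the one you flag yourself: downward coherence. The uniformity obstacle is real and, for your construction, I do not see how to overcome it. Since $\theta=\omega$, downward coherence demands a \emph{single} $j<\omega$ with $D(n,\beta)\cap\alpha \subseteq D(j,\alpha)$, but your inductive step produces one $j_\gamma$ for each of up to $\kappa$ many $\gamma \in D(n,\beta)$ above $\alpha$, and these cannot be amalgamated below $\omega$. Strengthening the induction or invoking walks is not a proof. The paper again avoids the issue structurally: by choosing, at each limit $\beta$, either a short cofinal sequence $\langle\alpha_n\rangle_{n<\omega}$ (when $C'_\beta$ is bounded) or the set $C'_\beta$ itself (when it is unbounded), it arranges that $D(i,\beta)\cap\alpha$ is literally equal to $D(i,\alpha)$ (or $D(i-1,\alpha)$) at the relevant points, so downward coherence is verified by a finite induction along the chosen sequence rather than by aggregating $\kappa$ many separate bounds. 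In short, the exact-coherence invariants (2) and (3) of the paper's recursion are not optional bookkeeping; they are what make both normality and downward coherence provable, and your closure-iteration construction does not supply a substitute for them.
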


\begin{proof}
	Let $\vec{C} = \langle C_\alpha \mid \alpha < \kappa^+ \rangle$ be a $\square_\kappa$-sequence. Let $S_0 = \{\alpha < \kappa^+ \mid \omega \leq \cf(\alpha) < \kappa\}$, and let $S_1 = \{\alpha < \kappa^+ \mid \cf(\alpha) = \kappa\}$.

	We will define $\mathcal{D} = \langle D(i, \beta) \mid i < \omega, \beta < \kappa^+ \rangle$, a normal, uniform, downward coherent $\omega$-covering matrix for $\kappa^+$, by recursion on $\beta$. $\mathcal{D}$ will also satisfy the following stronger requirements:
	\begin{enumerate}
		\item{For all $i < \omega$ and $\beta < \kappa^+$, $\mathrm{otp}(D(i, \beta)) < \kappa^{i+1}$. In particular, we will have $\beta_{\mathcal{D}} = \kappa^\omega$.}
		\item{For all $\alpha < \beta < \kappa^+$ and all $i < \omega$, if $\alpha \in C'_\beta$ and $\beta \in S_0$, then $\alpha \in D(i, \beta)$ and $D(i, \beta) \cap \alpha = D(i, \alpha)$.}
		\item{For all $\alpha < \beta < \kappa^+$ and all $i < \omega$, if $\alpha \in C'_\beta$ and $\beta \in S_1$, then $\alpha \in D(i+1, \beta)$ and $D(i+1, \beta) \cap \alpha = D(i, \alpha)$.}
	\end{enumerate}

	We first deal with successor ordinals. Suppose $\beta < \kappa^+$ and $\langle D(i, \alpha) \mid i < \omega, \alpha \leq \beta \rangle$ has been defined. For all $i < \omega$, let $D(i, \beta + 1) = \{\beta\} \cup D(i, \beta)$. It is easily verified that this satisfies all of the requirements.

	Next, suppose $\beta < \kappa^+$ is a limit ordinal and $\langle D(i, \alpha) \mid i < \omega, \alpha < \beta \rangle$ has been defined. There are three cases.

	\textbf{Case 1:} $\cf(\beta) = \omega$ and $C'_\beta$ is bounded below $\beta$. 

	Let $\langle \alpha_n \mid n < \omega \rangle$ be an increasing sequence of ordinals, cofinal in $\beta$ such that, if $C'_\beta \not= \emptyset$, then $\alpha_0 = \max(C'_\beta)$. For all $i < \omega$, let $D(i, \beta) = \{\alpha_n \mid n < \omega\} \cup D(i, \alpha_0) \cup \bigcup_{n < \omega} (D(i, \alpha_{n+1}) \setminus \alpha_n)$.

	This definition easily satisfies the first two requirements in the definition of a covering matrix and the requirement of uniformity. We first verify that it satisfies condition (1) listed above. For every $i < \omega$, $D(i, \beta)$ is the union of countably many sets, each with order type less than $\kappa^{i+1}$, by the induction hypothesis. Thus, Lemma \ref{otplemma} implies that $\mathrm{otp}(D(i, \beta)) < \kappa^{n+1}$. We next verify condition (2) listed above. Suppose $\alpha \in C'_\beta$. Then $\alpha \leq \alpha_0$. By construction, for all $i < \omega$, $D(i, \beta) \cap \alpha_0 = D(i, \alpha_0)$ and, by the inductive hypothesis, since $\cf(\alpha_0) < \kappa$ and $\alpha \in C'_{\alpha_0}$, we have $D(i, \alpha_0) \cap \alpha = D(i, \alpha)$. Thus, $D(i, \beta) \cap \alpha = D(i, \alpha)$.

	Next, we verify that we have satisfied condition (3) in the definition of a covering matrix. We show by induction on $n < \omega$ that, if $\alpha \leq \alpha_n$ and $i < \omega$, there is $j < \omega$ such that $D(i, \alpha) \subseteq D(j, \beta)$. If $\alpha \leq \alpha_0$, then there is $j < \omega$ such that $D(i, \alpha) \subseteq D(j, \alpha_0)$, and $D(j, \alpha_0) \subseteq D(j, \beta)$, so $D(i, \alpha) \subseteq D(j, \beta)$. For the induction step from $n$ to $n+1$, fix $\alpha \in (\alpha_n, \alpha_{n+1}]$ and $i < \omega$. First, find $i_0 < \omega$ such that $D(i, \alpha) \subseteq D(i_0, \alpha_{n+1})$. Next, by downward coherence, find $i_1 < \omega$ such that $D(i, \alpha) \cap \alpha_n \subseteq D(i_1, \alpha_n)$. Finally, by the induction hypothesis, find $j$ with $i_0 \leq j < \omega$ such that $D(i_1, \alpha_n) \subseteq D(j, \beta)$. Then $D(i, \alpha) \cap \alpha_n \subseteq D(i_1, \alpha_n) \subseteq D(j, \beta)$, and $D(i, \alpha) \setminus \alpha_n \subseteq D(i_0, \alpha_{n+1}) \setminus \alpha_n \subseteq D(j, \beta)$, so $D(i, \alpha) \subseteq D(j, \beta)$.

	Finally, we verify downward coherence. We again proceed by induction on $n < \omega$ and show that, for all $\alpha \leq \alpha_n$ and all $i < \omega$, there is $j < \omega$ such that $D(i, \beta) \cap \alpha \subseteq D(j, \alpha)$. First, fix $\alpha \leq \alpha_0$ and $i < \omega$. By construction, $D(i, \beta) \cap \alpha_0 = D(i, \alpha_0)$. Find $j < \omega$ such that $D(i, \alpha_0) \cap \alpha \subseteq D(j, \alpha)$. Then $D(i, \beta) \cap \alpha \subseteq D(j, \alpha)$. For the induction step from $n$ to $n+1$, fix $\alpha \in (\alpha_n, \alpha_{n+1}]$ and $i < \omega$. First, find $i_0 < \omega$ such that $\alpha_n \in D(i_0, \alpha)$ and $D(i, \alpha_{n+1}) \cap \alpha \subseteq D(i_0, \alpha)$. Next, find $i_1 < \omega$ such that $D(i, \beta) \cap \alpha_n \subseteq D(i_1, \alpha_n)$. Finally, find $j$ with $i_0 \leq j < \omega$ such that $D(i_1, \alpha_n) \subseteq D(j, \alpha)$. Then $D(i, \beta) \cap \alpha_n \subseteq D(i_1, \alpha_n) \subseteq D(j, \alpha)$, and $D(i, \beta) \cap [\alpha_n, \alpha) \subseteq \{\alpha_n\} \cup (D(i, \alpha_{n+1}) \cap \alpha) \subseteq D(j, \alpha)$, so $D(i, \beta) \cap \alpha \subseteq D(j, \alpha)$.

	\textbf{Case 2:} $\cf(\beta) < \kappa$ and $C'_\beta$ is unbounded in $\beta$.

	For all $i < \omega$, let $D(i, \beta) = \bigcup_{\alpha \in C'_\beta} D(i, \alpha)$. Notice that, by our induction hypotheses, this ensures that, for all $i < \omega$, $C'_\beta \subseteq D(i, \beta)$ and, for all $\alpha \in C'_\beta$, $D(i, \beta) \cap \alpha = D(i, \alpha)$. All of the requirements are either trivial or verified as in Case 1.

	\textbf{Case 3:} $\cf(\beta) = \kappa$.

	Let $D(0, \beta) = \emptyset$ and, for all $i < \omega$, $D(i + 1, \beta) = \bigcup_{\alpha \in C'_\beta} D(i, \alpha)$. All of the requirements are trivial or verified as before with the possible exception of the order type requirement. To verify that requirement, note that, for all $\alpha \in C'_\beta$ and all $i < \omega$, $D(i+1, \beta) \cap \alpha = D(i, \alpha)$. Thus, by our induction hypothesis, $D(i+1, \beta)$ is a set of ordinals, all of whose initial segments have order type less than $\kappa^{i+1}$. Therefore, $\mathrm{otp}(D(i+1, \beta)) \leq \kappa^{i+1} < \kappa^{i+2}$, as required.  
\end{proof}

In particular, if $\kappa$ is a regular, uncountable cardinal, then it is consistent that $\kappa^\omega = \kappa$ and there is a normal $\omega$-covering matrix for $\kappa^+$. Contrast this with the following result.

\begin{theorem} \label{nonormal2}
	Suppose $\theta < \kappa$ and $\theta$ is a regular, uncountable cardinal. Suppose moreover that there is normal $\theta$-covering matrix for $\kappa^+$. Then $\kappa^\theta > \kappa$.
\end{theorem}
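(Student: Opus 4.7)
The plan is a proof by contradiction. Assume $\kappa^\theta = \kappa$ and let $\mathcal{D} = \langle D(i, \beta) \mid i < \theta, \beta < \kappa^+ \rangle$ be a normal $\theta$-covering matrix for $\kappa^+$, with $\mu := \beta_{\mathcal{D}} < \kappa^+$. Two preliminary observations set the stage: first, since $2^\theta \leq \kappa^\theta = \kappa < \kappa^+$, Proposition \ref{downwardcoherent}(1) gives that $\mathcal{D}$ is locally downward coherent; second, $|\mu| = \kappa$, since if $|\mu| < \kappa$, then any $\beta < \kappa^+$ with $|\beta| = \kappa$ would give $|\beta| = |\bigcup_{i<\theta} D(i, \beta)| \leq \theta \cdot |\mu| < \kappa$, a contradiction.

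The aim is to prove that $\mathcal{D}$ covers $[\kappa^+]^\theta$. Granting this, Lemma \ref{largecovering} applied with $T = \kappa^+$ and the cardinal $\kappa$ (noting $\kappa^\theta = \kappa < \kappa^+$) yields that $\mathcal{D}$ covers $[\kappa^+]^\kappa$. Picking $X \subseteq \kappa^+$ with $|X| = \kappa$ and $\mathrm{otp}(X) = \mu$, which is possible since $|\mu| = \kappa$ and $\mu < \kappa^+$, any $D(i, \beta) \supseteq X$ would then satisfy $\mathrm{otp}(D(i, \beta)) \geq \mu$, contradicting normality.

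To establish covering of $[\kappa^+]^\theta$, fix $X \in [\kappa^+]^\theta$ and suppose for contradiction that $X$ is uncoverable. Choose $\gamma_X > \sup X$ as in the proof of Proposition \ref{downwardcoherent}, so that the trace $\beta \mapsto \langle X \cap D(i, \beta) \mid i < \theta \rangle$ is constantly $\langle s(i) \mid i < \theta \rangle$ on an unbounded set containing $\gamma_X$; then $X = \bigcup_{i < \theta} s(i)$ and $s(i) \subsetneq X$ for every $i$. A useful reduction: if $X$ has a maximum $x^*$, then $X \setminus \{x^*\}$ is still uncoverable. Indeed, if $X \setminus \{x^*\} \subseteq D(j, \beta)$, local downward coherence yields $k < \theta$ with $X \setminus \{x^*\} \subseteq s(k)$, and letting $i_0$ be the least $i$ with $x^* \in s(i)$, either $k \geq i_0$, forcing $s(k) \supseteq X$ by monotonicity, or $k < i_0$, forcing $s(k) = X \setminus \{x^*\}$ and then, by monotonicity, $s(i_0) \supseteq X$; in either case we contradict the strict inclusion $s(\cdot) \subsetneq X$. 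Iterating max-removal finitely many times (successive maxima strictly decrease) leaves an uncoverable $Z$ with no maximum.

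The main obstacle is to conclude from such a $Z$. The function $f(i) := \min(Z \setminus s^Z(i))$ is non-decreasing in $Z$, and its supremum $\beta^*$ has $\cf(\beta^*) \leq \theta$, but $\beta^*$ may be strictly less than $\sup Z$. When this happens, $W := Z \cap [0, \beta^*]$ remains uncoverable: if $W \subseteq D(j, \beta)$, local downward coherence gives $k$ with $W \subseteq s^Z(k)$, forcing $f(k) > \beta^*$, which contradicts $\sup_i f(i) = \beta^*$. Since $\sup W < \sup Z$, one may iterate this reduction on $W$; the strict descent of supremums terminates in finitely many steps, leaving $Z^*$ uncoverable and cofinal in $\sup Z^*$ with $\cf(\sup Z^*) \leq \theta$. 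The remaining combinatorial step is to exploit $\kappa^\theta = \kappa$: the abstract shape of $Z^*$, obtained by transporting $\langle s^{Z^*}(i) \mid i < \theta \rangle$ through an order-isomorphism $Z^* \cong \mathrm{otp}(Z^*)$, ranges over a set of size at most $2^\theta \leq \kappa$, while there are $\kappa^+$ many candidate $Z^*$'s; by pigeonhole, a $\kappa^+$-sized family shares a common abstract shape, and combining this with coherence condition (3) of the covering matrix should assemble a set of order type at least $\mu$ forced to be covered, violating normality. Identifying this precise combinatorial assembly is the delicate final step.
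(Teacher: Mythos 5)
Your proposal does not close. Everything up to the last paragraph is a sequence of legitimate but inconclusive reductions (stripping maxima, passing to initial segments of uncoverable sets), and the actual content of the proof is deferred to the ``delicate final step,'' which you leave unidentified. This is not a minor loose end: note that nothing in your outline uses the hypothesis that $\theta$ is \emph{uncountable} --- the cardinal arithmetic $2^\theta \leq \kappa^\theta = \kappa$, Proposition \ref{downwardcoherent}, Lemma \ref{largecovering}, and all of your reduction steps apply verbatim to $\theta = \omega$. But for $\theta = \omega$ the theorem is false: by Theorem \ref{square_normal_thm}, $\square_\kappa$ yields a normal $\omega$-covering matrix for $\kappa^+$, consistently with $\kappa^\omega = \kappa$. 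In particular your intermediate goal --- that a normal matrix with $\kappa^\theta = \kappa$ must cover $[\kappa^+]^\theta$ --- fails outright for $\theta = \omega$, so the missing combinatorial step is exactly where uncountability must enter, and you give no indication of how. There is also a structural worry: proving that normality forces full covering of $[\kappa^+]^\theta$ is essentially as hard as proving the theorem itself (since, as you observe, covering plus normality is immediately contradictory), so the strategy risks being circular.

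The paper's proof is entirely different and much shorter. It replaces $\mathcal{D}$ by the matrix $\mathcal{E}$ with $E(0,\beta) = \emptyset$, $E(i+1,\beta) = D(i,\beta)$, and $E(j,\beta) = \bigcup_{i<j} E(i,\beta)$ at limit $j$, so that the columns are continuous at limit indices, and sets $f_\beta(i) = \mathrm{otp}(E(i,\beta))$. For $\alpha < \beta$, the closure points of the map $i \mapsto \min\{j : E(i,\alpha) \subseteq E(j,\beta)\}$ form a club in $\theta$ (this is where regularity and uncountability of $\theta$ are used), and on that club one gets $f_\alpha(j) < f_\beta(j)$; hence the $f_\beta$ are pairwise distinct, giving $\kappa^+$ many functions in ${}^\theta\beta_{\mathcal{E}}$ with $\beta_{\mathcal{E}} < \kappa^+$, i.e.\ $\kappa^\theta \geq \kappa^+$. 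I would recommend abandoning the covering-property route and looking for an argument of this order-type/counting flavor.
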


\begin{proof}
	Suppose $\mathcal{D} = \langle D(i, \beta) \mid i < \theta, \beta < \kappa^+ \rangle$ is a normal $\theta$-covering matrix for $\kappa^+$. Define $\mathcal{E} = \langle E(i, \beta) \mid i < \theta, \beta < \kappa^+ \rangle$ as follows.
	\begin{itemize}
		\item{$E(0, \beta) = \emptyset$ for all $\beta < \kappa^+$.}
		\item{$E(i+1, \beta) = D(i, \beta)$ for all $\beta < \kappa^+$ and $i < \theta$.}
		\item{$E(j, \beta) = \bigcup_{i<j} E(i, \beta) = \bigcup_{i<j} D(i, \beta)$ for all $\beta < \kappa^+$ and limit $j < \theta$.}
	\end{itemize}
	It is easily checked that $\mathcal{E}$ is a $\theta$-covering matrix for $\kappa^+$ and $\beta_{\mathcal{E}} = \beta_{\mathcal{D}}$. 

	For all $\beta < \kappa^+$, define a function $f_\beta : \theta \rightarrow \beta_\mathcal{E}$ by letting $f_\beta(i) = \mathrm{otp}(E(i, \beta))$.

	\begin{claim}
		Let $\alpha < \beta < \kappa^+$. There is a club $C$ in $\theta$ such that, for all $j \in C$, $f_\alpha(j) < f_\beta(j)$.
	\end{claim}

	\begin{proof}
		Let $i^* < \theta$ be least such that $\alpha \in E(i^*, \beta)$. Define a function $g : \theta \rightarrow \theta$ by letting, for all $i < \theta$, $g(i)$ be the least $j < \theta$ such that $E(i, \alpha) \subseteq E(j, \beta)$. Let $C_0 = \{j \in \mathrm{lim}(\theta) \mid g``j \subseteq j \}$, and let $C = C_0 \setminus i^*$. We claim that $C$ is as desired. To see this, fix $j \in C$. As $g``j \subseteq j$, we have that, for all $i < j$, $E(i, \alpha) \subseteq E(j, \beta)$. Since $E(j, \alpha) = \bigcup_{i<j} E(i, \alpha)$, we have $E(j, \alpha) \subseteq E(j, \beta)$. Since $i^* < j$, we have $\alpha \in E(j, \beta)$. Thus, $\mathrm{otp}(E(j, \beta)) \geq \mathrm{otp}(E(j, \alpha)) + 1$, so $f_\alpha(j) < f_\beta(j)$.
	\end{proof}

	In particular, $f_\alpha \not= f_\beta$ for all $\alpha < \beta < \kappa^+$, so ${^\theta} \beta_{\mathcal{E}} \geq \kappa^+$. Since $\beta_\mathcal{E} < \kappa^+$, this implies $\kappa^\theta \geq \kappa^+$.
\end{proof}

We do not in fact know if the hypothesis of Theorem \ref{nonormal2} is consistent if $\cf(\kappa) \neq \theta$ and ask the following question.

\begin{question}
  Is it consistent that there are regular uncountable cardinals $\theta < \lambda$ such that $\lambda$ is not the successor of a cardinal of cofinality $\theta$ 
  and there is a normal $\theta$-covering matrix for $\lambda$?
\end{question}

\section{Good points} \label{goodpointsect}

In this section, 
we introduce the notion of a good point for a covering matrix in analogy with the notion of good points in PCF-theoretic scales. We develop the basic 
theory of good points in covering matrices and use this to prove some non-existence results, focusing particularly on $\theta$-covering matrices 
for $\lambda$ in which $\theta$ and $\lambda$ are relatively close together. We are motivated in particular by the following result of Sharon and 
Viale \cite{sharonviale}, which they use to prove that MM implies the failure of the Chang's conjecture variant $(\aleph_{\omega+1}, \aleph_\omega) 
\twoheadrightarrow (\aleph_2, \aleph_1)$.

\begin{theorem}
  Suppose $(\aleph_{\omega + 1}, \aleph_\omega) \twoheadrightarrow (\aleph_2, \aleph_1)$. Then there is a transitive, uniform, normal 
  $\omega$-covering matrix for $\omega_2$.
\end{theorem}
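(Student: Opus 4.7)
The plan is to use the Chang's conjecture hypothesis to produce coherent ``thin'' elementary substructures of a large $H(\chi)$ and to read the columns of the matrix from them, mirroring in spirit the inductive construction of Theorem \ref{square_normal_thm} but with the role of a $\square_\kappa$-sequence played by a system of Chang witnesses.

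First I would fix a sufficiently large regular cardinal $\chi$, a well-ordering $<_\chi$ of $H(\chi)$, and Skolem functions for the expansion $(H(\chi), \in, <_\chi, \omega_2, \aleph_\omega, \ldots)$. Coding this structure on $\aleph_{\omega+1}$ via the standard bijection between $\aleph_{\omega+1}$ and $H(\aleph_{\omega+1})$ and applying $(\aleph_{\omega+1}, \aleph_\omega) \twoheadrightarrow (\aleph_2, \aleph_1)$, I would obtain, for any prescribed small parameter set, a ``Chang witness'' elementary submodel $M \prec H(\chi)$ with $|M| = \aleph_2$, $\omega_1 \subseteq M$, and $|M \cap \aleph_\omega| = \aleph_1$. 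Since $\omega_2 \subseteq \aleph_\omega$, this gives $|M \cap \omega_2| = \aleph_1$, so $\delta_M := \sup(M \cap \omega_2) < \omega_2$ has cofinality $\omega_1$ and $\mathrm{otp}(M \cap \omega_2) < \omega_2$.

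Next, using the stationary abundance of such $M$, I would build a coherent system of Chang witnesses covering $\omega_2$ in such a way that every $\beta < \omega_2$ is assigned a canonical Chang witness $M_\beta$ containing it (for instance by taking $M_\beta$ to be the $<_\chi$-least Chang witness containing $\beta$ together with a fixed root witness $M^*$). From each $M_\beta$ I would extract an $\omega$-chain of countable Skolem subhulls $M_\beta^0 \subseteq M_\beta^1 \subseteq \cdots$, built from Skolem hulls of $\{\beta\}$ together with increasing countable initial slices of some fixed enumeration of $M_\beta \cap \omega_2$, and set $D(n,\beta)$ equal to the down-closure inside $\beta$ of $M_\beta^n \cap \beta$ together with matrix entries $D(n,\alpha)$ for $\alpha$ in an appropriate cofinal subset. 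The usual cofinality-by-cases recursion (as in the proof of Theorem \ref{square_normal_thm}) then gives $D(n,\beta)$ for all $\beta < \omega_2$, handling successors trivially, $\cf(\beta) = \omega$ via cofinal $\omega$-sequences inside $M_\beta$, and $\cf(\beta) = \omega_1$ by a countable decomposition of a cofinal $\omega_1$-sequence read off from $M_\beta$. Transitivity, uniformity, and the covering axioms then follow from elementarity of the $M_\beta$ together with the uniform Skolem-hull definition.

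The main obstacle is normality: obtaining a single $\beta_{\mathcal{D}} < \omega_2$ that bounds $\mathrm{otp}(D(n,\beta))$ uniformly in $n$ and $\beta$. This is delicate because no nested chain $\langle M_\xi \mid \xi < \omega_2 \rangle$ of Chang witnesses with $\omega_2 \subseteq \bigcup_\xi M_\xi$ can have uniformly bounded values of $\mathrm{otp}(M_\xi \cap \omega_2)$ (such bounds would force $\bigcup_\xi (M_\xi \cap \omega_2)$ to have size $\aleph_1$), so one cannot simply bound order types by $\mathrm{otp}(M_\beta \cap \omega_2)$. Instead, the bound must come from a fixed ``internal'' ordinal computed from the canonical Skolem-hull construction, analogous to the bound $\kappa^\omega$ obtained in Theorem \ref{square_normal_thm}: one sets $\beta_{\mathcal{D}} = \omega_1^\omega$ (or similar) and argues inductively, using Lemma \ref{otplemma} and the uniformity of the Skolem hulls $M_\beta^n$, that every $D(n,\beta)$ has order type less than $\omega_1^{n+1}$. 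The role of Chang's conjecture in this argument is precisely to guarantee that the system $M_\beta$ can be chosen coherently enough for the induction to go through; in particular, Chang's provides the ``uniform thinness'' that replaces the role of the $\square_\kappa$-sequence in the earlier theorem.
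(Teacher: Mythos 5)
Your plan rests on building the matrix locally from a system of Chang witnesses $\langle M_\beta \mid \beta < \omega_2 \rangle$ chosen independently of one another (e.g.\ $<_\chi$-least containing $\beta$), and this is where it breaks down: the Chang hypothesis gives no coherence whatsoever between $M_\alpha$ and $M_\beta$ for $\alpha \neq \beta$, and every hard part of the construction you are imitating from Theorem \ref{square_normal_thm} --- condition (3) of the covering matrix definition (relating $D(i,\alpha)$ to the columns above it), transitivity, and the order-type induction via Lemma \ref{otplemma} that yields normality --- is powered precisely by the coherence of the $\square_\kappa$-sequence, for which you supply no substitute. In particular, there is no reason that $\alpha \in M^n_\beta$ should give $M^n_\alpha \cap \alpha \subseteq M^n_\beta \cap \beta$, so transitivity fails; note that the matrix of Theorem \ref{square_normal_thm} is explicitly \emph{not} transitive (see the remark following Corollary \ref{4_corollary}), whereas transitivity is part of what you must prove here. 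Your normality discussion correctly identifies the obstruction but then only asserts that a bound such as $\omega_1^\omega$ ``must'' emerge from the Skolem-hull construction; no mechanism is given, and none is available without coherence.

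The intended argument (due to Sharon and Viale) is global and much shorter, and its main ingredient is already stated in Section \ref{normalsect}: since $\cf(\aleph_\omega) = \omega$, there is in ZFC a transitive, uniform, closed $\omega$-covering matrix $\mathcal{E} = \langle E(n,\beta) \mid n < \omega, \beta < \aleph_{\omega+1} \rangle$ with $\beta_{\mathcal{E}} = \aleph_\omega$. One takes a \emph{single} $M \prec H(\chi)$ with $\mathcal{E} \in M$, $|M \cap \aleph_{\omega+1}| = \aleph_2$, and $|M \cap \aleph_\omega| = \aleph_1$, as provided by $(\aleph_{\omega+1}, \aleph_\omega) \twoheadrightarrow (\aleph_2, \aleph_1)$, lets $\pi$ be the order isomorphism of a suitable initial segment of $M \cap \aleph_{\omega+1}$ of order type $\omega_2$ onto $\omega_2$, and sets $D(n, \pi(\beta)) = \pi[E(n,\beta) \cap \dom(\pi)]$. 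The covering matrix conditions, transitivity, and uniformity are inherited from $\mathcal{E}$ by elementarity. Normality is the one place the hypothesis enters: for $\beta \in M$, the set $E(n,\beta)$ has order type below $\aleph_\omega$, so an order isomorphism lying in $M$ maps $E(n,\beta) \cap M$ into $M \cap \aleph_\omega$, giving the uniform bound $\mathrm{otp}(D(n,\pi(\beta))) \leq \mathrm{otp}(M \cap \aleph_\omega) < \omega_2$. Chang's conjecture is thus used for a single cardinality computation on one model, not to produce a coherent family of models.
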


We first need some definitions and results from Shelah's PCF theory.

\begin{definition}
	Let $\kappa$ be an infinite cardinal, and let $I$ be an ideal on $\kappa$. If $f,g \in {^\kappa}\mathrm{On}$, then $f <_I g$ if $\{i < \kappa \mid g(i) \leq f(i)\} \in I$. $=_I$ and $\leq_I$ are defined in analogous ways.

	If $D$ is a filter on $\kappa$, then $<_D$ has the same meaning as $<_I$, where $I$ is the dual ideal to $D$.
\end{definition}

\begin{remark}
	The ordering $<^*$ on ${^\kappa}\mathrm{On}$ is the same as $<_I$, where $I$ is the ideal of bounded subsets of $\kappa$.
\end{remark}

\begin{definition}
	Suppose $\kappa$ is an infinite cardinal, $I$ is an ideal on $\kappa$, and $\vec{f} = \langle f_\beta \mid \beta < \delta \rangle$ is a $<_I$-increasing sequence of functions from ${^\kappa}\mathrm{On}$. $g \in {^\kappa}\mathrm{On}$ is an \emph{exact upper bound (eub)} for $\vec{f}$ if:
	\begin{enumerate}
		\item{For all $\beta < \delta$, $f_\beta <_I g$.}
		\item{For all $h \in {^\kappa}\mathrm{On}$, if $h <_I g$, then there is $\beta < \delta$ such that $h <_I f_\beta$.}
	\end{enumerate}
\end{definition}

\begin{definition}
	Suppose $\kappa$ is an infinite cardinal and $I$ is an ideal on $\kappa$. If $\vec{f} = \langle f_\alpha \mid \alpha < \delta \rangle$ and $\vec{g} = \langle g_\beta \mid \beta < \eta \rangle$ are two $<_I$-increasing sequences of functions from ${^\kappa}\mathrm{On}$, then $\vec{f}$ and $\vec{g}$ are \emph{cofinally interleaved modulo $I$} if, for every $\alpha < \delta$, there is $\beta < \eta$ such that $f_\alpha <_I g_\beta$ and, for every $\beta < \eta$, there is $\alpha < \delta$ such that $g_\beta <_I f_\alpha$.
\end{definition}

\begin{remark}
	Let $\kappa$ be an infinite cardinal, and let $I$ be an ideal on $\kappa$. Suppose $\vec{f}$ and $\vec{g}$ are $<_I$-increasing sequences of functions from ${^\kappa}\mathrm{On}$.
	\begin{enumerate}
		\item{If $h_0$ and $h_1$ are both eubs for $\vec{f}$, then $h_0 =_I h_1$.}
		\item{If $\vec{f}$ and $\vec{g}$ are cofinally interleaved modulo $I$ and $h$ is an eub for $\vec{f}$, then $h$ is also an eub for $\vec{g}$.}
	\end{enumerate}
\end{remark}

Shelah's Trichotomy Theorem is one of the foundational results in PCF Theory.

\begin{theorem} (Trichotomy)
	Suppose $\kappa$ is an infinite cardinal, $I$ is an ideal on $\kappa$, $\kappa^+ < \lambda = \cf(\lambda)$, and $\vec{f} = \langle f_\beta \mid \beta < \lambda \rangle$ is a $<_I$-increasing sequence of functions from ${^\kappa}\mathrm{On}$. Then one of the following holds:
	\begin{enumerate}
		\item{(Good) $\vec{f}$ has an eub, $g$, such that, for all $i < \kappa$, $\cf(g(i)) > \kappa$.}
		\item{(Bad) There is an ultrafilter $U$ on $\kappa$ extending the dual filter to $I$ and a sequence $\langle S_i \mid i < \kappa \rangle$ such that $|S_i| \leq \kappa$ for all $i < \kappa$ and there is a subfamily of $\prod_{i<\kappa} S_i$ that is cofinally interleaved with $\vec{f}$ modulo $U$.}
		\item{(Ugly) There is a function $h \in {^\kappa}\mathrm{On}$ such that the sequence of sets $\langle \{i < \kappa \mid f_\beta(i) < h(i) \} \mid \beta < \lambda \rangle$ does not stabilize modulo $I$.}
	\end{enumerate}
\end{theorem}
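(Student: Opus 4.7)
The plan is to assume (Ugly) fails and deduce (Good) or (Bad). The negation of (Ugly) is a strong stabilization property: for every $h \in {}^\kappa\mathrm{On}$, there exist $A_h \subseteq \kappa$ and $\beta_h < \lambda$ such that for all $\beta_h \leq \beta < \lambda$, the set $\{i < \kappa \mid f_\beta(i) < h(i)\}$ agrees with $A_h$ modulo $I$. The strategy is to use this, together with regularity of $\lambda > \kappa^+$, to construct a $\leq_I$-minimal upper bound $g$ for $\vec f$ and then analyze the pointwise cofinalities of $g$.

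First I would show that under $\neg(\text{Ugly})$ there exists a $\leq_I$-minimal upper bound $g$ for $\vec f$. Starting from any upper bound (easy to produce from stabilization plus $\cf(\lambda) > \kappa$), iteratively replace an ub $h$ by the pointwise minimum $\min(h, h')$ whenever $h'$ is another ub with $\{i : h'(i) < h(i)\}$ being $I$-positive. A strictly $\leq_I$-decreasing chain of ubs cannot be too long, since stabilization rules out the (Ugly) behavior at each coordinate; so one obtains an actual minimum $g$ up to $=_I$.

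Next, examine $S = \{i < \kappa \mid \cf(g(i)) \leq \kappa\}$. If $S \in I$, I claim $g$ is an eub (possibly after modifying $g$ on an $I$-null set to arrange $\cf(g(i)) > \kappa$ everywhere). Indeed, given $h <_I g$, the failure of (Ugly) applied to $h$ yields a stable set $A_h$, and the pointwise cofinality condition $\cf(g(i)) > \kappa$ together with regularity of $\lambda > \kappa^+$ lets one find a single $f_\beta$ such that $h <_I f_\beta$: we consider the coordinatewise suprema of the values $f_\beta(i)$ attained below $h(i)$ and use that each of these suprema lies strictly below $g(i)$ by the cofinality hypothesis, then bound things uniformly. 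If instead $S$ is $I$-positive, choose cofinal $S_i \subseteq g(i)$ of size $\leq \kappa$ for $i \in S$ and $S_i = \{g(i)\}$ otherwise. Using that every $f_\beta$ is coordinatewise below $g$ on a stable set, one selects an ultrafilter $U$ on $\kappa$ extending the dual filter to $I$ and concentrating on $S$ such that a subfamily of $\prod_i S_i$ is cofinally interleaved with $\vec f$ modulo $U$, yielding (Bad).

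The hard part is the eub verification in the case $S \in I$. The pointwise cofinality condition $\cf(g(i)) > \kappa$ combined with stabilization must be leveraged to upgrade the conclusion ``$h$ is pointwise below $g$ on a large set and this domination stabilizes" to ``$h <_I f_\beta$ for some fixed $\beta < \lambda$." This is precisely where minimality of $g$, the hypothesis $\cf(\lambda) > \kappa^+$, and the failure of (Ugly) are used together; a diagonalization within $\lambda$ picks up the single $\beta$. Extracting the interleaving ultrafilter in the bad case is analogous: one uses the $I$-positive $S$ and a pigeonhole over the $\leq \kappa$-many cofinal choices at each $i \in S$ to construct $U$ so that the two sequences interleave.
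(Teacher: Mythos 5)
The paper does not actually prove this statement: it is Shelah's Trichotomy Theorem, quoted as a known foundational result of PCF theory, so there is no in-paper proof to compare against. Your skeleton is the standard one from the literature (Shelah, Abraham--Magidor): assume $\neg(\mathrm{Ugly})$, produce a $\leq_I$-least upper bound $g$, and split according to whether $S = \{i : \cf(g(i)) \leq \kappa\}$ is in $I$. But as written the argument has genuine gaps at exactly the points where the theorem is hard.

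First, the construction of the minimal upper bound. Taking pointwise minima at successor steps does give a pointwise non-increasing chain of upper bounds, and each coordinate can strictly drop only finitely often, so a chain with $I$-positive drops at every step has length at most $\kappa$; to that extent your length bound is salvageable. The problem is the limit stages: the process can reach a limit $\delta$ with $g_0 \geq g_1 \geq \cdots$ all upper bounds and no least one yet found, and the pointwise infimum $g_\delta(i) = \min_{\xi < \delta} g_\xi(i)$ need not be an upper bound, because $\{i : f_\beta(i) \geq g_\delta(i)\}$ is a union of $|\delta| \leq \kappa$ many $I$-null sets and $I$ is not assumed $\kappa^+$-complete. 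This is precisely where $\neg(\mathrm{Ugly})$ must be invoked in a nontrivial way (one applies stabilization to $g_\delta$ and to auxiliary functions built as pointwise suprema along a subsequence of $\vec{f}$, and uses $\cf(\lambda) > \kappa^+$ to fix a single index $\beta^*$ above all witnesses before deriving the contradiction); your sentence ``stabilization rules out the (Ugly) behavior at each coordinate'' names the needed hypothesis but is not an argument. Second, in the Bad case the interleaving must go both ways: that each $f_\beta$ is $U$-below some member of $\prod_i S_i$ follows from cofinality of $S_i$ in $g(i)$, but that each relevant member of $\prod_i S_i$ is $U$-below some $f_\beta$ requires the minimality of $g$ (otherwise one shrinks $g$ on a $U$-large set to get a strictly smaller upper bound), and your sketch does not say how $U$ is chosen so that this works. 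The eub verification in the Good case has the same status: ``a diagonalization within $\lambda$ picks up the single $\beta$'' is the conclusion restated rather than a proof. None of this is beyond repair --- all of it is carried out in the standard references --- but as it stands the proposal defers the entire content of the theorem to steps it only names.
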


The following result, which follows easily from Trichotomy, is useful.

\begin{theorem} \label{statGoodThm}
	Suppose $\kappa$ is an infinite cardinal, $I$ is an ideal on $\kappa$, $\kappa^+ < \lambda = \cf(\lambda)$ and $\vec{f} = \langle f_\beta \mid \beta < \lambda \rangle$ is a $<_I$-increasing sequence of functions from ${^\kappa}\mathrm{On}$. Suppose moreover that, for stationarily many $\beta \in S^\lambda_{>\kappa}$, $\langle f_\alpha \mid \alpha < \beta \rangle$ has an eub $g$ such that, for all $i < \kappa$, $\cf(g(i)) > \kappa$. Then $\vec{f}$ has an eub $h$ such that, for all $i < \kappa$, $\cf(h(i)) > \kappa$.
\end{theorem}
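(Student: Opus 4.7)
The plan is to apply the Trichotomy Theorem to $\vec{f}$ and eliminate the Bad and Ugly alternatives, leaving the Good case which supplies the desired eub. Let $T \subseteq S^\lambda_{>\kappa}$ denote the stationary set from the hypothesis, and for each $\beta^* \in T$ write $g_{\beta^*}$ for the good eub of $\langle f_\alpha \mid \alpha < \beta^* \rangle$.

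To rule out Ugly, suppose $h \in {}^\kappa\mathrm{On}$ is a witness, so $A_\beta := \{i < \kappa \mid f_\beta(i) < h(i)\}$ is $\subseteq_I$-decreasing but does not stabilize modulo $I$. Any $<_I$-increasing sequence with an eub $g$ is not Ugly: applying the eub property to $\min(h, g)$ (splitting on whether this equals $g$ mod $I$) shows that the corresponding sets eventually stabilize mod $I$. Hence for each $\beta^* \in T$ there is a least $\alpha(\beta^*) < \beta^*$ such that $A_\beta =_I A_{\alpha(\beta^*)}$ for all $\beta \in [\alpha(\beta^*), \beta^*)$. Fodor applied to the regressive map $\beta^* \mapsto \alpha(\beta^*)$ yields a stationary $T' \subseteq T$ and a fixed $\alpha_0$ on which $\alpha$ is constant; unboundedness of $T'$ then forces $A_\beta =_I A_{\alpha_0}$ for all $\beta \geq \alpha_0$, contradicting Ugly.

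The main obstacle is ruling out Bad. Suppose Bad is witnessed by an ultrafilter $U$, sets $\langle S_i \mid i < \kappa \rangle$ with $|S_i| \leq \kappa$, and $\mathcal{H} \subseteq \prod_i S_i$ cofinally interleaved with $\vec{f}$ modulo $U$. The idea is to reflect this witness down to an initial segment at some $\beta^* \in T$, where it is contradicted by the standard ``Good excludes Bad'' argument. For each $\alpha < \lambda$ pick $h_\alpha \in \mathcal{H}$ with $f_\alpha <_U h_\alpha$ and set $\phi(\alpha) := \min\{\beta \mid h_\alpha <_U f_\beta\}$; since $\vec{f}$ is $<_I$-increasing (hence $<_U$-increasing), $\phi(\alpha) > \alpha$. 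Let $C$ be the club of closure points of $\phi$ and fix $\beta^* \in T \cap C$. Then $\mathcal{H}^{\beta^*} := \{h_\alpha \mid \alpha < \beta^*\}$ is a subfamily of $\prod_i S_i$ cofinally interleaved with $\langle f_\alpha \mid \alpha < \beta^* \rangle$ modulo $U$, exhibiting Bad for the initial segment. But this is impossible: set $g'(i) := \sup(S_i \cap g_{\beta^*}(i))$, which satisfies $g'(i) < g_{\beta^*}(i)$ because $|S_i| \leq \kappa < \cf(g_{\beta^*}(i))$, so $g' <_I g_{\beta^*}$; by the eub property fix $\alpha_1 < \beta^*$ with $g' <_I f_{\alpha_1}$. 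Each $h \in \mathcal{H}^{\beta^*}$ is $<_U$-bounded by some $f_\beta$ with $\beta < \beta^*$, hence by $g_{\beta^*}$, so the set $\{i \mid h(i) < g_{\beta^*}(i)\}$ lies in $U$ and on this set $h(i) \in S_i \cap g_{\beta^*}(i) \leq g'(i)$, giving $h \leq_U g'$; applying this to $h \in \mathcal{H}^{\beta^*}$ with $f_{\alpha_1} <_U h$ yields $g' <_U h \leq_U g'$, a contradiction. With both Bad and Ugly ruled out, Trichotomy delivers the Good case and the desired eub.
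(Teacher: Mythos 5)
Your proof is correct and follows exactly the route the paper intends: the paper states this result without proof, remarking only that it ``follows easily from Trichotomy,'' and your argument is precisely that — apply Trichotomy to $\vec{f}$ and use the stationary set of initial segments with good eubs to refute the Ugly case (via stabilization of the sets $\{i \mid f_\beta(i) < h(i)\}$ plus Fodor) and the Bad case (via reflecting the interleaved family below a closure point in $T$ and the standard $\sup(S_i \cap g(i)) < g(i)$ argument). Both eliminations check out, so nothing further is needed.
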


We now introduce the notion of a good point for a covering matrix.

\begin{definition}
	Let $\theta, \lambda$ be regular cardinals such that $\theta^+ < \lambda$, and let $\mathcal{D} = \langle D(i,\beta) \mid i < \theta, \beta < \lambda \rangle$ be a $\theta$-covering matrix for $\lambda$. If $\beta \in S^{\lambda}_{>\theta}$, then $\beta$ is a \emph{good point for $\mathcal{D}$} if there is an unbounded $A\subseteq \beta$ such that, for every $\gamma < \beta$, there is $\alpha < \beta$ and $i < \theta$ such that $A\cap \gamma \subseteq D(i,\alpha)$. $A_{\mathcal{D}}$ denotes the set of $\beta \in S^\lambda_{>\theta}$ such that $\beta$ is good for $\mathcal{D}$.
\end{definition}

\begin{remark}
  Our definition is motivated by the PCF-theoretic notion of good points in $<^*$-increasing sequences of functions. In particular, if $\mathcal{D}$ in the above definition is a transitive covering matrix, then, as we will see 
  in the proof of Theorem \ref{normalA}, the sequence of 
  functions $\vec{f} = \langle f_\beta \mid \beta < \lambda \rangle$ from ${^\theta}\beta_{\mathcal{D}}$ defined by letting $f_\beta(i) = \mathrm{otp}(D(i, \beta))$ is $<^*$-increasing, 
  and a good point $\beta$ for $\mathcal{D}$ is also a good point for $\vec{f}$ in the PCF-theoretic sense, i.e. there is an eub $g$ for $\langle f_\alpha \mid \alpha < \beta \rangle$ 
  such that $\cf(g(i)) = \cf(\beta)$ for all $i < \theta$. As later results in this section will show, good points for covering matrices are also of interest in cases 
  in which the covering matrix is not necessarily transitive. Just as the analysis of PCF-theoretic good points places limits on the behavior of $<^*$-increasing 
  sequences of functions, our analysis of good points in covering matrices will allow us to prove some non-existence results about covering matrices.
\end{remark}

\begin{theorem} \label{normalA}
	Suppose $\theta$ is a regular cardinal, $1 \leq \eta < \theta$, and $\lambda = \theta^{+\eta+1}$. Let $\mathcal{D} = \langle D(i,\beta) \mid i < \theta, \beta < \lambda \rangle$ be a $\theta$-covering matrix for $\lambda$. If $\mathcal{D}$ is transitive and $A_{\mathcal{D}}$ is stationary, then $\mathcal{D}$ is not normal.
\end{theorem}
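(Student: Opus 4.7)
The plan is to derive a contradiction from the assumption that $\mathcal{D}$ is normal, so that $\mu := \beta_{\mathcal{D}} < \lambda$ and in particular $|\mu| \leq \theta^{+\eta}$. As suggested in the remark preceding the statement, associate to $\mathcal{D}$ the sequence $\vec{f} = \langle f_\beta \mid \beta < \lambda \rangle$ in ${^\theta}\mu$ defined by $f_\beta(i) = \mathrm{otp}(D(i,\beta))$. First I would verify that $\vec{f}$ is $<^*$-increasing: for $\alpha < \beta < \lambda$ and any $i$ large enough that $\alpha \in D(i,\beta)$ (a cofinal final segment of $i$'s), transitivity gives $D(i,\alpha) = D(i,\beta) \cap \alpha \subsetneq D(i,\beta)$, so $f_\alpha(i) < f_\beta(i)$.

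Next I would translate the combinatorial good-point property for $\mathcal{D}$ into the PCF-theoretic good-point property for $\vec{f}$: for each $\beta \in A_{\mathcal{D}}$, the sequence $\langle f_\alpha \mid \alpha < \beta \rangle$ has an eub $g_\beta$ with $\cf(g_\beta(i)) = \cf(\beta) > \theta$ on a final segment of $i$'s. Let $A \subseteq \beta$ witness $\beta \in A_{\mathcal{D}}$. For each $\gamma \in A$, pick $\alpha_\gamma \in A \setminus \gamma$ and $i_\gamma < \theta$ with $A \cap \gamma \subseteq D(i_\gamma, \alpha_\gamma)$, and let $\sigma(\gamma) \geq i_\gamma$ be large enough that $\gamma \in D(\sigma(\gamma), \alpha_\gamma)$; by transitivity, $A \cap \gamma \subseteq D(i,\alpha_\gamma) \cap \gamma = D(i,\gamma)$ for all $i \geq \sigma(\gamma)$. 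Thus for $\gamma < \gamma'$ in $A$ and $i \geq \max(\sigma(\gamma), \sigma(\gamma'))$, $\gamma \in D(i,\gamma')$, and transitivity again gives $D(i,\gamma) = D(i,\gamma') \cap \gamma \subsetneq D(i,\gamma')$, so $f_\gamma(i) < f_{\gamma'}(i)$. This is the standard PCF-good structure on the cofinal set $A$, producing the desired eub. Applying Theorem \ref{statGoodThm} to the stationary set $A_{\mathcal{D}} \subseteq S^\lambda_{>\theta}$ then yields an eub $h$ for the full sequence $\vec{f}$ with $\cf(h(i)) > \theta$ for every $i < \theta$.

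Because every $f_\beta$ is bounded by $\mu$, the pointwise minimum $i \mapsto \min(h(i), \mu)$ is also a $<^*$-eub for $\vec{f}$, so by the uniqueness of eubs modulo the bounded ideal $h(i) \leq \mu$ on a cobounded set. After restricting to that cobounded set, $\cf(h(i)) \in \{\theta^+, \theta^{++}, \ldots, \theta^{+\eta}\}$, a set of $\eta < \theta$ regular cardinals. Since $\theta$ is regular, pigeonhole yields $k^* \in \{1, \ldots, \eta\}$ such that $X := \{i < \theta \mid \cf(h(i)) = \theta^{+k^*}\}$ is unbounded in $\theta$. Fix an ultrafilter $D$ on $\theta$ extending the cobounded filter with $X \in D$. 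The initial segment $\{[g]_D \mid g <_D h\}$ of the ultrapower $\prod_i h(i)/D$ has external cofinality $\theta^{+k^*}$, since $D$-almost every $h(i)$ has cofinality $\theta^{+k^*}$ and constant functions from $\theta^{+k^*}$ are cofinal in the ultrapower of this regular cardinal by an ultrafilter on $\theta$. On the other hand, the eub property of $h$ together with $D$ extending the cobounded filter makes $\langle [f_\beta]_D \mid \beta < \lambda \rangle$ a $<_D$-strictly increasing sequence cofinal in this initial segment, yielding a well-ordered cofinal subset of order type $\lambda$ and hence forcing the cofinality to be $\cf(\lambda) = \lambda > \theta^{+k^*}$, a contradiction.

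The main obstacle I expect is the translation in the second paragraph between the combinatorial and PCF-theoretic notions of goodness: transitivity of $\mathcal{D}$ is essential to convert the cover of $A \cap \gamma$ inside some $D(i_\gamma,\alpha_\gamma)$ into a cover inside $D(i,\gamma)$ itself, which is precisely what makes the $\mathrm{otp}$-functions strictly increase on a uniform final segment of coordinates. The arithmetic hypothesis $\eta < \theta$ enters only at the final pigeonhole, where it provides strictly fewer possible cofinalities than coordinates $i < \theta$.
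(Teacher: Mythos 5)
Your overall architecture matches the paper's: pass to the functions $f_\beta(i)=\mathrm{otp}(D(i,\beta))$, show each point of $A_{\mathcal D}$ is a PCF-good point for $\vec f$, invoke Theorem \ref{statGoodThm} to get an eub $h$ with $\cf(h(i))>\theta$ everywhere, and then use $\eta<\theta$ to contradict the existence of such an eub below $\beta_{\mathcal D}<\lambda=\theta^{+\eta+1}$. Your endgame (two incompatible cofinalities for the initial segment of an ultrapower below $[h]_D$) is a correct, slightly more machinery-heavy variant of the paper's direct argument, which instead builds functions $h_\xi$ cofinal below $h$, bounds the witnesses $\alpha_\xi$ by a single $\alpha^*<\lambda$, and pigeonholes to force $f_{\alpha^*}(i)\geq h(i)$ unboundedly often.

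However, there is a genuine gap in your second paragraph, precisely at the step you flagged as the main obstacle. You write that, by transitivity, $A\cap\gamma\subseteq D(i,\alpha_\gamma)\cap\gamma=D(i,\gamma)$ for $i\geq\sigma(\gamma)$. Transitivity only gives the inclusion $D(i,\gamma)\subseteq D(i,\alpha_\gamma)$ when $\gamma\in D(i,\alpha_\gamma)$; the reverse inclusion $D(i,\alpha_\gamma)\cap\gamma\subseteq D(i,\gamma)$ is a downward coherence property that is not assumed and does not follow. Consequently you only know $A\cap\gamma\subseteq D(i,\alpha_\gamma)$, where $\alpha_\gamma$ may lie well above $\gamma$, and the subsequent claim that $\gamma\in D(i,\gamma')$ for $\gamma<\gamma'$ in $A$ (which is what actually drives $f_\gamma(i)<f_{\gamma'}(i)$ via transitivity) is unjustified: you only get $\gamma\in D(i,\alpha_{\gamma'})$, hence $f_\gamma(i)<f_{\alpha_{\gamma'}}(i)$, and the points $\alpha_{\gamma'}$ need not cover each other. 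The paper repairs exactly this by recursively thinning $A$ to a cofinal $A'=\{\alpha_\eta\mid\eta<\cf(\beta)\}$: at each limit stage $\eta$ one chooses $\alpha_\eta\in A$ large enough that $\{\alpha_\xi\mid\xi<\eta\}\subseteq D(i,\alpha_\eta)$ for some $i$, so that the covering occurs at points of the subsequence itself rather than at auxiliary ordinals above them; a further pressing-down (using $\cf(\beta)>\theta$) then yields a single index $i^*$ and a cofinal $A^*$ on which the $f$'s are strictly increasing at every coordinate $\geq i^*$. With that replacement (and the cosmetic fix of redefining the eub on the initial segment $i<i^*$ so that Theorem \ref{statGoodThm} applies with $\cf(g(i))>\theta$ for all $i$), your argument goes through.
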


\begin{proof}
	Suppose for sake of contradiction that $\mathcal{D}$ is transitive and normal and $A_{\mathcal{D}}$ is stationary. For each $\beta < \lambda$, define $f_\beta \in {^\theta}\beta_\mathcal{D}$ by letting $f_\beta(i) = \mathrm{otp}(D(i,\beta))$. The fact that $\mathcal{D}$ is transitive means that, for all $\alpha < \beta < \lambda$ and $i < \theta$, if $\alpha \in D(i,\beta)$, then $f_\alpha(i) < f_\beta(i)$. Thus, the sequence $\vec{f} = \langle f_\beta \mid \beta < \lambda \rangle$ is $<^*$-increasing.

	Suppose that $\beta < \lambda$ is good for $\mathcal{D}$ as witnessed by an unbounded $A \subseteq \beta$. We may assume that $\mathrm{otp}(A) = \cf(\beta)$. We start by finding an unbounded $A' \subseteq A$ such that, for every $\alpha \in A'$, there is $i < \theta$ such that $A' \cap \alpha \subseteq D(i,\alpha)$. $A'$ will be enumerated in increasing fashion by $\langle \alpha_\eta \mid \eta < \cf(\beta) \rangle$, and we define it recursively as follows:
	\begin{itemize}
		\item{$\alpha_0 = \min(A)$.}
		\item{If $\eta < \cf(\beta)$ and $\alpha_\eta$ has been defined, let $\alpha_{\eta + 1} = \min(A \setminus (\alpha_\eta + 1))$.}
		\item{If $\eta < \cf(\beta)$ is a limit ordinal and $\langle \alpha_\xi \mid \xi < \eta \rangle$ has been defined, let $\alpha_\eta$ be the least $\alpha \in A$ such that, for some $i < \theta$, $\{\alpha_\xi \mid \xi < \eta \} \subseteq D(i,\alpha)$.}
	\end{itemize}
	It is clear that $A'$ thusly defined has the desired properties. Now find $i^* < \theta$ and an unbounded $A^* \subseteq A'$ such that, for all $\alpha \in A^*$, $A^* \cap \alpha \subseteq D(i^*,\alpha)$ and note that, if $\alpha_0, \alpha_1 \in A^*$, $\alpha_0 < \alpha_1$, and $i^* \leq i < \theta$, then $f_{\alpha_0}(i) < f_{\alpha_1}(i)$. Define $g \in {^\theta}\mathrm{On}$ by letting $g(i) = \cf(\beta)$ for all $i < i^*$ and $g(i) = \sup(\{f_\alpha(i) \mid \alpha \in A^*\})$ for all $i^* \leq i < \theta$.  It is easily seen that $g$ is an eub for $\langle f_\alpha \mid \alpha < \beta \rangle$ and $\cf(g(i)) = \cf(\beta)$ for all $i < \theta$. By assumption, this is true for stationarily many $\beta \in S^\lambda_{>\theta}$, which, by Theorem \ref{statGoodThm}, implies that $\vec{f}$ has an exact upper bound $h \in {^\omega}(\beta_{\mathcal{D}}+1)$ such that $\cf(h(i)) > \theta$ for all $i < \theta$. Since $h(i) < \lambda$ for all $i < \theta$, there is $\mu \in (\theta, \theta^{+\eta+1})$ and an unbounded $B \subseteq \theta$ such that, for all $i \in B$, $\cf(h(i)) = \mu$. For each $i \in B$, let $\langle \delta^i_\xi \mid \xi < \mu \rangle$ be increasing and cofinal in $h(i)$. For $\xi < \mu$, define $h_\xi : \theta \rightarrow \beta_{\mathcal{D}}$ by letting $h_\xi(i) = 0$ for $i \not\in B$ and $h_\xi(i) = \delta^i_\xi$ for $i \in B$. Then $h_\xi < h$, so there is $\alpha_\xi < \lambda$ such that $h_\xi <^* f_{\alpha_\xi}$. Let $\alpha^* = \sup(\{\alpha_\xi \mid \xi < \mu\})$. Since $\mu < \lambda$, $\alpha^* < \lambda$. For all $\xi < \mu$, find $i_\xi < \theta$ such that, whenever $i_\xi \leq i < \theta$, $h_\xi(i) < f_{\alpha^*}(i)$. Since $\theta < \mu$, there is an $i^* < \theta$ and an unbounded $E \subseteq \mu$ such that, for all $\xi \in E$, $i_\xi = i^*$. Then, for all $i^* \leq i < \theta$ with $i \in B$, $f_{\alpha^*}(i) \geq \sup(\{h_\xi(i) \mid \xi \in E\}) = h(i)$, contradicting the fact that $h$ is an upper bound for $\vec{f}$.
\end{proof}

For our next result, we need the following club guessing theorem, due to Shelah.

\begin{theorem} \label{clubguessing}
	Suppose $\kappa$ and $\lambda$ are regular cardinals and $\kappa^+ < \lambda$. Then there is a sequence $\langle C_\alpha \mid \alpha \in S^\lambda_\kappa \rangle$ such that:
	\begin{enumerate}
		\item{For all $\alpha \in S^\lambda_\kappa$, $C_\alpha$ is club in $\alpha$.}
		\item{For every club $C \subseteq \lambda$, the set $\{\alpha \in S^\lambda_\kappa \mid C_\alpha \subseteq C\}$ is stationary.}
	\end{enumerate}
\end{theorem}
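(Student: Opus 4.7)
The plan is to use Shelah's classical iteration-and-contradiction argument. Fix an initial sequence $\langle C^0_\alpha \mid \alpha \in S^\lambda_\kappa \rangle$ with each $C^0_\alpha$ a club in $\alpha$ of order type $\kappa$. The aim is to progressively thin out each $C^\beta_\alpha$ by intersecting with clubs of $\lambda$ until the resulting sequence guesses clubs. The bound on the length of this process is supplied by the fact that each $C^0_\alpha$ has order type only $\kappa$, whereas the hypothesis $\kappa^+ < \lambda = \cf(\lambda)$ leaves room to intersect $\kappa^+$ many auxiliary clubs of $\lambda$ while keeping their intersection club in $\lambda$.

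Concretely, I would recursively define $\langle C^\beta_\alpha \mid \alpha \in S^\lambda_\kappa, \beta < \kappa^+ \rangle$ as follows. At stage $\beta$, if $\langle C^\beta_\alpha \rangle$ already guesses clubs, we are done. Otherwise, there is a club $D_\beta \subseteq \lambda$ and a club $E_\beta \subseteq \lambda$ such that $C^\beta_\alpha \not\subseteq D_\beta$ for every $\alpha \in E_\beta \cap S^\lambda_\kappa$. Define $C^{\beta+1}_\alpha$ to be the closure (in $\alpha$) of $C^\beta_\alpha \cap D_\beta$ when $\alpha \in E_\beta$ and $\alpha$ is a limit point of $D_\beta$, and $C^{\beta+1}_\alpha = C^\beta_\alpha$ otherwise. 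For such $\alpha$, $C^\beta_\alpha \cap D_\beta$ is unbounded in $\alpha$ (as the intersection of two clubs in $\alpha$), so $C^{\beta+1}_\alpha$ is still a club in $\alpha$; and picking $\gamma \in C^\beta_\alpha \setminus D_\beta$, openness of $\lambda \setminus D_\beta$ forces $\gamma$ not to be a limit of points of $D_\beta$, so $\gamma \notin C^{\beta+1}_\alpha$ and the refinement is strict. At limit stages $\beta < \kappa^+$, set $C^\beta_\alpha = \bigcap_{\gamma < \beta} C^\gamma_\alpha$ when this remains club in $\alpha$, reverting to a default otherwise. If the recursion succeeds for all $\beta < \kappa^+$, the club $E = \bigcap_{\beta < \kappa^+}(E_\beta \cap \mathrm{lim}(D_\beta))$ is still club in $\lambda$ because $\kappa^+ < \lambda$; picking $\alpha \in E \cap S^\lambda_\kappa$ where the refinements genuinely shrink at every stage, $\langle C^\beta_\alpha \mid \beta < \kappa^+ \rangle$ is strictly decreasing in $\mathcal{P}(C^0_\alpha)$, yielding an injection $\kappa^+ \hookrightarrow C^0_\alpha$, contradicting $|C^0_\alpha| = \kappa$.

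The main obstacle will be the behavior at limit stages $\beta < \kappa^+$ of cofinality exactly $\kappa$: there the naive intersection $\bigcap_{\gamma < \beta} C^\gamma_\alpha$ need not be unbounded in $\alpha$, since $\cf(\alpha) = \kappa$ as well. To handle this I would switch to a diagonal-intersection trick: fix, for each limit $\beta < \kappa^+$, a cofinal sequence $\langle \gamma^\beta_\xi \mid \xi < \cf(\beta)\rangle$ in $\beta$, and use the natural enumeration of $C^0_\alpha$ of order type $\kappa$ to form a diagonal intersection of the $C^{\gamma^\beta_\xi}_\alpha$'s that remains a club in $\alpha$ for stationarily many $\alpha \in E$. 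A standard Fodor/bookkeeping argument then preserves strict descent along a stationary subset of $E \cap S^\lambda_\kappa$, which is all that is needed for the cardinality contradiction. Setting up this bookkeeping so that strictness survives at the $\kappa$-cofinal limits is the delicate part, but once in place it closes the argument.
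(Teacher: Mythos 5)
The paper does not actually prove this theorem---it is quoted as a known result of Shelah---so your proposal has to be measured against the standard argument. Your overall strategy (iterate a failure of guessing $\kappa^+$ many times and derive a contradiction by counting inside a set of size $\kappa$) is indeed Shelah's, but two genuine gaps remain. First, the limit stages. You correctly identify that $\bigcap_{\gamma<\beta}C^\gamma_\alpha$ can be bounded in $\alpha$ when $\cf(\beta)=\cf(\alpha)=\kappa$, but the repair you sketch does not work as described: a diagonal intersection of the $C^{\gamma^\beta_\xi}_\alpha$ along an enumeration of $C^0_\alpha$ is a club in $\alpha$, but it is not \emph{contained} in the earlier $C^\gamma_\alpha$ (only a tail of it is), so the chain $\langle C^\beta_\alpha\mid\beta<\kappa^+\rangle$ ceases to be $\subseteq$-decreasing, and the injection $\kappa^+\hookrightarrow C^0_\alpha$---which needs the element discarded at stage $\beta$ to stay discarded at all later stages---is lost; it is also unclear what Fodor on the $\alpha$'s contributes to the descent of a chain indexed by $\beta$ for a fixed $\alpha$. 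The standard proof sidesteps the problem entirely by never iterating on the $C$'s: keep a $\subseteq$-decreasing \emph{continuous} sequence of clubs $E_\beta\subseteq\lambda$ (with $E_{\beta+1}\subseteq E_\beta\cap D_\beta$ and intersections at limits, harmless because $\kappa^+<\lambda=\cf(\lambda)$), and set $C^\beta_\alpha=\mathrm{cl}_\alpha(C^0_\alpha\cap E_\beta)$ directly from the original ladder. Since $C^0_\alpha$ is closed in $\alpha$, every $C^\beta_\alpha$ is a subset of $C^0_\alpha$, the chain is automatically decreasing, and it drops strictly at every stage because $C^\beta_\alpha\not\subseteq D_\beta$ while $C^{\beta+1}_\alpha\subseteq E_{\beta+1}\subseteq D_\beta$. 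No bookkeeping at limits is then needed.

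Second, your argument tacitly assumes $\kappa$ is uncountable. The step ``$C^\beta_\alpha\cap D_\beta$ is unbounded in $\alpha$ as the intersection of two clubs in $\alpha$'' uses $\cf(\alpha)>\omega$; when $\kappa=\omega$, two clubs in a point of countable cofinality can be disjoint, so already $C^0_\alpha\cap D_0$ may be bounded in $\alpha$ and the refinement step produces nothing usable. The theorem as stated includes $\kappa=\omega$ (though the paper only applies it with $\kappa$ uncountable, since there $\theta<\kappa$ with $\theta$ infinite and regular). The countable case is true but requires a different device: replace each $\gamma\in C^0_\alpha$ by $\sup(E_i\cap\gamma)$, note that for fixed $\gamma$ this value is non-increasing in $i$ and hence stabilizes, and run the construction for $\omega_1=\kappa^+$ stages so that all $\omega$ many values stabilize by some $i^*$, whence $C^{i^*}_\alpha\subseteq E_{i^*+1}\subseteq D_{i^*}$ contradicts the failure at stage $i^*$. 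You should either restrict your claim to uncountable $\kappa$ (which suffices for the paper's application) or add this separate argument.
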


A sequence $\langle C_\alpha \mid \alpha \in S^\lambda_\kappa \rangle$ as given in Theorem \ref{clubguessing} is called a \emph{club-guessing sequence}.

\begin{theorem}
	Let $\theta < \kappa < \lambda$ be regular cardinals with $\kappa^{++} < \lambda$, and let $\mathcal{D} = \langle D(i,\beta) \mid i < \theta, \beta < \lambda \rangle$ be a uniform $\theta$-covering matrix for $\lambda$. Then $A_{\mathcal{D}} \cap S^{\lambda}_{\kappa}$ is stationary.
\end{theorem}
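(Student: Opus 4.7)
The strategy is to combine uniformity of $\mathcal{D}$, Fodor's lemma, and Shelah's club guessing theorem (which applies because $\kappa^+ < \lambda$) to produce a stationary set of good points in $S^\lambda_\kappa$. Fix an arbitrary club $E \subseteq \lambda$; I aim to produce some $\beta \in E \cap S^\lambda_\kappa$ which is good for $\mathcal{D}$.

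First, use uniformity twice. For each limit $\alpha < \lambda$, choose $i(\alpha) < \theta$ least such that $D(i(\alpha), \alpha)$ contains a club in $\alpha$. Applying Fodor's lemma to $i \restriction S^\lambda_\theta$, I obtain a fixed $k^* < \theta$ and a stationary $T^* \subseteq S^\lambda_\theta$ such that $D(k^*, \gamma)$ contains a club in $\gamma$ for every $\gamma \in T^*$. A similar Fodor application on $S^\lambda_\kappa$ gives a stationary $S^* \subseteq S^\lambda_\kappa$ and $i^* < \theta$ with $D(i^*, \alpha)$ containing a club in $\alpha$ for each $\alpha \in S^*$.

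Next, apply Theorem \ref{clubguessing} to fix a club-guessing sequence $\langle C_\alpha \mid \alpha \in S^\lambda_\kappa \rangle$, each $C_\alpha$ of order type $\kappa$. Now perform an elementary submodel construction. Pick a sufficiently large regular $\chi$ and build a continuous increasing chain $\langle N_\xi \mid \xi < \kappa \rangle \prec H_\chi$ with $\mathcal{D}, E, T^*, S^*, k^*, i^*, \vec{C}$ all in $N_0$, $|N_\xi| < \kappa$, $\theta + 1 \subseteq N_\xi$, and $\langle N_\eta \mid \eta \leq \xi \rangle \in N_{\xi+1}$. Set $\delta_\xi = \sup(N_\xi \cap \lambda)$ and $\beta = \sup_\xi \delta_\xi$; using standard reflection I arrange $\beta \in E \cap S^*$ and $\delta_\xi \in T^*$ whenever $\mathrm{cf}(\xi) = \theta$ (by intersecting with the club of points where these reflect from $N_0$-data).

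I claim $A = \{\delta_\xi \mid \xi < \kappa\}$ witnesses goodness of $\beta$. Given any $\gamma < \beta$, choose $\xi^* < \kappa$ with $\delta_{\xi^*} > \gamma$; it suffices to find $j < \theta$ with $\{\delta_\eta \mid \eta < \xi^*\} \subseteq D(j, \delta_{\xi^*})$, since then $A \cap \gamma$ is covered by $D(j, \delta_{\xi^*})$ and $\delta_{\xi^*} < \beta$. I prove this by induction on $\xi^*$: the successor step uses property (3) of $\mathcal{D}$ combined with (2) to absorb the new element into a single slice; the limit step with $\mathrm{cf}(\xi^*) < \theta$ combines the prior covers via (3) and takes a sup of the resulting indices, which remains below $\theta$. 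The key step is the limit case $\mathrm{cf}(\xi^*) \geq \theta$: here $\delta_{\xi^*} \in T^*$ by construction, so $D(k^*, \delta_{\xi^*})$ contains a club in $\delta_{\xi^*}$, and I use the elementarity of the chain (together with the fact that $T^* \in N_0$) to argue that the previously chosen $\delta_\eta$'s necessarily lie in this club, so $k^*$ serves as the required index $j$.

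The main obstacle is precisely this last verification: ensuring that the sequence $\langle \delta_\eta \mid \eta < \xi^* \rangle$ already lies inside the uniformity club $D(k^*, \delta_{\xi^*})$ when $\mathrm{cf}(\xi^*) \geq \theta$. This demands a careful setup of the elementary chain so that the $\delta_\eta$'s are chosen with foresight of $T^*$ and $k^*$; the freedom to fatten each $N_\xi$ and the availability of $T^*$ on a stationary set (via club-guessing reflection using $\kappa^+ < \lambda$) is what makes the argument go through, and the hypothesis $\kappa^{++} < \lambda$ ensures enough room for both the $\kappa$-length construction and this reflection simultaneously.
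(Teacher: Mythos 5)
There is a genuine gap at exactly the point you flag as the ``main obstacle,'' and your proposed resolution does not close it. Two problems arise. First, you cannot arrange that $\delta_\xi \in T^*$ for all $\xi$ of cofinality $\theta$: the points $\delta_\xi$ form (the trace of) a club in $\beta$, and a Fodor-derived stationary set $T^* \subseteq S^\lambda_\theta$ is in general co-stationary, so there is no reason its trace on $\beta$ should contain the cofinality-$\theta$ points of any club in $\beta$; this is a reflection property you are not entitled to. Second, even granting $\delta_{\xi^*} \in T^*$, uniformity only gives you \emph{some} club $F \subseteq D(k^*, \delta_{\xi^*})$, and two clubs in $\delta_{\xi^*}$ need not contain one another. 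Elementarity cannot steer the $\delta_\eta$'s into $F$, because $F$ is determined only by $D(k^*,\delta_{\xi^*})$, and $\delta_{\xi^*} = \sup(N_{\xi^*} \cap \lambda)$ is not an element of any model $N_\eta$ with $\eta \le \xi^*$; the witnessing club is simply not visible to the models that chose the $\delta_\eta$'s. (Your successor and cofinality-$<\theta$ limit steps are fine; it is precisely the cofinality-$\ge\theta$ limit steps, which must occur since $\theta < \kappa$, that break.)

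The paper's proof sidesteps this by never trying to apply uniformity at intermediate points of small cofinality. Instead it builds an increasing continuous sequence $\langle \beta_\eta \mid \eta < \kappa^{++}\rangle$ in $E$ with a ``catch-up'' clause at successors (if some initial pattern $\{\beta_\xi \mid \xi \in C_\alpha \cap (\eta+1)\}$ is covered by some $D(i,\beta)$ with $\beta < \lambda$, then it is already covered by some $D(i,\beta)$ with $\beta < \beta_{\eta+1}$), applies uniformity \emph{once} at the top point $\gamma$ of cofinality $\kappa^{++}$ to get a single club $F \subseteq D(i,\gamma)$, and then uses a club-guessing sequence on $S^{\kappa^{++}}_\kappa$ (not on $\lambda$) to find $C_\alpha \subseteq F^*$; every initial segment of $\{\beta_\eta \mid \eta \in C_\alpha\}$ is then covered by $D(i,\gamma)$, hence by the catch-up clause by some $D(j,\beta)$ with $\beta < \beta_{\eta+1} < \beta_\alpha$. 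This is why the hypothesis is $\kappa^{++} < \lambda$: one needs room for a sequence of length $\kappa^{++}$ whose supremum still lies below $\lambda$ and has cofinality $\kappa^{++}$, to which uniformity and $\kappa$-club-guessing on $\kappa^{++}$ both apply. I would suggest reworking your argument along these lines rather than trying to patch the elementary-chain construction.
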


\begin{proof}
	Let $E$ be club in $\lambda$. We will find $\alpha \in E \cap A_{\mathcal{D}} \cap S^{\lambda}_{\kappa}$. Fix a club-guessing sequence $\langle C_\alpha \mid \alpha \in S^{\kappa^{++}}_{\kappa} \rangle$. Define an increasing, continuous sequence $\langle \beta_\eta \mid \eta < \kappa^{++} \rangle$ of elements of $E$ as follows.
	\begin{itemize}
		\item{$\beta_0 = \min(E)$.}
		\item{If $\eta < \kappa^{++}$ and $\beta_\eta$ has been defined, choose $\beta_{\eta + 1} \in E$ large enough so that, for all $\alpha \in S^{\kappa^{++}}_{\kappa}$, if there are $i < \theta$ and $\beta < \lambda$ such that $\{\beta_\xi \mid \xi \in C_\alpha \cap (\eta + 1)\} \subseteq D(i,\beta)$, then there are such an $i < \theta$ and $\beta < \lambda$ with $\beta < \beta_{\eta + 1}$.}
	\end{itemize}

	Let $\gamma = \sup(\{\beta_\eta \mid \eta < \kappa^{++}\})$. Since $\mathcal{D}$ is uniform, we can find a club $F$ in $\gamma$ such that, for some $i < \theta$, $F \subseteq D(i,\gamma)$. We can assume that $F \subseteq \{\beta_\eta \mid \eta < \kappa^{++}\}$. Let $F^* = \{\eta < \kappa^{++} \mid \beta_\eta \in F\}$. $F^*$ is club in $\kappa^{++}$, so there is some $\alpha \in S^{\kappa^{++}}_{\kappa}$ such that $C_\alpha \subseteq F^*$. We claim that $\beta_\alpha$ is good for $\mathcal{D}$, as witnessed by $\{\beta_\eta \mid \eta \in C_\alpha\}$.

	To see this, fix $\eta \in C_\alpha$. We must show that, for some $\delta < \alpha$ and $i < \theta$, $\{\beta_\xi \mid \xi \in C_\alpha \cap (\eta + 1)\} \subseteq D(i, \delta)$. Consider stage $\eta + 1$ in the construction of the sequence $\langle \beta_\xi \mid \xi < \kappa^{++} \rangle$. It is the case that there is $\beta < \lambda$ and $i < \theta$ such that $\{\beta_\xi \mid \xi \in C_\alpha \cap (\eta + 1)\} \subseteq D(i,\beta)$ (namely, $\beta = \gamma$), so, by construction, it is also the case that, for some $j < \theta$, $\{\beta_\xi \mid \xi \in C_\alpha \cap (\eta + 1)\} \subseteq D(j, \beta_{\eta + 1})$.
\end{proof}

\begin{corollary} \label{4_corollary}
	If $\theta < \lambda$ are regular cardinal and $\theta^{+4} \leq \lambda < \theta^{+\theta}$, then there are no transitive, uniform, normal $\theta$-covering matrices for $\lambda.$
\end{corollary}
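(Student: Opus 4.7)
The plan is to combine Theorem \ref{normalA} with the preceding stationarity result by making an appropriate choice of $\kappa$. Suppose for contradiction that $\mathcal{D}$ is a transitive, uniform, normal $\theta$-covering matrix for $\lambda$. Since $\theta^{+4} \leq \lambda < \theta^{+\theta}$, we may write $\lambda = \theta^{+\eta + 1}$ for a unique ordinal $\eta$ satisfying $3 \leq \eta + 1 < \theta$; in particular, $1 \leq \eta < \theta$, which is exactly the hypothesis of Theorem \ref{normalA}.

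Next, I would set $\kappa = \theta^+$. Then $\theta < \kappa < \lambda$ and $\kappa^{++} = \theta^{+3} < \theta^{+4} \leq \lambda$, so the hypotheses of the previous theorem are met. Applying that theorem to the uniform matrix $\mathcal{D}$, I would conclude that $A_{\mathcal{D}} \cap S^\lambda_\kappa$ is stationary; in particular, $A_{\mathcal{D}}$ is stationary. But now $\mathcal{D}$ is transitive and $A_{\mathcal{D}}$ is stationary, while $\lambda = \theta^{+\eta + 1}$ with $1 \leq \eta < \theta$, so Theorem \ref{normalA} applies and forces $\mathcal{D}$ to be non-normal. This contradicts our assumption that $\mathcal{D}$ was normal.

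There is no genuine obstacle here beyond recording the arithmetic: the corollary is really just the logical conjunction of the two prior theorems, with the bound $\theta^{+4} \leq \lambda$ chosen precisely so that the intermediate cardinal $\kappa = \theta^+$ satisfies $\kappa^{++} < \lambda$, and the bound $\lambda < \theta^{+\theta}$ chosen precisely so that $\lambda$ is of the form $\theta^{+\eta + 1}$ with $\eta < \theta$ as required by Theorem \ref{normalA}.
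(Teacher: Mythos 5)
Your proof is correct and is precisely the argument the paper intends: choose $\kappa = \theta^+$ so that $\kappa^{++} = \theta^{+3} < \theta^{+4} \leq \lambda$ makes the stationarity theorem applicable, note that regularity of $\lambda$ together with $\lambda < \theta^{+\theta}$ forces $\lambda = \theta^{+\eta+1}$ for some $\eta < \theta$ (a limit index would make $\lambda$ singular), and then apply Theorem \ref{normalA}. The arithmetic bookkeeping is exactly as you describe, and there is no gap.
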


\begin{remark}
  In \cite{lambiehanson}, we show that, if $\theta$ is a regular cardinal, there is consistently a transitive, uniform, normal $\theta$-covering matrix for $\theta^+$. The situations for $\theta^{++}$ and $\theta^{+3}$ are currently unclear. We also note that Corollary \ref{4_corollary} is not in conflict with Theorem \ref{square_normal_thm}, as the covering matrix constructed in the proof of 
  Theorem \ref{square_normal_thm} is not transitive.
\end{remark}

We now make some additional observations about $A_{\mathcal{D}}$, starting with results that downward coherence has an effect on $A_{\mathcal{D}}$.

\begin{proposition}
	Let $\theta$ and $\lambda$ be regular cardinals with $\theta^+ < \lambda$, and let $\mathcal{D}$ be a $\theta$-covering matrix for $\lambda$. If $\mathcal{D}$ is downward coherent, then $A_{\mathcal{D}} = S^\lambda_{>\theta}$.
\end{proposition}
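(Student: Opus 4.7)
The plan is to produce, for an arbitrary $\beta \in S^\lambda_{>\theta}$, a concrete witness $A \subseteq \beta$ to the goodness of $\beta$, and the natural candidate is a single column of the covering matrix at $\beta$.

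First I would observe that, because $\beta = \bigcup_{i < \theta} D(i, \beta)$, the sets $D(i,\beta)$ are $\subseteq$-increasing in $i$, and $\cf(\beta) > \theta$, there must exist some $i^* < \theta$ such that $D(i^*, \beta)$ is unbounded in $\beta$; otherwise the $\theta$ values $\sup D(i, \beta)$ for $i < \theta$ would all be below $\beta$ and their supremum, which equals $\beta$, would have cofinality at most $\theta$, a contradiction.

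Set $A = D(i^*, \beta)$, which is unbounded in $\beta$ by the choice of $i^*$. To verify the goodness condition, fix $\gamma < \beta$. By downward coherence of $\mathcal{D}$ applied to the pair $\gamma < \beta$ and the index $i^*$, there is some $j < \theta$ such that
\[
A \cap \gamma \;=\; D(i^*, \beta) \cap \gamma \;\subseteq\; D(j, \gamma).
\]
Taking $\alpha = \gamma$ and $i = j$ gives the required witness, so $\beta \in A_{\mathcal{D}}$.

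There is essentially no obstacle here; the entire content of the proposition is that downward coherence lets a single column at $\beta$ serve as the witness, and the cofinality hypothesis $\cf(\beta) > \theta$ is exactly what guarantees at least one column is unbounded. The only step worth a moment of care is the pigeonhole argument that some $D(i^*, \beta)$ is unbounded in $\beta$; everything else is immediate from the definition of downward coherence.
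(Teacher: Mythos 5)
Your proof is correct and is essentially identical to the paper's one-line argument: the paper also witnesses goodness of $\beta$ by the column $D(i,\beta)$ for $i$ large enough that $D(i,\beta)$ is unbounded in $\beta$, with downward coherence supplying the covering of initial segments. Your write-up just makes explicit the pigeonhole step and the application of downward coherence that the paper leaves implicit.
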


\begin{proof}
	For all $\beta \in S^{\lambda}_{>\theta}$, $\beta \in A_{\mathcal{D}}$ as witnessed by $D(i,\beta)$, where $i$ is large enough so that $D(i,\beta)$ is unbounded in $\beta$.
\end{proof}

The proof of the following Lemma is almost identical to that of Lemma \ref{largecovering} and is thus omitted.

\begin{lemma} \label{coherenceStep}
	Let $\theta < \lambda$ be regular cardinals, and suppose $\mathcal{D}$ is a locally downward coherent $\theta$-covering matrix for $\lambda$. Suppose $\kappa < \lambda$ is a cardinal such that either $\kappa \in [\theta, \theta^{+\theta})$ or $\kappa^\theta < \lambda$. Then, for any $Y \in [\lambda]^\kappa$, there is $\gamma_Y$ such that, for every $i < \theta$ and $\beta < \lambda$, there is $j < \theta$ such that $D(i, \beta) \cap Y \subseteq D(j, \gamma_Y)$.
\end{lemma}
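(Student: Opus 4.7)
The plan is to adapt the proof of Lemma \ref{largecovering} essentially verbatim, replacing the statement ``$\mathcal{D}$ covers $[T]^\kappa$'' by its trace analogue: producing $\gamma_Y < \lambda$ so that intersecting any column of $\mathcal{D}$ with $Y$ drops into some column indexed by $\gamma_Y$. The base case there supplied by the covering hypothesis is here supplied by local downward coherence of $\mathcal{D}$, which is exactly the conclusion for $|Y| \leq \theta$. I would split into the two cases of the hypothesis on $\kappa$.

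For the case $\kappa \in [\theta, \theta^{+\theta})$, I induct on $\kappa$. The base case $\kappa = \theta$ is local downward coherence applied to $Y$. For the inductive step, note that $\kappa \in (\theta, \theta^{+\theta})$ forces $\cf(\kappa) \neq \theta$. Write $Y = \bigcup_{\alpha < \cf(\kappa)} Y_\alpha$ as an increasing union with $|Y_\alpha| < \kappa$, and invoke the inductive hypothesis to choose $\gamma_{Y_\alpha} < \lambda$ for each $\alpha$. Set $\gamma_Y := \sup\{\gamma_{Y_\alpha} \mid \alpha < \cf(\kappa)\}$, which is below $\lambda$ by regularity. Given $i < \theta$ and $\beta < \lambda$, the inductive hypothesis provides $j'_\alpha < \theta$ with $D(i,\beta) \cap Y_\alpha \subseteq D(j'_\alpha, \gamma_{Y_\alpha})$ for each $\alpha$, and property (3) of a covering matrix then yields $k_\alpha < \theta$ with $D(j'_\alpha, \gamma_{Y_\alpha}) \subseteq D(k_\alpha, \gamma_Y)$. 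Since $\cf(\kappa) \neq \theta$, there is $j < \theta$ bounding $k_\alpha$ for unboundedly many $\alpha$, and since the $Y_\alpha$'s are increasing with union $Y$ this gives $D(i,\beta) \cap Y \subseteq D(j, \gamma_Y)$.

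For the case $\kappa^\theta < \lambda$, I would apply local downward coherence to each $Z \in [Y]^{\leq \theta}$ to obtain $\gamma_Z < \lambda$, and set $\gamma_Y := \sup\{\gamma_Z \mid Z \in [Y]^{\leq \theta}\}$, which is below $\lambda$ since $|[Y]^{\leq \theta}| \leq \kappa^\theta < \lambda$. To verify that $\gamma_Y$ works, suppose for contradiction that some $i < \theta$ and $\beta < \lambda$ defeat it: for every $j < \theta$ there exists $\alpha_j \in (D(i,\beta) \cap Y) \setminus D(j, \gamma_Y)$. Then $Z := \{\alpha_j \mid j < \theta\}$ lies in $Y \cap D(i,\beta)$ and has size at most $\theta$, so local downward coherence at $Z$ gives $j' < \theta$ with $Z = Z \cap D(i,\beta) \subseteq D(j', \gamma_Z)$; applying property (3) together with $\gamma_Z \leq \gamma_Y$ delivers $j'' < \theta$ with $Z \subseteq D(j'', \gamma_Y)$, so $\alpha_{j''} \in D(j'', \gamma_Y)$, contradicting the choice of $\alpha_{j''}$.

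No genuine obstacle arises: the correspondence between ``cover $X$ by a single $D(i,\beta)$'' and ``cover the trace $D(i,\beta) \cap Y$ by some $D(j, \gamma_Y)$'' runs in complete parallel with the original argument. The only real bookkeeping difference is that, since the $\gamma_{Y_\alpha}$'s (resp.\ $\gamma_Z$'s) can vary with $\alpha$ (resp.\ $Z$), one has to use property (3) of the covering matrix to transport the bounds up into the fixed column $\gamma_Y$ before applying the pigeonhole on $\theta$.
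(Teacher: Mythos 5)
Your proposal is correct and is exactly the argument the paper intends: the paper omits the proof, stating only that it is ``almost identical to that of Lemma \ref{largecovering},'' and your adaptation (base case supplied by local downward coherence, induction on $\kappa$ with $\cf(\kappa)\neq\theta$ in the first case, and the $\kappa^\theta<\lambda$ counting argument with the $\{\alpha_j\}$ diagonal set in the second) is precisely that adaptation. The one piece of bookkeeping you correctly flag --- using condition (3) of the covering-matrix definition to transport the varying $\gamma_{Y_\alpha}$ (resp.\ $\gamma_Z$) up to the single column $\gamma_Y$ before applying the pigeonhole --- is the only point where the trace version differs from the original, and you handle it properly.
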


\begin{theorem} \label{coherenceA}
	Suppose $\theta$ and $\lambda$ are regular cardinals and either $\lambda \in (\theta^+, \theta^{+\theta})$ or $\lambda$ is $\theta$-inaccessible. Let $\mathcal{D}$ be a locally downward coherent $\theta$-covering matrix for $\lambda$. Then $S^\lambda_{>\theta} \setminus A_{\mathcal{D}}$ is non-stationary.
\end{theorem}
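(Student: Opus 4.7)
The plan is to produce a single club $C \subseteq \lambda$ with $C \cap S^\lambda_{>\theta} \subseteq A_{\mathcal{D}}$, which suffices. The key tool is Lemma \ref{coherenceStep}, which extends local downward coherence to subsets of cardinality $\geq \theta$, and the key geometric observation is that for $\beta \in S^\lambda_{>\theta}$, one of the sets $D(i^*, \beta)$ must already be unbounded in $\beta$.

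First, I would verify that for every $\alpha < \lambda$ with $|\alpha| \geq \theta$, the cardinal $|\alpha|$ satisfies the hypothesis of Lemma \ref{coherenceStep}: in the first case, $|\alpha| < \lambda < \theta^{+\theta}$, while in the second, $\theta$-inaccessibility gives $|\alpha|^\theta < \lambda$. Hence applying Lemma \ref{coherenceStep} to the set $\alpha$ itself produces $\gamma_\alpha < \lambda$ such that, for every $i < \theta$ and $\beta' < \lambda$, some $j < \theta$ satisfies $D(i, \beta') \cap \alpha \subseteq D(j, \gamma_\alpha)$. I then define $h : \lambda \to \lambda$ by $h(\alpha) = \gamma_\alpha + 1$ when $|\alpha| \geq \theta$ and $h(\alpha) = 0$ otherwise, and take $C = \{\beta < \lambda : h``\beta \subseteq \beta\}$, which is the usual closure club.

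Next I would show $C \cap S^\lambda_{>\theta} \subseteq A_{\mathcal{D}}$. Fix $\beta \in C \cap S^\lambda_{>\theta}$. Since $\beta = \bigcup_{i<\theta} D(i,\beta)$ and $\cf(\beta) > \theta$, regularity forces some $i^* < \theta$ with $D(i^*, \beta)$ unbounded in $\beta$; choose $A \subseteq D(i^*, \beta)$ cofinal in $\beta$ of order type $\cf(\beta)$. To check that $A$ witnesses $\beta \in A_{\mathcal{D}}$, fix $\gamma < \beta$ and set $Y = A \cap \gamma$ and $\alpha_0 = \max(\gamma, \theta) + 1$. Then $\alpha_0 < \beta$ (since $\beta$ is a limit of cofinality $> \theta$), $|\alpha_0| \geq \theta$, and $Y \subseteq \alpha_0$. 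By $\beta \in C$, $\gamma_{\alpha_0} < \beta$. Applying the coherence conclusion to the pair $(i^*, \beta)$ produces $j < \theta$ with $D(i^*, \beta) \cap \alpha_0 \subseteq D(j, \gamma_{\alpha_0})$. Since $Y \subseteq A \cap \alpha_0 \subseteq D(i^*, \beta) \cap \alpha_0$, we obtain $Y \subseteq D(j, \gamma_{\alpha_0})$ with $\gamma_{\alpha_0} < \beta$ and $j < \theta$, as required.

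The main obstacle I anticipated is the case $\cf(\beta) > \theta^+$: then initial segments $A \cap \gamma$ can have cardinality strictly larger than $\theta$, so local downward coherence applied directly to $Y$ is not available. The strengthened Lemma \ref{coherenceStep}, which works for subsets of all cardinalities $< \lambda$ permitted by the hypotheses on $\lambda$, is precisely what absorbs this obstacle and makes a single uniform argument succeed for all cofinalities $> \theta$.
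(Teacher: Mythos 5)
Your proof is correct and follows essentially the same route as the paper: both hinge on applying Lemma \ref{coherenceStep} to initial segments of $\lambda$ and on the observation that some column $D(i^*,\beta)$ is unbounded in $\beta$ when $\cf(\beta)>\theta$. The only difference is presentational --- you build the closure club of $\alpha \mapsto \gamma_\alpha$ directly, while the paper argues by contradiction and presses down on the witnessing ordinals $\delta_\beta$ via Fodor; these are dual formulations of the same argument.
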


\begin{proof}
	Under either assumption, Lemma \ref{coherenceStep} implies that, for all $\delta < \lambda$, there is $\gamma_\delta < \lambda$ such that, for all $i < \theta$ and $\beta < \lambda$, there is $j < \theta$ such that $D(i, \beta) \cap \delta \subseteq D(j, \gamma_\delta)$. Suppose for sake of contradiction that $S := S^\lambda_{>\theta} \setminus A_{\mathcal{D}}$ is stationary. For each $\beta \in S$, let $i_\beta < \theta$ be such that $D(i_\beta, \beta)$ is unbounded in $\beta$. Since $\beta \not\in A_{\mathcal{D}}$, it must be the case that there is $\delta_\beta < \beta$ such that, for all $\alpha < \beta$ and all $j < \theta$, $D(i_\beta, \beta) \cap \delta_\beta \not\subseteq D(j, \alpha)$. Find a stationary $T \subseteq S$ and a fixed $\delta < \lambda$ such that, for all $\beta \in T$, $\delta_\beta = \delta$. Fix $\beta \in T \setminus (\gamma_\delta + 1)$. Then there is $j < \theta$ such that $D(i_\beta, \beta) \cap \delta \subseteq D(j, \gamma_\delta)$, contrary to the assumption that $\delta_\beta = \delta$.
\end{proof}

\begin{corollary}
	Suppose $\theta$ and $\lambda$ are regular cardinals and $\lambda \in (\theta^+, \theta^{+\theta})$. Suppose $\mathcal{D}$ is a transitive, locally downward coherent $\theta$-covering matrix for $\lambda$. Then $\mathcal{D}$ is not normal.
\end{corollary}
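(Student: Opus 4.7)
The plan is to combine Theorem \ref{coherenceA} and Theorem \ref{normalA}, so the proof will be essentially a one-line deduction once we observe that the hypothesis $\lambda \in (\theta^+, \theta^{+\theta})$ forces $\lambda$ to have exactly the shape required by Theorem \ref{normalA}.

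First I would verify that $\lambda = \theta^{+\eta+1}$ for some $\eta$ with $1 \leq \eta < \theta$. The cardinals in the interval $(\theta, \theta^{+\theta})$ are exactly the cardinals $\theta^{+\alpha}$ with $0 < \alpha < \theta$. Since $\lambda$ is regular and $\alpha < \theta \leq \theta^{+\alpha}$, if $\alpha$ were a limit ordinal we would have $\cf(\theta^{+\alpha}) = \cf(\alpha) < \theta^{+\alpha}$, contradicting the regularity of $\lambda$. Hence $\alpha$ is a successor, say $\alpha = \eta+1$, and the assumption $\lambda > \theta^+$ forces $\eta \geq 1$, while $\alpha < \theta$ forces $\eta < \theta$.

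Next I would invoke Theorem \ref{coherenceA}: since $\lambda \in (\theta^+, \theta^{+\theta})$ and $\mathcal{D}$ is locally downward coherent, the set $S^\lambda_{>\theta} \setminus A_{\mathcal{D}}$ is non-stationary. Since $\lambda = \theta^{+\eta+1}$ with $\eta \geq 1$ implies $\lambda > \theta^+$, the set $S^\lambda_{>\theta}$ is itself stationary in $\lambda$ (for example, $S^\lambda_{\theta^+}$ is stationary). It follows that $A_{\mathcal{D}}$ is stationary in $\lambda$.

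Finally, Theorem \ref{normalA} applies to $\mathcal{D}$ with the above value of $\eta$, since $\mathcal{D}$ is transitive and $A_{\mathcal{D}}$ is stationary, and its conclusion is precisely that $\mathcal{D}$ is not normal. There is no real obstacle here beyond the bookkeeping in the first paragraph; the work has already been done in the two preceding theorems.
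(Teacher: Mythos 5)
Your proof is correct and is exactly the paper's argument: the paper's proof reads simply ``Combine Theorems \ref{normalA} and \ref{coherenceA},'' and your write-up just makes explicit the bookkeeping (that regularity forces $\lambda = \theta^{+\eta+1}$ with $1 \leq \eta < \theta$, and that non-stationarity of $S^\lambda_{>\theta} \setminus A_{\mathcal{D}}$ yields stationarity of $A_{\mathcal{D}}$) that the paper leaves implicit.
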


\begin{proof}
	Combine Theorems \ref{normalA} and \ref{coherenceA}.
\end{proof}

$\mathrm{CP}$ also has an impact on $A_{\mathcal{D}}$.

\begin{proposition}
	Let $\theta < \lambda$ be regular cardinals. Suppose $\mathcal{D}$ is a $\theta$-covering matrix for $\lambda$ and $\mathrm{CP}(\mathcal{D})$ holds. Then, for every successor ordinal $\eta < \theta$ such that $\theta^{+\eta} < \lambda$, $A_{\mathcal{D}} \cap S^\lambda_{\theta^{+\eta}}$ is stationary.
\end{proposition}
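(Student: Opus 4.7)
The plan is to fix an arbitrary club $E \subseteq \lambda$ and produce some $\beta \in E \cap A_{\mathcal{D}} \cap S^\lambda_{\theta^{+\eta}}$, which establishes stationarity. Write $\mu := \theta^{+\eta}$; because $\eta$ is a successor ordinal, $\mu$ is regular, and because $\eta < \theta$, we have $\mu < \theta^{+\theta}$. Let $T \subseteq \lambda$ be the unbounded set witnessing $\mathrm{CP}(\mathcal{D})$. Applying Lemma \ref{largecovering} for each cardinal $\kappa \in [\theta, \mu)$ (all of which lie in $[\theta, \theta^{+\theta})$) gives that $\mathcal{D}$ covers $[T]^\kappa$ for all such $\kappa$.

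I would then construct by recursion on $\xi < \mu$ a strictly increasing sequence $\langle \beta_\xi \mid \xi < \mu \rangle$ in $T$, interleaved with auxiliary points $\delta_\xi \in E$ satisfying $\sup_{\rho < \xi} \beta_\rho \leq \delta_\xi < \beta_\xi$. The main inductive demand is the \emph{absorption property}: for each $\xi < \mu$ there exist $i_\xi < \theta$ and $\alpha_\xi < \beta_\xi$ with $\{\beta_\rho \mid \rho < \xi\} \subseteq D(i_\xi, \alpha_\xi)$. At stage $\xi$ I first locate $i_\xi$ and $\alpha_\xi$ before committing to $\beta_\xi$. If $|\xi| < \theta$, then setting $\beta' := (\sup_{\rho < \xi} \beta_\rho) + 1$, each $\beta_\rho$ lies in $D(i, \beta')$ for some $i < \theta$ by condition (1) of a covering matrix, and the regularity of $\theta$ together with condition (2) yields a common $i_\xi$; one then sets $\alpha_\xi := \beta'$. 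If $\theta \leq |\xi| < \mu$, then $\{\beta_\rho \mid \rho < \xi\} \in [T]^{|\xi|}$ is covered by some $D(i_\xi, \alpha_\xi)$ directly via Lemma \ref{largecovering}. Having fixed $i_\xi$ and $\alpha_\xi$, I pick $\delta_\xi \in E$ above $\max(\alpha_\xi, \sup_{\rho < \xi}\beta_\rho)$ and then $\beta_\xi \in T$ above $\delta_\xi$; these choices are possible because $E$ is club and $T$ is unbounded.

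Setting $\beta := \sup_{\xi < \mu}\beta_\xi = \sup_{\xi < \mu}\delta_\xi$, we get $\beta \in E$ by closedness of $E$ and $\cf(\beta) = \mu$ by regularity of $\mu$. With $A := \{\beta_\xi \mid \xi < \mu\}$, $A \subseteq T$ is unbounded in $\beta$; for any $\gamma < \beta$, letting $\xi^*$ be least with $\beta_{\xi^*} \geq \gamma$ gives $A \cap \gamma = \{\beta_\rho \mid \rho < \xi^*\} \subseteq D(i_{\xi^*}, \alpha_{\xi^*})$ with $\alpha_{\xi^*} < \beta_{\xi^*} \leq \beta$, witnessing goodness. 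The crux of the argument is the hypothesis $\eta < \theta$, which forces $\mu < \theta^{+\theta}$ and so places every initial cardinality $|\xi|$ arising in the recursion into the range to which Lemma \ref{largecovering} applies; this is what drives the absorption step.
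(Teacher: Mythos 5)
Your proof is correct and follows essentially the same route as the paper: build a strictly increasing sequence of length $\theta^{+\eta}$ of points of $T$ interleaved with points of $E$, using Lemma \ref{largecovering} to absorb each initial segment into a single $D(i,\alpha)$ with $\alpha$ below the next term, and take the supremum. The only cosmetic differences are that the paper makes the sequence continuous with limit points in $E$ rather than interleaving separate $\delta_\xi$'s, and that you handle the $|\xi|<\theta$ case explicitly via conditions (1) and (2) of the covering-matrix definition, a small point the paper glosses over.
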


\begin{proof}
	Let $T \subseteq \lambda$ witness $\mathrm{CP}(\mathcal{D})$. Fix $E$ club in $\lambda$ and $\eta < \theta$ such that $\theta^{+\eta} < \lambda$. We will find $\beta \in E \cap A_{\mathcal{D}} \cap S^\lambda_{\theta^{+\eta}}$. To do this, we will build an increasing, continuous sequence of ordinals $\langle \beta_\xi \mid \xi < \theta^{+\eta} \rangle$ such that:
	\begin{itemize}
		\item{$\beta_0 = \min(T)$.}
		\item{For all $\xi < \theta^{+\eta}$, $\beta_{\xi+1} \in T$ and $\{\beta_{\zeta+1} \mid \zeta < \xi\} \subseteq D(i,\beta_{\xi+1})$ for some $i < \theta$.}
		\item{For all limit $\xi < \theta^{+\eta}$, $\beta_\xi \in E$.}
	\end{itemize}
	The construction is straightforward. If $\xi < \theta^{+\eta}$ and $\beta_\xi$ has been defined, let $\alpha_\xi = \min(E \setminus \beta_\xi)$ and find $\beta_{\xi+1} \in T \setminus (\alpha_\xi + 1)$ such that, for some $i < \theta$, $\{\beta_{\zeta+1} \mid \zeta < \xi\} \subseteq D(i,\beta_{\xi+1})$. This can be done by Lemma \ref{largecovering}. Now let $\beta = \sup(\{\beta_\xi \mid \xi < \theta^{+\eta}\})$. $\beta \in E \cap S^\lambda_{\theta^{+\eta}}$, and $\{\beta_{\xi+1} \mid \xi < \theta^{+\eta}\}$ witnesses that $\beta \in A_{\mathcal{D}}$.
\end{proof}

If $\theta$ and $\lambda$ are close, we can do even better.

\begin{proposition}
	Let $\theta$ be a regular cardinal, and let $\lambda = \theta^{+\eta}$ for some successor ordinal $\eta < \theta$. Suppose $\mathcal{D}$ is a $\theta$-covering matrix for $\lambda$ and $\mathrm{CP}(\mathcal{D})$ holds. Then $S^\lambda_{>\theta} \setminus A_{\mathcal{D}}$ is non-stationary.
\end{proposition}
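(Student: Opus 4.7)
The plan is to exploit $\mathrm{CP}(\mathcal{D})$ to cover small initial segments of a witness $T$ for the covering property, then turn ``$\mathcal{D}$ covers $T\cap\gamma$ for each $\gamma<\lambda$'' into a club of points $\beta$ where the cover can be found below $\beta$ itself. This is essentially a closing-off argument; the cardinal arithmetic hypothesis $\lambda=\theta^{+\eta}$ with $\eta<\theta$ a successor ordinal is precisely what will let us apply Lemma \ref{largecovering} to all relevant $T\cap\gamma$.

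More concretely, let $T\subseteq\lambda$ witness $\mathrm{CP}(\mathcal{D})$, so $|T|=\lambda$ by regularity of $\lambda$. For any $\gamma<\lambda$ we have $|T\cap\gamma|<\lambda$, and every cardinal $\kappa<\lambda$ satisfies $\kappa<\theta^{+\theta}$, so (after, if necessary, enlarging a set of size $<\theta$ to one of size $\theta$) Lemma \ref{largecovering} yields some $i<\theta$ and $\alpha<\lambda$ with $T\cap\gamma\subseteq D(i,\alpha)$. Define $f:\lambda\to\lambda$ by letting $f(\gamma)$ be such an $\alpha$, and let $C_0$ be the club of ordinals $\beta<\lambda$ closed under $f$. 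Let $C_1$ be the club of limit points of $T$ in $\lambda$, and put $C=C_0\cap C_1$.

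I claim $C\cap S^\lambda_{>\theta}\subseteq A_{\mathcal{D}}$, which finishes the proof. Indeed, fix $\beta\in C\cap S^\lambda_{>\theta}$ and set $A=T\cap\beta$. Since $\beta\in C_1$, $A$ is unbounded in $\beta$. Given any $\gamma<\beta$, $A\cap\gamma=T\cap\gamma$ and, as $\beta\in C_0$, we have $f(\gamma)<\beta$; by construction of $f$, there is some $i<\theta$ with $A\cap\gamma\subseteq D(i,f(\gamma))$, witnessing goodness with $\alpha=f(\gamma)<\beta$. Thus $S^\lambda_{>\theta}\setminus A_{\mathcal{D}}$ is disjoint from the club $C$, hence non-stationary.

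The only place requiring care is the verification that $f$ is everywhere defined, i.e.\ that Lemma \ref{largecovering} can be invoked at every $T\cap\gamma$; this is the step that uses the hypothesis $\lambda<\theta^{+\theta}$ (which holds because $\eta<\theta$) to place every cardinality $\kappa$ of $T\cap\gamma$ with $\kappa\geq\theta$ into the interval $[\theta,\theta^{+\theta})$. Everything else is a straightforward closing-off argument, so there is no substantial technical obstacle beyond checking this hypothesis.
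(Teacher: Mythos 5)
Your argument is correct and is essentially the same as the paper's: apply Lemma \ref{largecovering} to each $T\cap\gamma$ (using $\lambda<\theta^{+\theta}$), define $f(\gamma)$ as a covering index, and intersect the club of closure points of $f$ with the limit points of $T$ to land inside $A_{\mathcal{D}}$. The only cosmetic difference is that the paper folds your $C_1$ into a side condition rather than naming it as a separate club.
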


\begin{proof}
	Let $T \subseteq \lambda$ witness $\mathrm{CP}(\mathcal{D})$. By Lemma \ref{largecovering}, for all $\alpha < \lambda$, there are $i < \theta$ and $\beta < \lambda$ such that $T \cap \alpha \subseteq D(i, \beta)$. Define a function $f:\lambda \rightarrow \lambda$ by letting $f(\alpha)$ be the least $\beta < \lambda$ such that, for some $i < \theta$, $T \cap \alpha \subseteq D(i, \beta)$. Let $C$ be the set of closure points of $f$. $C$ is a club in $\lambda$ and, if $\beta \in C$, $\cf(\beta) > \theta$, and $\beta$ is a limit point of $T$, then $T \cap \beta$ witnesses that $\beta \in A_{\mathcal{D}}$.
\end{proof}

\section{Stationary reflection and club-increasing sequences} \label{clubincreasesect}

In this section, we study certain increasing sequences of functions that arise naturally from transitive, uniform covering matrices. We show that stationary reflection hypotheses place strong limits on the behavior of these sequences. In what follows, if $\kappa$ is an infinite cardinal, $f,g \in {^\kappa}\mathrm{On}$, and $i < \kappa$, then $f <_i g$ means that $f(j) < g(j)$ for all $i \leq j < \kappa$. We first make the following definition.

\begin{definition}
	Let $\kappa$ be an infinite cardinal and $\delta$ an ordinal. $\vec{f} = \langle f_\beta \mid \beta < \delta \rangle$ is a \emph{club-increasing sequence of functions from ${^\kappa}\mathrm{On}$} if the following hold:
	\begin{enumerate}
		\item{For all $\beta < \delta$, $f_\beta$ is a non-decreasing function in ${^\kappa}\mathrm{On}$.}
		\item{For all $\alpha < \beta < \delta$, $f_\alpha <^* f_\beta$.}
		\item{For all $\beta < \delta$ such that $\mathrm{cf}(\beta) > \kappa$, there is a club $C_\beta \subseteq \beta$ and an $i_\beta < \kappa$ such that, for all $\alpha \in C_\beta$, $f_\alpha <_{i_\beta} f_\beta$.}
	\end{enumerate}
	If $\vec{f}$ is a club-increasing sequence of functions, let $\gamma_{\vec{f}}$ denote the least ordinal $\gamma$ such that, for all $\beta < \delta$, $f_\beta \in {^\kappa}\gamma$.
\end{definition}

\begin{remark}
	Suppose that $\theta < \lambda$ are infinite, regular cardinals and $\mathcal{D} = \langle D(i, \beta) \mid i < \theta, \beta < \lambda \rangle$ is a transitive, uniform $\theta$-covering matrix. For all $\beta < \lambda$, define $f_\beta \in {^\theta}\mathrm{On}$ by letting, for all $i < \theta$, $f_\beta(i) = \mathrm{otp}(D(i, \beta))$. Then $\vec{f} = \langle f_\beta \mid \beta < \lambda \rangle$ is a club-increasing sequence of functions and $\gamma_{\vec{f}} = \beta_{\mathcal{D}}$.
\end{remark}

\begin{proposition} \label{upperbdprop}
	Suppose $\theta < \lambda$ are infinite, regular cardinals and $\vec{f} = \langle f_\beta \mid \beta < \lambda \rangle$ is a club-increasing sequence of functions from ${^\theta}\mathrm{On}$. Then $\cf(\gamma_{\vec{f}})$ is either $\theta$ or $\lambda$.
\end{proposition}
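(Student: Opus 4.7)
The plan is to analyze $\gamma := \gamma_{\vec f}$ through the auxiliary function $g : \lambda \to \mathrm{On}$ given by $g(\beta) = \sup_{i < \theta} f_\beta(i)$. First observe that $g$ is nondecreasing: if $\alpha < \beta < \lambda$, then $f_\alpha <^* f_\beta$ combined with the monotonicity of $f_\alpha$ and $f_\beta$ gives $\sup_i f_\alpha(i) \leq \sup_i f_\beta(i)$. Also $\cf(\gamma) \leq \lambda$ trivially, since $\gamma$ is determined by the $\lambda \cdot \theta = \lambda$ ordinals $\{f_\beta(i) \mid \beta < \lambda, i < \theta\}$. Thus the task reduces to showing: if $\cf(\gamma) < \lambda$ then $\cf(\gamma) = \theta$.

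The first real step is to rule out $\gamma$ being a successor. If $\gamma = \delta + 1$, pick $\beta < \lambda$ and $i < \theta$ with $f_\beta(i) = \delta$ and any $\beta' > \beta$. From $f_\beta <^* f_{\beta'}$ and the monotonicity of $f_\beta$, for sufficiently large $j \geq i$ we have $f_{\beta'}(j) > f_\beta(j) \geq f_\beta(i) = \delta$, whence $f_{\beta'}(j) \geq \delta + 1 = \gamma$, contradicting $f_{\beta'} \in {^\theta}\gamma$. Hence $\gamma$ is a limit ordinal, and in particular $\sup_{\beta < \lambda} g(\beta) = \gamma$.

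Now assume $\mu := \cf(\gamma) < \lambda$ and locate a single $\beta^* < \lambda$ whose function already has supremum $\gamma$. Fix an increasing cofinal sequence $\langle \gamma_\xi \mid \xi < \mu \rangle$ in $\gamma$, and for each $\xi$ choose $\beta_\xi < \lambda$ and $i_\xi < \theta$ with $f_{\beta_\xi}(i_\xi) \geq \gamma_\xi$. Regularity of $\lambda$ gives $\beta^* := \sup_\xi \beta_\xi < \lambda$, and monotonicity of $g$ yields $g(\beta^*) \geq g(\beta_\xi) \geq \gamma_\xi$ for every $\xi$, so $g(\beta^*) = \gamma$. I expect this ``pinning down'' step to be the main point of the argument, since it is where the hypothesis $\cf(\gamma) < \lambda$ is actually used.

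Finally, the cofinality of $\gamma$ can be read off from $f_{\beta^*}$ alone. Since $\langle f_{\beta^*}(i) \mid i < \theta \rangle$ is nondecreasing, each term is $< \gamma$, and its supremum is $\gamma$, we get $\cf(\gamma) \leq \theta$. For the reverse inequality, if $\cf(\gamma) < \theta$ were possible we could fix a cofinal $\langle \delta_\xi \mid \xi < \cf(\gamma) \rangle$ in $\gamma$ and, for each $\xi$, some $i_\xi < \theta$ with $f_{\beta^*}(i_\xi) > \delta_\xi$. Regularity of $\theta$ then gives $i^* := \sup_\xi i_\xi < \theta$, and monotonicity of $f_{\beta^*}$ forces $f_{\beta^*}(i^*) \geq \delta_\xi$ for every $\xi$, hence $f_{\beta^*}(i^*) \geq \gamma$, contradicting $f_{\beta^*}(i^*) < \gamma$. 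So $\cf(\gamma) = \theta$, establishing the dichotomy.
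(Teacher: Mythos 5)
Your proof is correct. It shares its skeleton with the paper's argument --- both rule out $\gamma_{\vec f}$ being a successor in the same way, and both use the regularity of $\lambda$ to bound the witnesses $\beta_\xi$ by a single $\beta^* < \lambda$ --- but your finish is organized differently and is somewhat cleaner. The paper argues by contradiction from ``$\mu < \lambda$ and $\mu \neq \theta$'' and splits into the cases $\mu < \theta$ and $\theta < \mu < \lambda$, handling the first by taking a supremum of indices and the second by a pigeonhole argument fixing the indices $i_\eta, j_\eta$ on an unbounded set; in both cases it pushes the values $f_{\beta_\eta}(i_\eta)$ into $f_\beta$ via the $<^*$ relation. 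You instead introduce the nondecreasing map $g(\beta) = \sup_{i<\theta} f_\beta(i)$, use its monotonicity to conclude $g(\beta^*) = \gamma_{\vec f}$ outright, and then read $\cf(\gamma_{\vec f}) = \theta$ directly off the single nondecreasing function $f_{\beta^*}$ (its range is cofinal of size $\leq \theta$, and regularity of $\theta$ rules out cofinality $< \theta$). This avoids the case distinction and the pigeonhole entirely, at the small cost of the extra observation that $g$ is nondecreasing (which does require the monotonicity of each $f_\alpha$ together with $f_\alpha <^* f_\beta$, as you correctly note). Both arguments use only clauses (1) and (2) of the definition of club-increasing.
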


\begin{proof}
	Let $\gamma = \gamma_{\vec{f}}$ and $\mu = \cf(\gamma)$. Let $\langle \gamma_\eta \mid \eta < \mu \rangle$ be increasing and cofinal in $\gamma$. Since $\gamma = \sup(\{f_\beta(i) \mid i < \theta, \beta < \lambda\})$, it is easily seen that $\mu \leq \lambda$. It is also immediate that $\gamma$ is not a successor ordinal as, if $\gamma = \gamma_0 + 1$, then there is some $\beta < \lambda$ and $i < \theta$ such that $f_\beta(j) = \gamma_0$ for all $i \leq j < \theta$. But then, since $f_\beta <^* f_{\beta + 1}$, there is $i^* < \theta$ such that $f_{\beta + 1}(i^*) \geq \gamma_0 + 1 = \gamma$. Thus, $\mu \leq \lambda$ and $\mu$ is infinite.

	Suppose for sake of contradiction that $\mu < \lambda$ and $\mu \not= \theta$. For all $\eta < \mu$, find $\beta_\eta < \lambda$ and $i_\eta < \theta$ such that $f_{\beta_\eta}(i_\eta) \geq \gamma_\eta$. Let $\beta = \sup(\{\beta_\eta + 1 \mid \eta < \mu\})$. Since $\mu < \lambda$, we have $\beta < \lambda$. For each $\eta < \mu$, find $j_\eta < \theta$ such that, for all $j_\eta \leq j < \theta$, $f_{\beta_\eta}(j) < f_\beta(j)$.

	First, suppose $\mu < \theta$. Let $j^* = \sup(\{i_\eta, j_\eta \mid \eta < \mu\})$. Since $\mu < \theta$, we have $j^* < \theta$. Also, for all $\eta < \mu$, $f_\beta(j^*) \geq \gamma_\eta$, so $f_\beta(j^*) \geq \gamma$. Contradiction.

	Next, suppose $\theta < \mu < \lambda$. Fix an unbounded $A \subseteq \mu$ and $i^*, j^* < \theta$ such that, for all $\eta \in A$, $i_\eta = i^*$ and $j_\eta = j^*$. Let $k = \max(i^*, j^*)$. Then, for all $\eta \in A$, $f_\beta(k) \geq \gamma_\eta$, so $f_\beta(k) \geq \gamma$, which is again a contradiction.
\end{proof}

\begin{corollary}
	Suppose $\theta < \lambda$ are infinite, regular cardinals and $\mathcal{D} = \langle D(i, \beta) \mid i < \theta, \beta < \lambda \rangle$ is a transitive, uniform, normal covering matrix. Then $\cf(\beta_{\mathcal{D}}) = \theta$.
\end{corollary}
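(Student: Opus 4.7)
The plan is to invoke the remark and proposition immediately preceding the corollary, which do essentially all the work. First, I would recall that since $\mathcal{D}$ is a transitive, uniform $\theta$-covering matrix for $\lambda$, the remark following the definition of club-increasing sequences tells us that the sequence $\vec{f} = \langle f_\beta \mid \beta < \lambda \rangle$ defined by $f_\beta(i) = \mathrm{otp}(D(i,\beta))$ is a club-increasing sequence of functions from ${^\theta}\mathrm{On}$, and moreover $\gamma_{\vec{f}} = \beta_{\mathcal{D}}$.

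Then I would apply Proposition \ref{upperbdprop} to this $\vec{f}$, concluding that $\cf(\beta_{\mathcal{D}}) = \cf(\gamma_{\vec{f}})$ is either $\theta$ or $\lambda$. The final step is to use the normality hypothesis: by definition of normality, $\beta_{\mathcal{D}} < \lambda$, and since $\lambda$ is regular, $\cf(\beta_{\mathcal{D}}) \leq \beta_{\mathcal{D}} < \lambda$, which rules out the possibility $\cf(\beta_{\mathcal{D}}) = \lambda$. Therefore $\cf(\beta_{\mathcal{D}}) = \theta$.

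There is no serious obstacle here; the corollary is a direct bookkeeping consequence of the preceding proposition once one verifies the sequence of order types really does fit the definition of a club-increasing sequence (non-decreasing in $i$ because $D(i,\beta) \subseteq D(j,\beta)$ for $i < j$, strictly $<^*$-increasing because transitivity plus the covering property (3) forces the order types to strictly increase tailwise, and the club-increasing clause for cofinality $>\theta$ witnessed by the club from uniformity inside $D(i,\beta)$ for a suitable $i$). Since the remark already asserts this, the corollary reduces to a one-line cardinal arithmetic observation.
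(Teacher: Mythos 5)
Your proof is correct and follows exactly the route the paper intends: the remark preceding Proposition \ref{upperbdprop} supplies the club-increasing sequence $\vec{f}$ with $\gamma_{\vec{f}} = \beta_{\mathcal{D}}$, the proposition gives $\cf(\beta_{\mathcal{D}}) \in \{\theta, \lambda\}$, and normality together with the regularity of $\lambda$ rules out $\lambda$. Nothing further is needed.
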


\begin{theorem} \label{goodptthm}
	Suppose $\theta < \lambda$ are infinite, regular cardinals, $\theta^+ < \lambda$, $R(\lambda, \theta)$ holds, and $\vec{f} = \langle f_\alpha \mid \alpha < \lambda \rangle$ is a club-increasing sequence from ${^\theta} \mathrm{On}$. Then $\vec{f}$ falls into the `Good' case of the Trichotomy Theorem.
\end{theorem}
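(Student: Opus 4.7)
The plan is to rule out the Bad and Ugly cases of the Trichotomy Theorem by producing, under $R(\lambda, \theta)$, stationarily many $\beta \in S^\lambda_{>\theta}$ that are PCF-good points for $\vec{f}$, after which Theorem \ref{statGoodThm} yields the Good case. Recall that $\beta$ is such a good point if there exist an unbounded $A \subseteq \beta$ and some $i^{**} < \theta$ with $f_{\alpha_0}(j) < f_{\alpha_1}(j)$ for all $\alpha_0 < \alpha_1$ in $A$ and all $j \geq i^{**}$; given such an $A$, the function $g_\beta$ defined by $g_\beta(j) = \sup_{\alpha \in A} f_\alpha(j)$ for $j \geq i^{**}$ and $g_\beta(j) = 0$ otherwise is readily verified to be an eub for $\langle f_\alpha \mid \alpha < \beta\rangle$ with $\cf(g_\beta(j)) = \cf(\beta) > \theta$ for $j \geq i^{**}$.

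To set up the reflection, I would let $S$ be the stationary witness provided by $R(\lambda, \theta)$ and, with some standard maneuvering (restricting to cofinality $\theta$ or otherwise arranging that the $\theta$-many stationary subsets we ultimately submit are concentrated on $S^\lambda_\theta$), ensure that any simultaneous reflection point of these subsets lies in $S^\lambda_{>\theta}$. Invoking the club-increasing hypothesis at each $\beta \in S \cap S^\lambda_{>\theta}$ gives a club $C_\beta \subseteq \beta$ and an $i_\beta < \theta$ with $f_\alpha <_{i_\beta} f_\beta$ for all $\alpha \in C_\beta$; pigeonholing on $i_\beta$ thins the set to a stationary $S_0$ on which $i_\beta = i^*$ is a fixed value.

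The core of the argument is the following transitivity observation: if $\alpha_0 < \gamma < \alpha_1$ all lie in $S_0$ with $\alpha_0 \in C_\gamma$ and $\gamma \in C_{\alpha_1}$, then $f_{\alpha_0}(j) < f_\gamma(j) < f_{\alpha_1}(j)$ for all $j \geq i^*$. Writing $\alpha \prec \alpha'$ when $\alpha, \alpha' \in S_0$ and $\alpha \in C_{\alpha'}$, it therefore suffices to locate a reflection point $\beta^* \in S^\lambda_{>\theta}$ admitting an unbounded $A \subseteq \beta^*$ whose consecutive elements are linked by short $\prec$-chains through $S_0$; any such $A$ witnesses that $\beta^*$ is PCF-good with threshold $i^*$. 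I would construct $\theta$ stationary subsets $\langle T_k \mid k < \theta\rangle$ of $S_0$ encoding coordinatewise information about $\vec{f}$ (for instance, splitting $S_0$ according to where the values $f_\beta(i^* + k)$ fall relative to a preselected cofinal sequence through $\gamma_{\vec{f}}$) so that simultaneous reflection of $\{T_k \mid k < \theta\}$ at $\beta^*$, combined with $\cf(\beta^*) > \theta$, forces $C_{\beta^*} \cap S_0$ to be rich enough for a cofinal $\prec$-chain to be constructed by transfinite recursion. Since the construction can be relativised above any prescribed club of $\lambda$, the resulting set of good points is stationary, and Theorem \ref{statGoodThm} then completes the argument.

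The main obstacle will be the precise design of the partition $\langle T_k \mid k < \theta\rangle$ and the verification that simultaneous reflection yields the required $\prec$-chain, since the club-increasing property compares each $\alpha \in C_\beta$ only with $\beta$ and not with other elements of $C_\beta$; all of the pointwise coherence needed among the clubs $\{C_\alpha : \alpha \in C_{\beta^*}\}$ must be extracted from the reflection itself. This is the covering-matrix analogue of the Shelah and Foreman--Magidor threads linking stationary reflection at $\lambda$ to the abundance of good points in PCF scales, and the technical content of this theorem lies essentially in carrying out this extraction.
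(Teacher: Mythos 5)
There is a genuine gap, and you have in fact flagged it yourself: the entire weight of your argument rests on producing, at a simultaneous reflection point $\beta^*$, an unbounded $A \subseteq \beta^*$ along which the $f_\alpha$ are \emph{pointwise} increasing above a fixed coordinate, and you leave the construction of the partition $\langle T_k \mid k < \theta\rangle$ and the extraction of the $\prec$-chain as an acknowledged ``obstacle.'' This is not a routine verification. The relation $\alpha \prec \alpha'$ (meaning $\alpha \in C_{\alpha'}$ with $i_{\alpha'} = i^*$) is the only source of pointwise comparisons, and at a successor step of your recursion you would need, given $\alpha_n$, some $\alpha_{n+1} \in S_0 \cap \beta^*$ above $\alpha_n$ with $\alpha_n \in C_{\alpha_{n+1}}$; nothing in the hypotheses guarantees that the set $\{\alpha' \in S_0 \mid \alpha_n \in C_{\alpha'}\}$ is nonempty, and $R(\lambda,\theta)$ only lets you reflect $\theta$-many stationary sets \emph{fixed in advance}, whereas the sets you would need to reflect depend on the chain being built. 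Moreover, the route you propose is essentially backwards relative to how such goodness results are usually obtained: the standard (and the paper's) way to produce good points from reflection is precisely to apply Trichotomy and refute the Bad and Ugly alternatives, so using ``stationarily many good points'' as an intermediate step toward the Good case puts the hard content exactly where you have not supplied it. (A smaller issue: the eub $g_\beta$ you write down has $g_\beta(j)=0$ for $j < i^{**}$, so it does not literally satisfy the hypothesis $\cf(g(i)) > \theta$ for \emph{all} $i$ required by Theorem \ref{statGoodThm}; this is fixable by overwriting the initial segment, but it should be said.)

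For comparison, the paper's proof never constructs good points at all. It assumes the Bad (respectively Ugly) case holds, uses the witnessing data to define at most $\theta$-many stationary subsets of the reflecting set $T$ --- in the Bad case the sets $T_{i,\eta} = \{\alpha \in T \mid f_\alpha(i) \geq \eta\}$ for suitable $(i,\eta)$, in the Ugly case the sets $T_i = \{\alpha \in T \mid f_\alpha(i) \geq h(i)\}$ --- reflects them simultaneously at a single $\beta$, and then uses the club-increasing property at $\beta$ alone (one club $C_\beta$, one index $i_\beta$) to force lower bounds on $f_\beta$ that contradict the cofinal interleaving (Bad) or the non-stabilization (Ugly). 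No coherence among the clubs $C_\alpha$ for $\alpha < \beta$ is ever needed, which is exactly the difficulty your approach runs into. I would encourage you to redo the proof along these lines.
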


\begin{proof}
	Let $T \subseteq \lambda$ be a stationary set witnessing $R(\lambda, \theta)$. Suppose first that $\vec{f}$ falls into the `Bad' case, and let this be witnessed by an ultrafilter $U$ and a sequence of sets $\langle S_i \mid i < \theta \rangle$. For all $i < \theta$ and $\eta \in S_i$, let $T_{i,\eta} = \{\alpha \in T \mid f_\alpha(i) \geq \eta\}$. Define a function $h \in {^\theta}\mathrm{On}$ as follows. For each $i < \theta$, if there is $\eta \in S_i$ such that $T_{i, \eta}$ is non-stationary, let $h(i)$ be the least such $\eta$. If there is no such $\eta$, let $h(i) = \sup(S_i)$.

  Let $B = \{(i, \eta) \mid i < \theta, \eta \in S_i$, and $T_{i, \eta}$ is stationary$\}$. $|B| \leq \theta$, so, by $R(\lambda, \theta)$, there is $\beta < \lambda$ such that $T_{i, \eta}$ reflects at $\beta$ for all $(i, \eta) \in B$. Fix a club $D \subseteq \beta$ and an $i^* < \theta$ witnessing that $\vec{f}$ is club-increasing at $\beta$. For all $(i, \eta) \in B$, there is $\alpha_{i, \eta} \in D \cap T_{i, \eta}$. Thus, if, in addition, $i \geq i^*$, we have $\eta \leq f_{\alpha_{i, \eta}}(i) < f_\beta(i)$.
  
  Let $X = \{i < \theta \mid h(i) < \sup(S_i)\}$, and suppose first that $X \not\in U$. Then $\theta \setminus (X \cup i^*) \in U$ and, for all $i \in \theta \setminus (X \cup i^*)$, $f_\beta(i) \geq \sup(S_i)$, contradicting the fact that $\vec{f}$ is cofinally interleaved with a subfamily of $\prod_{i < \theta} S_i$ modulo $U$.  

  Therefore, we may assume that $X \in U$. For all $i^* \leq i < \theta$ and all $\eta \in S_i \cap h(i)$, $f_\beta(i) \geq \eta$. Find $g_\beta \in \prod_{i < \theta} S_i$ and $\gamma$ with $\beta < \gamma < \lambda$ such that $f_\beta <_U g_\beta <_U f_\gamma$. Fix $Z \subseteq (X \setminus i^*)$ such that $Z \in U$ and, for all $i \in Z$, $f_\beta(i) < g_\beta(i) < f_\gamma(i)$. Then, for all $i \in Z$, we must have $h(i) \leq g_\beta(i) < f_\gamma(i)$. Find a stationary $T^* \subseteq T \setminus (\gamma + 1)$ and a fixed $\ell \in Z$ such that, for all $\delta \in T^*$, 
  $f_\gamma(\ell) < f_\delta(\ell)$. Then $T^* \subseteq T_{\ell, h(\ell)}$, so $T_{\ell, h(\ell)}$ is stationary, contradicting the definition of $h(\ell)$.

  Next, suppose that $\vec{f}$ falls into the `Ugly' case, as witnessed by $h \in {^\theta}\mathrm{On}$. For all $i < \theta$, let $T_i = \{\alpha \in T \mid f_\alpha(i) \geq h(i)\}$. Let $A = \{i < \theta \mid T_i$ is stationary$\}$. Let $B = \theta \setminus A$.

	\begin{claim}
		Suppose $\alpha < \lambda$. Then $|\{i \in B \mid f_\alpha(i) \geq h(i)\}| < \theta$.
	\end{claim}

	\begin{proof}
		Fix $\alpha < \lambda$. For all $i \in B$, let $C_i \subseteq \lambda$ be club such that $C_i \cap T_i = \emptyset$. Let $C = \bigcap_{i \in B} C_i$. Fix $\beta \in (C \cap T) \setminus \alpha$. Then, for all $i \in B$, $f_\beta(i) < h(i)$. Since $f_\alpha \leq^* f_\beta$, the claim follows.
	\end{proof}

	By $R(\lambda, \theta)$, find $\beta < \lambda$ such that $T_i$ reflects at $\beta$ for all $i \in A$. Fix a club $D \subseteq \beta$ and an $i^* < \theta$ witnessing that $\vec{f}$ is club-increasing at $\beta$. For all $i \in A \setminus i^*$, fix $\alpha_i \in T_i \cap D$ and note that $h(i) \leq f_{\alpha_i}(i) < f_\beta(i)$. Combining this with the previous claim, we obtain that, for all $\delta$ with $\beta \leq \delta < \lambda$, $\{i < \theta \mid f_\delta(i) < h(i)\} =^* B$, which contradicts the fact that $\vec{h}$ witnesses that we are in the `Ugly' case.
\end{proof}

This result yields the following corollary. The corollary is not new, as it also follows from the results of Sharon and Viale in \cite{sharonviale}. The proof in \cite{sharonviale} relies on a detailed analysis of covering matrices while ours is more directly PCF-theoretic in nature, but the proofs are rather similar in flavor, and a comparison thereof again illustrates the connections between covering matrices and PCF-theoretic sequences of functions. Recall that, if $\mu$ is a singular cardinal and $\langle \kappa_i \mid i < \cf(\mu) \rangle$ is an increasing sequence of regular cardinals cofinal in $\mu$, a \emph{scale of length $\mu^+$ in $\prod_{i < \cf(\mu)} \kappa_i$} is a $<^*$-increasing, $<^*$-cofinal sequence $\vec{f} = \langle f_\alpha \mid \alpha < \mu^+ \rangle$ of functions in $\prod_{i < \cf(\mu)} \kappa_i$. If $\beta \in S^{\mu^+}_{>\cf(\mu)}$, then $\beta$ is a \emph{good point} for $\vec{f}$ if $\langle f_\alpha \mid \alpha < \beta \rangle$ has an eub $g$ such that $\cf(g(i)) = \cf(\beta)$ for all $i < \cf(\mu)$, and $\beta$ is a \emph{bad point} otherwise.

\begin{corollary}
	Suppose $B \subseteq \omega$ is infinite, $\vec{f} = \langle f_\alpha \mid \alpha < \aleph_{\omega + 1} \rangle$ is a scale in $\prod_{n \in B} \aleph_n$, and $R(\aleph_2, \aleph_0)$ holds. Then $\{\beta \in S^{\aleph_{\omega + 1}}_{\aleph_2} \mid \beta$ is a bad point for $\vec{f} \}$ is non-stationary.
\end{corollary}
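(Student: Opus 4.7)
The plan is to suppose, for contradiction, that the set $S := \{\beta \in S^{\aleph_{\omega+1}}_{\aleph_2} \mid \beta$ is a bad point for $\vec{f}\}$ is stationary; fix some $\beta \in S$ and construct from $\vec{f} \restriction \beta$ a club-increasing sequence $\vec{g} = \langle g_\xi \mid \xi < \aleph_2 \rangle$ of functions in ${^\omega}\mathrm{On}$ to which Theorem \ref{goodptthm} applies (with $\lambda = \aleph_2$ and $\theta = \omega$, noting $\omega^+ = \aleph_1 < \aleph_2$). Since $R(\aleph_2, \aleph_0)$ is assumed, Theorem \ref{goodptthm} yields an eub $g$ for $\vec{g}$ with $\cf(g(k)) > \omega$ for every $k < \omega$; because $\vec{g}$ is cofinally interleaved with $\vec{f} \restriction \beta$ modulo bounded sets, $g$ is also an eub for $\vec{f} \restriction \beta$. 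The concluding step shows that $\cf(g(k)) = \aleph_2$ for cofinitely many $k$, so that a finite modification of $g$ witnesses the goodness of $\beta$, contradicting $\beta \in S$.

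For the construction of $\vec{g}$, enumerate $B$ in increasing order as $\langle n_k \mid k < \omega \rangle$ and recursively pick a cofinal sequence $\langle \beta_\xi \mid \xi < \aleph_2 \rangle$ in $\beta$, putting $g_\xi(k) = f_{\beta_\xi}(n_k)$. The sequence $\vec{g}$ is then automatically $<^*$-increasing of length $\aleph_2$ and cofinally interleaved with $\vec{f} \restriction \beta$. The delicate requirement is the club-increasing condition at each $\xi < \aleph_2$ with $\cf(\xi) = \omega_1$: for such $\xi$ I need a club $C_\xi \subseteq \xi$ and $i_\xi < \omega$ with $g_\eta(k) < g_\xi(k)$ whenever $\eta \in C_\xi$ and $k \geq i_\xi$. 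The map $\eta \mapsto \min\{i : g_\eta <_i g_\xi\}$ sends $\xi$ into $\omega$, so by pigeonhole it is constantly some $i_\xi$ on an unbounded subset of $\xi$; extending this unbounded set to a club uniformly dominated below $g_\xi$ past coordinate $i_\xi$ is the technical heart. A natural way is to choose $\beta_\xi$ so that $\langle f_\alpha \mid \alpha < \beta_\xi \rangle$ admits an eub of uniform coordinate-cofinality $\omega_1$ bounded by $f_{\beta_\xi}$ on cofinitely many coordinates, so that limit stages below $\xi$ are absorbed by continuity.

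Granting this, Theorem \ref{goodptthm} produces the eub $g$, and since $\vec{g}$ has length $\aleph_2$ each $\cf(g(k)) \leq \aleph_2$, so $\cf(g(k)) \in \{\aleph_1, \aleph_2\}$. To upgrade to $\cf(g(k)) = \aleph_2$ cofinitely: if there were cofinally many $k$ with $\cf(g(k)) = \aleph_1$, then fixing, for each such $k$, a cofinal sequence in $g(k)$ of order type $\omega_1$, one would assemble from these sequences a family of functions of size $\aleph_1$ that is cofinally interleaved with $\vec{g}$ on the coordinates in question, placing $\vec{g}$ into the Bad case of the Trichotomy Theorem and contradicting Theorem \ref{goodptthm}. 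The main obstacle I foresee is the verification of the club-increasing property in the second paragraph: arranging that the monochromatic unbounded subset of $\xi$ extends to a club requires a genuinely careful choice of $\beta_\xi$, and may call on deeper PCF-theoretic facts about scales in $\prod_n \aleph_n$, such as the abundance of good points of cofinality $\omega_1$ below $\beta$.
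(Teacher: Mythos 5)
Your overall strategy coincides with the paper's: reduce to Theorem \ref{goodptthm} applied to a club-increasing sequence of length $\omega_2$, then upgrade the resulting eub to have coordinatewise cofinality $\omega_2$ on all but finitely many coordinates (your final step, via the Bad case or directly as in Proposition \ref{upperbdprop}, is fine). However, there is a genuine gap exactly where you flag it: you never establish the club-increasing property of your sequence $\langle g_\xi \mid \xi < \omega_2 \rangle$ at indices $\xi$ of cofinality $\omega_1$, and the repair you propose does not work in ZFC. If $\delta = \sup_{\eta < \xi} \beta_\eta$ happens to be a bad point of $\vec{f}$ of cofinality $\omega_1$, then no choice of $\beta_\xi \geq \delta$ can produce a club $C \subseteq \xi$ and a fixed $i$ with $f_{\beta_\eta} <_i f_{\beta_\xi}$ for all $\eta \in C$, since that would make $\delta$ good. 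Steering the sups $\delta$ into good points of cofinality $\omega_1$ by a catch-up argument would require those good points to contain a club relative to $S^\beta_{\omega_1}$, but Shelah's theorem only guarantees that they are stationary; it is consistent that the bad points of cofinality $\omega_1$ of a scale on $\aleph_\omega$ are themselves stationary, so ``abundance of good points'' cannot close the gap.

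The missing idea, which is how the paper proceeds, is that one should not hunt for good points at all but manufacture the club-increasing property by modifying the scale globally: replace $\vec{f}$ by a scale $\vec{g}$ in $\prod_{n \in B}\aleph_n$, cofinally interleaved with $\vec{f}$ on a club, in which each $g_\alpha$ at a point $\alpha$ of uncountable cofinality is obtained by taking pointwise suprema (plus one) over a fixed club in $\alpha$, on the cofinitely many coordinates $n \in B$ with $\aleph_n > \cf(\alpha)$. This forces the club-increasing property by fiat, independently of whether $\alpha$ is good for $\vec{f}$. Then, for any $\beta \in S^{\aleph_{\omega+1}}_{\aleph_2}$, restricting $\vec{g}$ along a continuous cofinal sequence $\langle \alpha_\eta \mid \eta < \omega_2 \rangle$ automatically yields a club-increasing sequence of length $\omega_2$ (continuity sends indices of cofinality $\omega_1$ to points of cofinality $\omega_1$, where the club for $\vec{g}$ pulls back to a club in the index set), and Theorem \ref{goodptthm} applies. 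Goodness of $\beta$ for $\vec{g}$ transfers to $\vec{f}$ via the interleaving, giving non-stationarity of the bad set rather than goodness of every point of cofinality $\aleph_2$ for $\vec{f}$ itself. With this replacement your argument goes through; without it, the construction of the club-increasing sequence is not merely ``technical'' but is the substantive content of the proof.
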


\begin{proof}
	Using $\vec{f}$, it is routine to construct a scale $\vec{g} \in \prod_{n \in B} \aleph_n$ that is club-increasing. $\vec{g}$ and $\vec{f}$ are cofinally interleaved on a club in $\aleph_{\omega + 1}$, so it suffices to show that all points of cofinality $\aleph_2$ are good for $\vec{g}$. To this end, let $\beta \in S^{\aleph_{\omega + 1}}_{\aleph_2}$, let $\langle \alpha_\eta \mid \eta < \omega_2 \rangle$ be increasing, continuous, and cofinal in $\beta$, and consider $\vec{g}^\beta = \langle g_{\alpha_\eta} \mid \eta < \omega_2 \rangle$. $\vec{g}^\beta$ is club-increasing, so, by Theorem \ref{goodptthm}, $\vec{g}^\beta$ has an exact upper bound $h$ such that $\cf(h(n)) > \omega$ for all $n \in B$. A simple argument, similar to the proof of Proposition \ref{upperbdprop}, yields that $\cf(h(n)) = \omega_2$ for all but finitely many $n \in B$. Thus, $\beta$ is a good point for $\vec{g}$.
\end{proof}

\bibliography{covering_2}
\bibliographystyle{amsplain}
\end{document}